\newtheorem{theorem}{Theorem}[section]
\newtheorem{prop}[theorem]{Proposition}
\newtheorem{corollary}[theorem]{Corollary}
\newtheorem{definition}[theorem]{Definition}
\newtheorem{example}[theorem]{Example}
\newtheorem{lemma}[theorem]{Lemma}
\newtheorem{remark}[theorem]{Remark}
\newtheorem{notation}[theorem]{Notations}
\numberwithin{equation}{section}
\newcommand{\real}{\mathbb{R}}
\newcommand{\comp}{\mathbb{C}}
\newcommand{\inte}{\mathbb{Z}}
\newcommand{\pdb}{\bar{\partial}}
\newcommand{\dd}[1]{\frac{\partial}{\partial #1}}
\newcommand{\half}{\frac{1}{2}}
\newcommand{\ddd}[2]{\frac{\partial#1}{\partial{#2}}}
\newcommand{\pdpd}[3]{\frac{\partial^2 #1}{\partial #2\partial #3}}
\newcommand{\bmc}{w}
\newcommand{\facs}{\varphi}
\newcommand{\incoming}{\Pi}
\newcommand{\tree}[1]{\mathbb{T}_{#1}}
\newcommand{\tr}{T}
\newcommand{\wptr}{\Gamma}
\newcommand{\wptree}[2]{\mathtt{W}\mathtt{P}\mathtt{T}_{#1,#2}}
\newcommand{\wrtr}{\mathcal{T}}
\newcommand{\wrtree}[2]{\mathtt{W}\mathtt{R}\mathtt{T}_{#1,#2}}
\newcommand{\hp}{\hslash}  
\newcommand{\bvd}{\Delta}
\begin{document}

\title[Tropical counting from Maurer-Cartan equations]{Tropical counting from asymptotic analysis\\ on Maurer-Cartan equations}
\author[Chan]{Kwokwai Chan}
\address{Department of Mathematics\\ The Chinese University of Hong Kong\\ Shatin\\ Hong Kong}
\email{kwchan@math.cuhk.edu.hk}

\author[Ma]{Ziming Nikolas Ma}
\address{The Institute of Mathematical Sciences and Department of Mathematics\\ The Chinese University of Hong Kong\\ Shatin\\ Hong Kong}
\email{zmma@ims.cuhk.edu.hk}

\begin{abstract}
Let $X = X_\Sigma$ be a toric surface and $(\check{X}, W)$ be its Landau-Ginzburg (LG) mirror where $W$ is the Hori-Vafa potential \cite{Hori-Vafa00}. We apply asymptotic analysis to study the extended deformation theory of the LG model $(\check{X}, W)$, and prove that semi-classical limits of Fourier modes of a specific class of Maurer-Cartan solutions naturally give rise to tropical disks in $X$ with Maslov index 0 or 2, the latter of which produces a universal unfolding of $W$. For $X = \mathbb{P}^2$, our construction reproduces Gross' perturbed potential $W_n$ \cite{Gross10} which was proven to be the universal unfolding of $W$ written in canonical coordinates. We also explain how the extended deformation theory can be used to reinterpret the jumping phenomenon of $W_n$ across walls of the scattering diagram formed by Maslov index 0 tropical disks originally observed by Gross \cite{Gross10} (in the case of $X = \mathbb{P}^2$).
\end{abstract}

\maketitle

\section{Introduction}

\subsection{Background}

The study of mirror symmetry for toric varieties goes back to Batyrev \cite{Batyrev93}, Givental \cite{Givental95, Givental96, Givental98}, Lian-Liu-Yau \cite{LLY-III}, Kontsevich \cite{Kontsevich-ENS98} and Hori-Vafa \cite{Hori-Vafa00}. Unlike the Calabi-Yau case, the mirror of a compact toric manifold $X$ is given by a {\em Landau-Ginzburg (abbrev. LG) model} $(\check{X}, W)$ consisting of a noncompact K\"ahler manifold $\check{X}$ and a holomorphic function $W:\check{X} \rightarrow \comp$ called the {\em potential} \cite{Vafa91, Witten93}. As a prototypical example, the LG mirror of $X = \mathbb{P}^2$ is given by $\check{X} = \{(z^0,z^1,z^2) \in \comp^3 \mid z^0 z^1 z^2 = 1 \} $ together with the restriction of $W = z^0 + z^1 +z^2$ to $\check{X} \subset \comp^3$.

At the genus 0 level, mirror symmetry can be understood as an isomorphism between Frobenius manifolds.
For a large class of examples, the construction of the {\em B-model Frobenius manifold} from the LG model $(\check{X}, W)$ was carried out by Douai-Sabbah \cite{douai2003gauss, douai2004gauss} (see also the book \cite{sabbah2007isomonodromic}), generalizing the classic work of K. Saito \cite{Saito83}. Mirror symmetry then says that the B-model Frobenius manifold of $(\check{X}, W)$ is isomorphic (via a possibly nontrivial mirror map) to the {\em A-model Frobenius manifold} constructed from the genus 0 Gromov-Witten (abbrev. GW) theory or {\em big} quantum cohomology of $X$. In the case of projective spaces this was proved by Barannikov \cite{barannikov2000semi}.

The geometry of this mirror symmetry can be understood using the Strominger-Yau-Zaslow (abbrev. SYZ) conjecture \cite{syz96}. Namely, a natural Lagrangian torus fibration is given by the moment map $p: X \to \mathbf{P}$, and the mirror manifold $\check{X}$ can be constructed geometrically as the moduli space of A-branes $(L, \nabla)$ consisting of a Lagrangian torus fiber $L$ of $\rho$ and a flat $U(1)$-connection $\nabla$ over it, or simply, as the total space of the fiberwise dual of $p$ restricted to the interior $\text{Int}(\mathbf{P}) \subset \mathbf{P}$ \cite{Auroux07}. The SYZ conjecture also suggests that mirror symmetry is a geometric Fourier transform; in this regard, the construction of $\check{X}$ from $X$ may be viewed ``$0$-th Fourier mode'' of the mirror geometry.

The ``higher Fourier modes'' or ``quantum corrections'' come from the singular or degenerated fibers of $p$ over the boundary $\partial\mathbf{P}$ and are captured by holomorphic disks in $X$ with boundary on a Lagrangian torus fiber of $p$ -- this gives rise to the mirror LG potential $W$.
Cho-Oh \cite{Cho-Oh06} were the first to prove, in the toric Fano case, that the so-called {\em Hori-Vafa potential} $W$ \cite{Hori-Vafa00} can be expressed in terms of counts of Maslov index 2 holomorphic disks.
This was later generalized by Fukaya-Oh-Ohta-Ono \cite{FOOO-toricI} to all compact toric manifolds.
They defined the {\em Lagrangian Floer potential} $W^{\text{LF}}$ which is determined by the {\em obstruction cochain} $\mathfrak{m}_0$ in the Floer complex of a Lagrangian torus fiber of the moment map $p$ \cite{FOOO_I, FOOO_II}.

In more explicit terms, coefficients of $W^{\text{LF}}$ are virtual counts of Maslov index 2 {\em stable} disks, or more precisely, {\em genus 0 open Gromov-Witten invariants}, and $W^{\text{LF}}$ is a perturbation of the Hori-Vafa potential $W$ of the form
$$W^{\text{LF}} = W + \text{correction terms}$$
because coefficients of $W$ only encode counts of {\em embedded} disks (which is why $W^{\text{LF}} = W$ only when $X$ is toric Fano).
In general it is very hard to compute $W^{\text{LF}}$, but explicit formulas are known in a few low-dimensional examples \cite{Auroux09, FOOO12, Chan-Lau10} and when $X$ is semi-Fano \cite{CLLT11, CLLT12, Gonzalez-Iritani17}.

Using $W^{\text{LF}}$, one obtains an isomorphism of Frobenius algebras
\begin{equation}\label{eqn:isom_QH_Jac_small}
QH^*(X) \cong Jac(W^{\text{LF}})
\end{equation}
between the {\em small} quantum cohomology ring of $X$ and the Jacobian ring of $W$, {\em without} going through a mirror map (or one can say that the mirror map is trivialized).
To upgrade this to an isomorphism between Frobenius manifolds, Fukaya-Oh-Ohta-Ono \cite{FOOO-toricII} introduced the {\em bulk-deformed potential} $W^{\text{LF}}_b$ as a perturbation of $W^{\text{LF}}$ by the ambient cycles in $X$. In \cite{FOOO-toricIII} they proved that \eqref{eqn:isom_QH_Jac_small} can be enhanced to an isomorphism between the A-model Frobenius manifold of $X$ and the B-model Frobenius manifold constructed from $W^{\text{LF}}_b$.

Not long afterward, Gross \cite{Gross10, gross2011tropical} constructed a very explicit perturbation of the Hori-Vafa potential $W = z^0 + z^1 +z^2$ mirror to $X = \mathbb{P}^2$ using counts of Maslov index 2 {\em tropical disks} in $\real^2$ (or the tropical projective plane $\mathbb{TP}^2$). He computed oscillatory integrals of his perturbed potential, producing beautiful tropical formulas for descendent GW invariants and proving that it is the universal unfolding of $W$ written in canonical coordinates, thereby giving a very transparent proof of mirror symmetry for $\mathbb{P}^2$ via tropical geometry.


Gross' work is closely connected with the influential Gross-Siebert program \cite{Gross-Siebert03, Gross-Siebert-logI, Gross-Siebert-logII, gross2011real}, where a key role is played by a combinatorial gadget called {\em scattering diagram} which was first introduced by Kontsevich-Soibelman in \cite{kontsevich-soibelman04}.

On the other hand, the precise correspondence between counting of tropical and holomorphic curves has been studied in various cases, first by Mikhalkin \cite{Mikhalkin05} in dimension $2$, and later by Nishinou-Siebert \cite{Nishinou-Siebert06} in higher dimensions. The correspondence between tropical and holomorphic disks in toric varieties was first investigated by Nishinou \cite{Nishinou12}, and more recently, clarified and refined by Hong-Lin-Zhao \cite{hong2018bulk} (in the case of toric surfaces). These works indicate that tropical geometry is indeed sufficient in describing GW theory.

\subsection{Asymptotic behavior of Maurer-Cartan solutions}

The main goal of this paper is to explain how extended deformation theory of the LG model $(\check{X}, W)$ can lead us naturally to Gross' perturbed potential constructed in \cite{Gross10}. Our main tool is asymptotic analysis on Maurer-Cartan equations and we will build on the approach developed in \cite{kwchan-leung-ma}. In a broader sense, our results are about relations between tropical disk counting on (the tropical counterpart of) a toric surface $X$ and the extended deformation theory of its LG mirror $(\check{X}, W)$, where $W$ is taken to be the Hori-Vafa potential \cite{Hori-Vafa00}.

Recall that in \cite{kwchan-leung-ma}, we considered the differential-geometric deformation theory of $\check{X}$ governed by the Kodaira-Spencer dgLa
$KS^*_{\check{X}} : = \Omega^{0,*}(\check{X},T^{1,0})$
and the associated Maurer-Cartan (abbrev. MC) equation
\begin{equation}\label{eqn:original_MC_equation}
\bar{\partial} \varphi + \half [\varphi, \varphi] = 0.
\end{equation}
An $\hp \in \real_+$ parameter was introduced there to twist the complex structure of $\check{X}$ which geometrically corresponds to shrinking of the torus fibers in $X$.
Following a proposal put forward by Kontsevich-Soibelman \cite{Kontsevich-Soibelman01} and Fukaya \cite{fukaya05}, we studied Fourier expansions of a specific class of solutions of the MC equation \eqref{eqn:original_MC_equation} along fibers of $\check{p}: \check{X} \rightarrow \text{Int}(\mathbf{P})$.
The main results in \cite{kwchan-leung-ma} showed that leading order terms (or semiclassical limits) of the Fourier modes of such a solution naturally give rise to a consistent scattering diagram $\mathscr{D}$ as $\hp \rightarrow 0$, and conversely such solutions can be constructed as sums over trees of terms with support concentrated along the walls in $\mathscr{D}$.

In this paper, we consider the extended Kodaira-Spencer complex given by {\em polyvector fields}:
$$PV^{*,*}(\check{X}) := \Omega^{0,*}\left(\check{X},\wedge^* T^{1,0}\right).$$
This is equipped with the Dolbeault differential $\bar{\partial}$, a naturally extended Lie-bracket $[\cdot,\cdot]$ and, as $\check{X}$ is Calabi-Yau, a BV operator $\Delta$, constituting {\em a differential graded Batalin-Vilkovisky (abbrev. dgBV) algebra}.
This structure is the key ingredient in the construction of the B-model Frobenius manifold in the Calabi-Yau setting \cite{Barannikov-Kontsevich98, barannikov2000semi, li2013primitive}.

For a LG model $(\check{X},W)$, the Dolbeault differential in the dgBV algebra $PV^{*,*}$ should be replaced by the {\em twisted Dolbeault differential}
$\bar{\partial}_W := \bar{\partial} + [W, \cdot],$
and it is natural to consider the associated {\em extended} Maurer-Cartan equation:
\begin{equation}\label{eqn:extended_MC_equation_intro}
\bar{\partial}_W \varphi + \half [\varphi, \varphi] = 0
\end{equation}
for $\varphi \in PV^{*,*}(\check{X})$ (see Section \ref{sec:BV_algebra}). We adapt this approach with a view towards a construction of the higher genus B-model \cite{li2011bcov, costello2012quantum}.

Except in Sections \ref{sec:asymptotic_support} and \ref{sec:homotopy_operator}, our attention will be restricted to the 2-dimensional case, so $X$ will just be a toric surface (implicitly equipped with the toric anticanonical divisor $D = D_\infty$). To analyze the equation \eqref{eqn:extended_MC_equation_intro} associated to the mirror LG model $(\check{X} \cong (\comp^*)^2, W)$, where $W$ is the Hori-Vafa potential, we apply the machinery developed in \cite{kwchan-leung-ma}. More precisely, as we will only be concerned with the leading order behavior of the MC solutions as $\hp \rightarrow 0$, the complex $PV^{*,*}$ will be replaced by the quotient $(\mathcal{G}/\mathcal{I})^{*,*}$ of a subalgebra $\mathcal{G}^{*,*} \leq PV^{*,*}$ consisting of terms with growth control as $\hp \rightarrow 0$ by the ideal $\mathcal{I}^{*,*}$ generated by error terms in $\hp$ (see Definition \ref{def:finite_Lie_alg}). We shall also employ the important notion of {\em asymptotic support} first introduced in \cite{kwchan-leung-ma} (see Definition \ref{def:asypmtotic_support_pre}). 

In the 2-dimensional case, it is natural to consider deformations using $n$ points $P_1,\dots,P_n \in \real^2$ in generic position.\footnote{These are the only nontrivial bulk deformations.} In view of this, we choose an input $\incoming \in (\mathcal{G}/\mathcal{I})^{2,2}$ of the form
\begin{equation}\label{eqn:introduction_input}
\incoming = \sum_{i} u_i \delta_{P_i}  (\partial_{1} \wedge \partial_2),
\end{equation}
where $u_i$ is a formal variable in the ring
$R = R_n := \comp\left[u_1,\dots,u_n \right] / \left(u_{i}^2 \mid 1\leq i \leq n \right)$
(equipped with the maximal ideal $\mathbf{m} = \mathbf{m}_n := (u_1,\dots,u_n)$) which corresponds to the point $P_i$, $\partial_1 \wedge \partial_2$ is the canonical holomorphic bi-vector field on $(\comp^*)^2$ and $\delta_{P_i}$ is a Dolbeault $(0,2)$-form with asymptotic support at $P_i$.
The idea to work with the ring $R_n$ is entirely motivated by the tropical geometry setup in Gross' work \cite{Gross10}. Also, the form $\delta_{P_i}$ should be viewed as a smoothing of a `delta-form' supported at $P_i$ (cf. \cite{kwchan-leung-ma}).

As in \cite{kwchan-leung-ma}, a solution $\Phi$ to the extended MC equation \ref{eqn:extended_MC_equation_intro} of $(\mathcal{G}/\mathcal{I})^{*,*}$ can be constructed using Kuranishi's method \cite{Kuranishi65}, namely, by summing over directed ribbon weighted $d$-pointed $k$-trees (see Definition \ref{def:weighted_tree}) with input $\incoming$.
The MC solution $\Phi$ can then be decomposed as
\begin{equation}\label{eqn:decomposition_MC_sol}
\Phi = \incoming + \varXi^{0,0} + \varXi^{1,1},
\end{equation}
where $\varXi^{i,i} \in (\mathcal{G}/\mathcal{I})^{i,i}$.
A major discovery of this paper is that, as $\hp \rightarrow 0$, the correction terms $\varXi^{1,1}$ and $\varXi^{0,0}$ give rise to tropical disks of Maslov index 0 and 2 respectively:
\begin{theorem}[=Theorem \ref{thm:main_theorem}]\label{thm:main_theorem_intro_1}
For a toric surface $X$ equipped with the Hori-Vafa potential $W$, there is a solution $\Phi$ to the Maurer-Cartan equation decomposed as in \eqref{eqn:decomposition_MC_sol} such that each of the terms $\varXi^{0,0}, \varXi^{1,1}$ can be expressed as a sum over tropical disks $\wptr$ transversal to the toric divsor $D_\infty$ whose moduli space $\overline{\mathfrak{M}}^{\wptr}$ is non-empty of codimension $1-\frac{MI(\wptr)}{2}$ in $B_0 = \real^2$ (where $MI$ denotes the Maslov index):
$$
\varXi^{0,0}  = \sum_{MI(\wptr)=2} \alpha_{\wptr} \text{Mono}(\wptr), \quad
\varXi^{1,1}  = \sum_{MI(\wptr)=0} \alpha_{\wptr} \text{Log}(\Theta_\wptr);
$$
here $\text{Mono}(\wptr)$ is a holomorphic function and $\text{Log}(\Theta_\wptr)$ is a holomorphic vector field defined explicitly for a tropical disk $\wptr$, and $\alpha_\wptr$ is a Dolbeault $(0,1-\frac{MI(\wptr)}{2})$-form with asymptotic support along the $(1+\frac{MI(\wptr)}{2})$-dimensional tropical polyhedral subset $Q_\wptr \subset B_0$ traced out by the stop $Q$ of the tropical disks in the moduli space $\overline{\mathfrak{M}}^{\wptr}$ (see Definition \ref{def:tropical_curve}).

Furthermore, the following properties hold:
\begin{equation*}
\begin{array}{ll}
\displaystyle\lim_{\hp \rightarrow 0} \alpha_{\wptr}|_{x} = 1 & \text{for any $x$ in the interior $\text{Int}(Q_\wptr)$ when $MI(\wptr) = 2$},\\
\displaystyle\lim_{\hp \rightarrow 0} \int_{\varrho} \alpha_{\wptr} = -1 & \text{for any $\varrho \pitchfork \text{Int}_{\text{re}}(Q_\wptr)$ positively when $MI(\wptr) = 0$},\\
\end{array}
\end{equation*}
where $\varrho$ is any affine line intersecting $Q_\wptr$ positively and transversally in its relative interior $\text{Int}_{\text{re}}(Q_\wptr)$.
\end{theorem}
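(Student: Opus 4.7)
The plan is to construct $\Phi$ by Kuranishi iteration on the dgBV quotient $(\mathcal{G}/\mathcal{I})^{*,*}$, organize the resulting expansion combinatorially as a sum over weighted trees, match each such tree to a tropical disk via asymptotic support propagation, and then pin down the semi-classical limits in the last two displays. Concretely, I would fix a homotopy operator $H$ for $\bar{\partial}_W$ on $(\mathcal{G}/\mathcal{I})^{*,*}$ and solve $\Phi = \incoming - H\bigl(\half[\Phi,\Phi]\bigr)$ iteratively. Because $R_n$ is truncated by $u_i^2 = 0$, the recursion terminates at depth $\leq n$, and each summand is indexed by a directed ribbon weighted $d$-pointed $k$-tree $\wptr$ whose input leaves carry distinct variables $u_{i_1},\dots,u_{i_d}$, giving a decomposition $\Phi = \incoming + \sum_\wptr \Phi_\wptr$, which is then regrouped by form degree to yield $\varXi^{0,0}$ and $\varXi^{1,1}$.

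Next I would show that the asymptotic support of each $\Phi_\wptr$ is exactly the tropical polyhedron $Q_\wptr$ traced out by the stop of the associated tropical disk. At each internal vertex of the tree the Lie bracket combines two incoming momenta in the manner prescribed by the tropical balancing rule, while applying $H$ to a form with asymptotic support along a codimension-$k$ locus produces a form with asymptotic support along the codimension-$(k-1)$ ray propagating in the bracket direction; this is the extended-polyvector-field analogue of the propagation lemma proved in \cite{kwchan-leung-ma}. The form degree of $\Phi_\wptr$ is dictated by the Maslov index of the corresponding tropical disk via Euler-characteristic bookkeeping on the tree, so trees with Maslov index $2$ produce $(0,0)$-forms tensored with a holomorphic monomial which I would identify with $\text{Mono}(\wptr)$, while trees with Maslov index $0$ produce $(0,1)$-forms tensored with a holomorphic vector field which I would identify with $\text{Log}(\Theta_\wptr)$ by unwinding the brackets vertex-by-vertex, using that the bracket of two toric monomial vectors on $(\comp^*)^2$ is controlled by the determinant of their weight vectors.

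For the two asymptotic identities, in the Maslov index $2$ case $\alpha_\wptr$ is a $(0,0)$-form built as an iterated application of $H$ to a product of Gaussian-type approximate delta-forms. Inductively along the tree, each $H$-application transverse to the preceding propagation direction returns an approximate delta-form shifted along the new direction, so the final value at any interior point of the two-dimensional $Q_\wptr$ is a product of factors each tending to $1$ as $\hp \to 0$. In the Maslov index $0$ case $\alpha_\wptr$ is a $(0,1)$-form whose pairing with a positive transversal $\varrho$ reduces, after carrying out all the preceding iterations exactly as in the Maslov index $2$ analysis, to a single integral of $H$ against an approximate delta along $\varrho$; this integral evaluates to $-1$ in the limit, where the sign reflects the orientation convention pairing the propagation direction of the wall with the outward normal of $\varrho$.

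The main obstacle is the asymptotic support analysis underlying the identification with tropical disks: a priori the bracket of two terms supported along two affine rays can produce support on an entire neighborhood of their intersection, and one must rule out all such non-tropical contributions. They should be eliminated by a combination of Jacobi-type vanishing for parallel propagations on $(\comp^*)^2$ and the quotient by the error ideal $\mathcal{I}$, but the bookkeeping is more intricate than in \cite{kwchan-leung-ma} because the output can now also be a $(0,0)$-form, so extra cancellations among distinct trees (coming from the ribbon structure and from the BV operator $\bvd$) must be tracked in order to leave only the contributions associated to honest tropical disks transversal to $D_\infty$.
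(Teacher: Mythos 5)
Your overall strategy coincides with the paper's: Kuranishi iteration on the quotient dgBV algebra, a sum-over-ribbon-trees expansion, propagation of asymptotic support via the homotopy operator, and an explicit evaluation of the semi-classical limits. Two of your closing concerns, however, misidentify the actual mechanism, and one needed technical ingredient is missing.

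First, the elimination of ``non-tropical'' contributions does not rely on any Jacobi-type cancellation, nor on cancellations among ribbon structures or interaction with the BV operator $\bvd$. The paper works term-by-term: the wedge-product lemma for asymptotic support (Lemma~\ref{lem:support_product}, imported from Lemma~4.22 of \cite{kwchan-leung-ma}) says that if two input forms have asymptotic support on polyhedra $P_1,P_2$ intersecting transversally, their product has asymptotic support of codimension $\mathrm{codim}(P_1)+\mathrm{codim}(P_2)$ on (a polyhedron containing) $P_1\cap P_2$, while if the intersection is \emph{not} transversal the product lands in $\mathcal{W}^{-\infty}$ and therefore vanishes in the quotient by $\mathcal{I}^{*,*}$. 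Combined with the observation that $\mathrm{Mult}_v(\wptr)=0$ exactly when the two incoming slopes at $v$ are parallel, each single tree contribution either matches a rigid tropical disk or is individually zero in $(\mathcal{G}/\mathcal{I})^{*,*}$. In particular, contributions from the $2^{d-1}$ ribbon structures on a fixed weighted tree are \emph{equal}, not mutually canceling; they are collected by the $1/2^{d-1}$ combinatorial prefactor in the definition of $\mathfrak{l}_{k,d}$.

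Second, for the homotopy operator to return asymptotic support on the correct ray $Q_\wptr$ (rather than on the opposite side, or on something empty), you need the analogue of Lemma~\ref{lem:Q_wrtr_and_homotopy}: the reference hyperplane used to define $H_m$ must be chosen with $R$ large enough that the half-space $U^m_{1,+}$ contains all the finitely many relevant loci $Q_\wptr$ and $\mathrm{Sing}(\mathscr{D})$. Your proposal does not address this choice, and without it the integral operator $I$ underlying $H_m$ could place the output support on $I(P)_-$ instead of $I(P)_+$, killing the identification with the tropical ray. Similarly, the precise evaluation of the two $\hp\to 0$ limits — and especially the overall sign $(-1)$ in the Maslov index $0$ case — requires both the iterated pushforward identity (Lemma~\ref{lem:iterated_integral}) and the orientation-comparison bookkeeping of Lemma~\ref{lem:disk_moduli_orientation_comparsion} via the sign $(-1)^{\chi(\wptr)}$ attached to each ribbon tree; ``the sign reflects the orientation convention'' is a reasonable guess, but it does not determine the global sign without the inductive orientation lemma.
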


Following Gross \cite{Gross10}, we define the $n$-pointed perturbed LG potential $W_n(Q)$ in terms of counts of Maslov index $2$ tropical disks with interior marked points possibly passing through $P_1,\dots,P_n$ and with stop at a fixed point $Q \in \real^2$; see Section \ref{sec:counting} for the precise definitions. Then Theorem \ref{thm:main_theorem_intro_1} gives
a bijective correspondence between leading order terms of $\varXi^{0,0}$ as $\hp \to 0$, tropical disks $\wptr$ with $MI(\wptr) = 2$ and $\overline{\mathfrak{M}}^{\wptr} \neq \emptyset$, and hence terms in the perturbed potential $W_n(Q)$:
\begin{equation*}
\left\{
\begin{array}{c}
\text{terms in the}\\
\text{expression of $\varXi^{0,0}$}
\end{array}
\right\}
\overset{\text{Theorem \ref{thm:main_theorem_intro_1}}}{\longleftrightarrow}
\left\{
\begin{array}{c}
\text{Tropical disks $\wptr$}\\
\text{with $MI(\wptr) = 2$}
\end{array}
\right\}
\overset{\text{\cite{Gross10}}}{\longleftrightarrow}
\left\{
\begin{array}{c}
\text{terms in the}\\
\text{perturbation $W_n(Q)$}
\end{array}
\right\}.
\end{equation*}
In the case of $X = \mathbb{P}^2$, the tropical disks above are precisely those considered by Gross \cite{Gross10}. Indeed we have
$$\lim_{\hp\rightarrow 0}\varXi^{0,0}(Q) = \sum_{MI(\wptr)=2} \text{Mono}(\wptr),$$
which coincides with Gross' definition of the $n$-pointed potential $W_n(Q)$. This explains how solutions to the extended MC equation \eqref{eqn:extended_MC_equation_intro} lead naturally to the perturbed potential $W_n$.

The $n$-pointed potential $W_n(Q)$ depends on $Q$, and Gross \cite{Gross10} proved that the dependence is dictated by wall-crossing formulas across walls of a scattering diagram $\mathscr{D}$ constructed from the Maslov index $0$ tropical disks with interior marked points possibly passing through the $n$ marked points $P_1,\dots,P_n$. Theorem \ref{thm:main_theorem_intro_1} gives a bijective correspondence between leading order terms of $\varXi^{1,1}$ as $\hp \to 0$, tropical disks $\wptr$ with $MI(\wptr)= 0$ and $\overline{\mathfrak{M}}^{\wptr} \neq \emptyset$, and hence walls in the scattering diagram $\mathscr{D}$:
\begin{equation*}
\left\{
\begin{array}{c}
\text{terms in the}\\
\text{expression of $\varXi^{1,1}$}
\end{array}
\right\}
\overset{\text{Theorem \ref{thm:main_theorem_intro_1}}}{\longleftrightarrow}
\left\{
\begin{array}{c}
\text{Tropical disks $\wptr$}\\
\text{with $MI(\wptr)=0$}
\end{array}
\right\}
\overset{\text{\cite{Gross10}}}{\longleftrightarrow}
\left\{
\begin{array}{c}
\text{walls $\mathbf{w}_\wptr = (m_\wptr, Q_\wptr, \Theta_\wptr)$}\\
\text{in the diagram $\mathscr{D}$}
\end{array}
\right\}.
\end{equation*}

\begin{remark}
Underlying the above bijective correspondences is an interplay between the differential-geometric properties of the dgBV algebra $(\mathcal{G}/\mathcal{I})^{*,*}$ and the combinatorial properties of tropical disks, which has also played an important role in \cite{kwchan-leung-ma}. As will be seen in Section \ref{sec:tropical_dgla}, this leads to an extended version of the tropical Lie algebra which we call the {\em tropical dgLa}.
\end{remark}

\subsection{Wall-crossing from Maurer-Cartan solutions}\label{sec:wall_crossing_formula}

Besides giving enumerative meanings to correction terms in the MC solution $\Phi$, Theorem \ref{thm:main_theorem_intro_1}, when combined with the main results in \cite{kwchan-leung-ma}, has an interesting corollary which can be viewed as an alternative proof (via gauge equivalences) of the wall-crossing formulas (\cite[Theorem 4.12]{Gross10}) for Gross' perturbed potential $W_n$. Let us explain the argument in this subsection.



By definition, the scattering diagram $\mathscr{D} = \mathscr{D}(\mathcal{P},\Sigma,P_1,\dots,P_n)$ consists of walls $\mathbf{w}_\wptr = (m_\wptr, Q_\wptr, \Theta_\wptr)$, the support of each being the tropical polyhedral subset $Q_\wptr$ traced out by the stop of a tropical disk $\wptr$ (see Section \ref{sec:scattering_diagram}). The intersection of these $Q_\wptr$'s is the set $\text{Sing}(\mathscr{D}) \setminus \{P_1,\dots,P_n\}$, where $\text{Sing}(\mathscr{D})$ denotes the singular set of $\mathscr{D}$, and a point $\mathfrak{j} \in \text{Sing}(\mathscr{D})\setminus \{P_1,\dots,P_n\}$ is called a {\em joint} in the Gross-Siebert program \cite{gross2011real}.

Restricting to a contractible open neighborhood $U = U_{\mathfrak{j}} \subset \real^2 \setminus \{P_1,\dots,P_n\}$ containing a single joint $\mathfrak{j}$, we have
$\Phi|_{U} = \varXi^{0,0} + \varXi^{1,1}$
because $\delta_{P_i}|_{U} = 0$ and hence $\incoming|_{U} = 0$ in $(\mathcal{G}/\mathcal{I})^{*,*}$. By degree reasons, we see that $\varXi^{1,1}|_{U}$ is itself a solution to the non-extended Maurer-Cartan equation \eqref{eqn:original_MC_equation}, namely,
$
\bar{\partial} \varXi^{1,1} + \half [\varXi^{1,1}, \varXi^{1,1}] = 0.
$ Now \cite[Theorems 1.5 and 1.6]{kwchan-leung-ma}, the proofs of which were by asymptotic analysis and completely different from that of \cite{Gross10}, imply the following statement which originally appeared in Gross \cite{Gross10}:
\begin{corollary}[Proposition 4.7 in \cite{Gross10}]\label{prop:consistency}
For any point $\mathfrak{j} \in \text{Sing}(\mathscr{D}) \setminus \{P_1,\dots,P_n\}$, we have
$
\Theta_{\gamma_\mathfrak{j},\mathscr{D}}  = \text{Id}
$
for any loop $\gamma_\mathfrak{j}$ around $\mathfrak{j}$ in a sufficiently small contractible neighborhood $U_\mathfrak{j}$ of $\mathfrak{j}$.
\end{corollary}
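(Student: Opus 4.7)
The plan is to localize the extended Maurer-Cartan solution $\Phi$ to a small contractible neighborhood $U_\mathfrak{j} \subset \real^2 \setminus \{P_1,\dots,P_n\}$ of the joint $\mathfrak{j}$, reduce the consistency assertion for $\mathscr{D}$ at $\mathfrak{j}$ to a statement about a pure (non-extended) Kodaira-Spencer Maurer-Cartan solution, and then invoke the consistency theorem already proved in \cite{kwchan-leung-ma}. Since no $P_i$ lies in $U_\mathfrak{j}$, the smoothed delta forms $\delta_{P_i}$ vanish there, so $\incoming|_{U_\mathfrak{j}} = 0$ in $(\mathcal{G}/\mathcal{I})^{*,*}$ and $\Phi|_{U_\mathfrak{j}} = \varXi^{0,0} + \varXi^{1,1}$.

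The first substantive step is to extract from $\bar{\partial}_W \Phi + \half[\Phi,\Phi] = 0$ the equation satisfied by $\varXi^{1,1}|_{U_\mathfrak{j}}$ alone. Organizing by polyvector bidegree, the $PV^{1,2}$ component collapses to
\[
\bar{\partial} \varXi^{1,1} + \half [\varXi^{1,1}, \varXi^{1,1}] = 0,
\]
because the Schouten bracket of two functions vanishes (so $[W,\varXi^{0,0}] = 0$), while the remaining brackets $[W,\varXi^{1,1}]$ and $[\varXi^{0,0},\varXi^{1,1}]$ land in bidegree $(0,1)$ rather than $(1,2)$. Hence $\varXi^{1,1}|_{U_\mathfrak{j}}$ is, on its own, a solution of the non-extended Kodaira-Spencer Maurer-Cartan equation \eqref{eqn:original_MC_equation} on $U_\mathfrak{j}$.

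Next I would apply Theorem \ref{thm:main_theorem_intro_1} to write $\varXi^{1,1}|_{U_\mathfrak{j}} = \sum_\wptr \alpha_\wptr \,\text{Log}(\Theta_\wptr)$, summed over those Maslov index $0$ tropical disks $\wptr$ whose stop locus $Q_\wptr$ meets $U_\mathfrak{j}$. By the description in Section \ref{sec:scattering_diagram} these are precisely the tropical disks giving the walls $\mathbf{w}_\wptr = (m_\wptr, Q_\wptr, \Theta_\wptr)$ of $\mathscr{D}$ passing through $\mathfrak{j}$, and the asymptotic normalization $\lim_{\hp \rightarrow 0} \int_\varrho \alpha_\wptr = -1$ from Theorem \ref{thm:main_theorem_intro_1} is precisely the one matching the wall-function convention of \cite{kwchan-leung-ma}. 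Plugging this MC solution into \cite[Theorems 1.5 and 1.6]{kwchan-leung-ma}, which state that any such asymptotic Kodaira-Spencer solution reconstructs a \emph{consistent} scattering diagram, immediately yields $\Theta_{\gamma_\mathfrak{j},\mathscr{D}} = \text{Id}$ for every small loop $\gamma_\mathfrak{j}$ around $\mathfrak{j}$.

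The main obstacle is the bookkeeping required to identify the scattering diagram produced from $\varXi^{1,1}|_{U_\mathfrak{j}}$ via \cite{kwchan-leung-ma} with the restriction of $\mathscr{D}$ around $\mathfrak{j}$ itself (rather than with some equivalent diagram). Concretely, one must verify that each Fourier-mode contribution at a wall $\mathbf{w}_\wptr$ recovers the same monomial direction $m_\wptr$, the same support $Q_\wptr$, and the same wall-crossing automorphism $\Theta_\wptr$ as those defined combinatorially in Section \ref{sec:scattering_diagram}. This amounts to aligning the normalization of $\alpha_\wptr$ supplied by Theorem \ref{thm:main_theorem_intro_1} with the integration convention of \cite{kwchan-leung-ma}; it is where the combinatorial scattering formalism and the differential-geometric formalism must be pasted together, and once this is done, the consistency result of \cite{kwchan-leung-ma} transports verbatim to the vanishing of the monodromy around $\mathfrak{j}$ in $\mathscr{D}$.
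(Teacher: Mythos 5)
Your proposal is correct and follows essentially the same argument as the paper: restrict to a contractible neighborhood $U_\mathfrak{j}$ where $\incoming$ vanishes, observe by bidegree reasons that $\varXi^{1,1}|_{U_\mathfrak{j}}$ alone solves the non-extended Maurer-Cartan equation, and then invoke Theorems 1.5 and 1.6 of \cite{kwchan-leung-ma} together with Theorem \ref{thm:main_theorem_intro_1} to identify the resulting consistent scattering diagram with $\mathscr{D}$ near $\mathfrak{j}$. You merely spell out the degree-bookkeeping (which the paper compresses to the phrase ``by degree reasons'') and flag the normalization-matching step explicitly, but the route is the same.
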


Next we would like to work locally near a wall $Q_\wptr$ of the scattering diagram $\mathscr{D}$. So we consider a contractible open subset $U \subset M_\real \setminus \left( \{P_1,\dots, P_n \} \cup \text{Sing}(\mathscr{D}) \right)$, which is separated into two chambers $U_+$ and $U_-$ by the wall $Q_\wptr \cap U$ as shown in Figure \ref{fig:delta_integrate_to_gauge} (here $U_\pm$ are chosen according to the orientation of the ray $Q_\wptr$).

Results from \cite[Section 4]{kwchan-leung-ma} imply that
\begin{equation}\label{eqn:wall_crossing_factor}
\facs
:= \left\{
\begin{array}{ll}
\displaystyle \text{Log}(\Theta_\wptr) & \text{on $U_+$}\\
\displaystyle 0 & \text{on $U_-$}
\end{array}\right.
\end{equation}
is the unique gauge solving the equation
\begin{equation}\label{eqn:gauge_equivalent}
e^{\text{ad}_\facs}  \bar{\partial}e^{-\text{ad}_\facs} = \bar{\partial} - \left[ \left( \frac{e^{\text{ad}_\facs}- \text{Id}}{\text{ad}_\facs} \right) \bar{\partial}(\facs),\cdot \right] = \bar{\partial} + [\varXi^{1,1},\cdot]
\end{equation}
and satisfying the condition that $\facs|_{U_-} = 0$. The Maurer-Cartan solution $\Xi^{1,1}$ behaves like a `delta-form' supported along the wall $Q_\wptr$, while the local gauge $\varphi$ in $U$ behaves like a step-function jumping across the wall $Q_\wptr$ as shown in Figure \ref{fig:delta_integrate_to_gauge}.

\begin{figure}[h]
	\centering
	\includegraphics[scale=0.4]{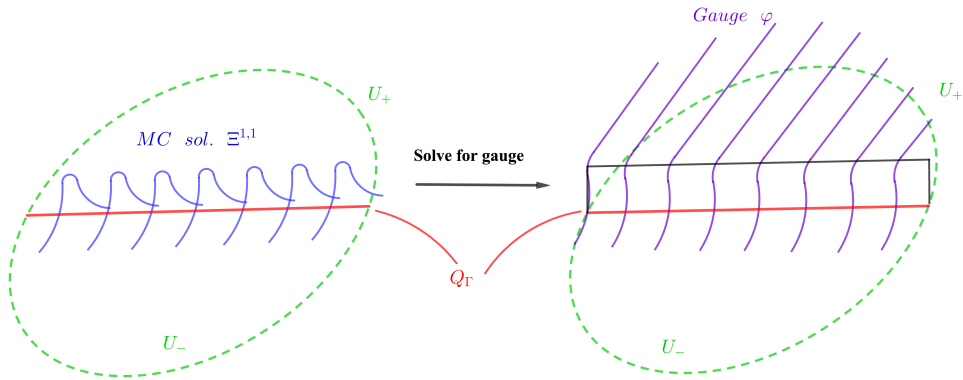}
	\caption{The gauge $\varphi$ as step-function}
	\label{fig:delta_integrate_to_gauge}
\end{figure}

When the extended MC equation \eqref{eqn:extended_MC_equation_intro} is restricted to $U$, we have $\Phi|_U = \varXi^{0,0} + \varXi^{1,1}$ and it decomposes into two equations:
\begin{align*}
\bar{\partial}\varXi^{1,1} + \half [\varXi^{1,1}, \varXi^{1,1}]  & = 0, \\
\bar{\partial}\varXi^{0,0} + [W, \varXi^{1,1}] + [\varXi^{0,0}, \varXi^{1,1}]  & = 0.
\end{align*}
The second equation can be rewritten as
$\left(\bar{\partial} + [\varXi^{1,1},\cdot]\right) \left(W + \varXi^{0,0}\right) = 0,$
meaning precisely that $W + \varXi^{0,0}$ is a {\em holomorphic function} with respect to the complex structure defined by $\bar{\partial} + [\varXi^{1,1},\cdot]$.
Since $e^{\text{ad}_{\facs}}  \bar{\partial} e^{-\text{ad}_{\facs}} = \bar{\partial} + [\varXi^{1,1},\cdot]$, this is equivalent to saying that the function $e^{-\text{ad}_\facs}(W + \varXi^{0,0})$, which is globally defined on $U$, is holomorphic with respect to the {\em original} Dolbeault operator $\bar{\partial}$.

Now letting $W_{n,\pm} := \left(W + \varXi^{0,0}\right)|_{U_{\pm}}$ on $U_{\pm}$ respectively, we have
$$
e^{-\text{ad}_\facs}(W + \varXi^{0,0}) = \left\{
\begin{array}{ll}
\displaystyle \Theta_\wptr^{-1}(W_{n,+}) & \text{on $U_+$},\\
\displaystyle W_{n,-} & \text{on $U_-$}.
\end{array}
\right.
$$
So the fact that $e^{-\text{ad}_\facs}(W + \varXi^{0,0})$ is a {\em globally defined} function on $U$ implies that $\Theta_\wptr(W_{n,-}) = W_{n,+}$.
Applying this to a finite number of walls, we obtain the following wall-crossing formula which originally appeared in Gross \cite{Gross10}:
\begin{corollary}[Theorem 4.12 in Gross \cite{Gross10}]
If $Q,Q' \in \real^2$ are not lying on any walls in the scattering diagram $\mathscr{D}$, then we have
\begin{equation}\label{eqn:wall_crossing_formula}
W_n(Q') = \Theta_{\gamma,\mathscr{D}}(W_n(Q)),
\end{equation}
for any path $\gamma \subset M_\real \setminus \text{Sing}(\mathscr{D})$ joining $Q$ to $Q'$.
\end{corollary}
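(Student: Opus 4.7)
The plan is to promote the single-wall identity $\Theta_\wptr(W_{n,-}) = W_{n,+}$---already established in the discussion preceding the corollary---to a statement along an arbitrary path $\gamma$. The essential input is the identification, furnished by Theorem \ref{thm:main_theorem_intro_1}, of the leading order of $(W+\varXi^{0,0})|_Q$ at a regular point $Q \in \real^2$ with the $n$-pointed perturbed potential $W_n(Q)$; this allows the local gauge-theoretic statement on a contractible neighborhood $U$ of a wall to be recast as $W_n(Q_+) = \Theta_\wptr(W_n(Q_-))$ for any two points $Q_\pm$ lying in the chambers $U_\pm$ adjacent to that wall.

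Given $Q,Q'$ and a path $\gamma$ as in the statement, I would first perturb $\gamma$ within its homotopy class in $M_\real \setminus \text{Sing}(\mathscr{D})$ so that the perturbed path meets the walls of $\mathscr{D}$ transversally, at relative interior points, and one at a time. Choose a subdivision $Q = Q_0, Q_1,\dots, Q_N = Q'$ along it together with contractible open sets $U_i$, each containing the segment from $Q_{i-1}$ to $Q_i$ and exactly one wall $Q_{\wptr_i}$. Applying the single-wall identity on each $U_i$ gives
\begin{equation*}
W_n(Q_i) = \Theta_{\wptr_i}^{\pm 1}\bigl(W_n(Q_{i-1})\bigr),
\end{equation*}
the sign being determined by the direction of the crossing, and composing the crossings yields the desired formula $W_n(Q') = \Theta_{\gamma,\mathscr{D}}(W_n(Q))$ along the perturbed path.

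To descend to the original $\gamma$, I would compare two such perturbations $\gamma_1,\gamma_2$: the loop $\gamma_2 \cdot \gamma_1^{-1}$ in $M_\real \setminus \text{Sing}(\mathscr{D})$ is homotopic to a product of small loops around joints of $\mathscr{D}$, and Corollary \ref{prop:consistency} tells us that the scattering operator around each such loop is the identity. Thus $\Theta_{\gamma_1,\mathscr{D}} = \Theta_{\gamma_2,\mathscr{D}}$ as operators acting on $W_n(Q)$, so the formula passes to $\gamma$ itself. The main obstacle I anticipate lies in the first step---namely the chamber-wise identification $\lim_{\hp\to 0}(W+\varXi^{0,0})|_Q = W_n(Q)$---which rests on the bijective correspondence of Theorem \ref{thm:main_theorem_intro_1} between the terms of $\varXi^{0,0}$ and Maslov-index-$2$ tropical disks, combined with the asymptotic property $\lim_{\hp\to 0}\alpha_{\wptr}|_x = 1$ on $\text{Int}(Q_\wptr)$. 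Once that is in place, the concatenation and homotopy arguments are essentially formal.
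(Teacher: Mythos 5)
Your proposal is correct and follows essentially the same route as the paper: establish the single-wall identity $\Theta_\wptr(W_{n,-}) = W_{n,+}$ via the gauge-theoretic analysis of the Maurer--Cartan solution $\Phi|_U = \varXi^{0,0} + \varXi^{1,1}$, then iterate it over the finitely many walls crossed by (a generic perturbation of) $\gamma$, with Corollary \ref{prop:consistency} guaranteeing independence of the choice of perturbation. The paper compresses this iteration and homotopy-invariance step into the phrase ``applying this to a finite number of walls,'' so your proof simply makes explicit what the paper leaves implicit.
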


\subsection{Remarks}
We end this introduction by a couple remarks.
\begin{enumerate}
\item
Just as in Gross \cite{Gross10}, the tropical disks which appear here (see Section \ref{sec:counting}) correspond to holomorphic disks in the toric variety $X$ which are transversal to the toric divisor $D_\infty$, so the tropical counts would give the correct Gromov-Witten invariants only when $X$ is a toric Fano surface.
One way to get the correct Gromov-Witten invariants in general is to apply the technique of {\em tropical modification} which deforms the ambient space so that the hidden curves (i.e. curves lying inside the toric divisor $D_\infty$) can be seen; see e.g. \cite{Kalinin15} for an introduction. We expect that our results would still hold if we replace the Hori-Vafa potential with the Lagrangian Floer potential $W^{\text{LF}}$, and in that case $W_n$ would correspond to the bulk-deformed potential function defined using suitable tropical modifications.

\item
To generalize our results to higher dimensions, one again needs a proper definition of tropical counts. In that case, we shall allow interior insertions of tropical cycles of different codimensions, instead of just points in generic position. It should be straightforward to generalize our results when only point insertions are involved. For other tropical cycles, what is missing is a description of such cycles by means of elements in the tropical dgLa as in equation \eqref{eqn:introduction_input}.

\end{enumerate}

\section*{Acknowledgement}
We thank Conan Leung and Grigory Mikhalkin for their encouragement and interest in this work. The first named author would also like to thank Nikita Kalinin and Grigory Mikhalkin for very useful discussions on the technique of tropical modification. The second named author would like to thank Si Li for useful discussions on the polyvector field approach to deformation theory of LG models, and also hospitality during his visit at the Yau Mathematical Sciences Center (YMSC) of Tsinghua University. We also thank the anonymous referee for carefully reading our paper and providing useful comments.

The work of K. Chan was supported by grants of the Hong Kong Research Grants Council (Project No. CUHK14302015 $\&$ CUHK14314516).  The work of Z. N. Ma was partially supported by the Institute of Mathematical Sciences (IMS) and Department of Mathematics at The Chinese University of Hong Kong. 
\section{Tropical counting in dimension 2}\label{sec:counting}

We fix, once and for all, a rank $2$ lattice $M$ together with its dual lattice $N$, and write $M_\real = M \otimes_\inte \real$ and $N_\real = N \otimes_\inte \real$ for the corresponding real vector spaces. An element in $M$ (resp. $N$) will be denoted by $\mathsf{m}$ (resp. $n$). Let $\Sigma\subset M_\real$ be a complete rational polyhedral fan and $X_\Sigma$ be the associated toric surface. We take $K := \mathcal{K}_{X_\Sigma} \cap H^2(X_\Sigma,\inte)$ to be the monoid of integral K\"ahler forms, where $\mathcal{K}_{X_\Sigma}$ is the K\"ahler cone of $X_\Sigma$. We also use $\Sigma(1)$ to denote the set of $1$-dimensional cones in $\Sigma$ and $D_\rho$ to denote the toric divisor corresponding to $\rho \in \Sigma(1)$. The purpose of this section is to define the counting of tropical disks following \cite{Mikhalkin05, Gross10}, with slight modifications.

\subsection{The Hori-Vafa LG mirror}

\begin{notation}\label{not:initial_monoid}

If we fix a Lagrangian torus fiber $L$ of the moment map $p: X_\Sigma \to \mathbf{P}$, then $\pi_2(X, L) \cong \sum_{\rho \in  \Sigma(1)} \inte \cdot m_\rho$ is freely generated by the classes $m_\rho$'s of Maslov index 2 holomorphic disks in $(X, L)$, where $m_\rho$ is the unique disk class which intersects the toric divisor $D_\rho$ exactly once.

Let $\mathcal{P}$ be the nonnegative cone $\pi_2(X, L)_{\geq 0} \subset \pi_2(X, L)$ generated by classes $\beta \in \pi_2(X, L)$ such that $\int_{\beta} \omega \geq 0$ for all $\omega \in K$, and $\mathcal{Q}:=\pi_2(X) \cap \pi_2(X, L)_{\geq 0}$ be the effective cone (or Mori cone) of $X_\Sigma$. We have $\mathcal{P}^{gp} = \pi_2(X, L)$ and $\mathcal{Q}^{gp} = \pi_2(X)$, where $\mathcal{P}^{gp}$ and $\mathcal{Q}^{gp}$ are the abelian groups associated to $\mathcal{P}$ and $\mathcal{Q}$ respectively, and the exact sequence of monoids
\begin{equation}\label{monoid_sequence}
\xymatrix{0 \ar[r] & \mathcal{Q}  \ar[r]  & \mathcal{P}   \ar[r]^{\theta} & M \ar[r] & 0}
\end{equation}
where $M$ is identified with $\pi_1(L)$ and $\theta(\beta) := \partial \beta$ is the map taking boundary of the class $\beta \in \mathcal{P}$.

We will use $m$ to denote an element of $\mathcal{P}$ and $\bar{m}$ to denote its image $\theta(m) \in M$.
Note that $\theta: \mathcal{P} \to M$ maps the standard basis $\{m_\rho\} \subset \mathbb{Z}^{|\Sigma(1)|}$ to the generators of the 1-dimensional cones $\rho \in \Sigma(1)$.
\end{notation}

\begin{example}
We consider a fan $\Sigma$ in $M_\real \cong \inte^2$, with its $1$-dimensional cones given by $\rho_i = \real \cdot \mathsf{m}_i$ where we have $\mathsf{m}_1 = (0,1)$, $\mathsf{m}_2 = (-1,0)$, $\mathsf{m}_3 = (0,-1)$ and $\mathsf{m}_4 = (1,1)$. The corresponding toric variety $X_\Sigma$ is the surface $F_1$. We write $\pi_2(X_\Sigma,L) = \bigoplus_{i=1}^4 \inte \cdot m_i$ with $m_i$ being the unique disk class which supports a Maslov index $2$ holomorphic disk intersecting exactly once with the toric boundary $D_{\rho_i}$. In this case $\mathcal{Q}$ is the monoid of integral points in the cone generated by $-m_1 + m_2 +m_4$ and $m_1 + m_3$, while $\mathcal{P}$ is the monoid of integral points in the cone generated by the $m_i$'s together with $-m_1 + m_2 +m_4$.
\end{example}

Consider the polynomial rings $\comp[\mathcal{P}] := \comp[z^m \mid m \in \mathcal{P}]$ and $\comp[\mathcal{Q}] := \comp[q^d \mid d \in \mathcal{Q}]$ and the corresponding affine toric varieties $\check{\mathcal{X}} := \text{Spec}(\comp[\mathcal{P}])$ and $\mathcal{S} = \text{Spec}(\comp[\mathcal{Q}])$. There is a natural morphism $\pi : \check{\mathcal{X}} \rightarrow \mathcal{S}$ induced by the map $\mathcal{Q} \rightarrow \mathcal{P}$ in \eqref{monoid_sequence}. As a result we obtain a toric degeneration of the LG model over $\mathcal{S}$ as in \cite{Gross10, gross2011tropical}. 

\begin{definition}
The {\em Hori-Vafa potential} is the polynomial $W := \sum_{\rho \in \Sigma(1)} z^{m_{\rho}}$ on $\check{\mathcal{X}}$.
\end{definition}

\subsection{Tropical disks}

Before defining tropical disks, we first introduce the underlying combinatorial structures:
\begin{definition}\label{k_tree_def}
A {\em (directed) $k$-tree} $\tr$ consists of a finite set of vertices $\bar{\tr}^{[0]}$ together with a decomposition
$\bar{\tr}^{[0]} = \tr^{[0]}_{in} \sqcup \tr^{[0]} \sqcup \{v_o\},$
where $\tr^{[0]}_{in}$, called the set of incoming vertices, is a set of size $k$ and $v_o$ is called the outgoing vertex (we also write $\tr^{[0]}_\infty := \tr^{[0]}_{in} \sqcup \{v_o\}$), a finite set of edges $\bar{\tr}^{[1]}$, and two boundary maps $\partial_{in} , \partial_o : \bar{\tr}^{[1]} \rightarrow \bar{\tr}^{[0]}$ (here $\partial_{in}$ stands for incoming and $\partial_o$ stands for outgoing), satisfying the following conditions:
\begin{enumerate}
\item
Every vertex $v \in \tr^{[0]}$ is trivalent, and satisfies $\# \partial_{o}^{-1}(v) = 2$ and $\# \partial_{in}^{-1}(v) = 1$.
\item
Every vertex $v \in \tr^{[0]}_{in}$ has valency one, and satisfies $\# \partial_{o}^{-1}(v) = 0$ and $\# \partial_{in}^{-1}(v) = 1$; we let $\tr^{[1]} := \bar{\tr}^{[1]}\setminus \partial_{in}^{-1}(\tr^{[0]}_{in})$.
\item
For the outgoing vertex $v_o$, we have $\# \partial_{o}^{-1}(v_o) = 1$ and $\# \partial_{in}^{-1}(v_o) = 0$; we let $e_o := \partial_o^{-1}(v_o)$ be the outgoing edge and denote by $v_r \in \tr^{[0]}_{in} \sqcup \tr^{[0]}$ the unique vertex (which we call the root vertex) with $e_o = \partial^{-1}_{in}(v_r)$.
\item
The {\em topological realization}
$|\bar{\tr}| := \left( \coprod_{e \in \bar{\tr}^{[1]}} [0,1] \right) / \sim$
of the tree $\tr$ is connected and simply connected; here $\sim$ is the equivalence relation defined by identifying boundary points of edges if their images in $\tr^{[0]}$ are the same.
\end{enumerate}
Two $k$-trees $\tr_1$ and $\tr_2$ are {\em isomorphic} if there are bijections $\bar{\tr}^{[0]}_1 \cong \bar{\tr}^{[0]}_2$ and $\bar{\tr}^{[1]}_1 \cong \bar{\tr}^{[1]}_2$ preserving the decomposition $\bar{\tr}^{[0]}_i = \tr^{[0]}_{i,in} \sqcup \tr^{[0]}_i \sqcup \{v_{i,o}\}$ and boundary maps $\partial_{i,in}$ and $\partial_{i,o}$. The set of isomorphism classes of $k$-trees will be denoted by $\tree{k}$. For a $k$-tree $\tr$, we abuse notations and use $\tr$ (instead of $[\tr]$) to denote its isomorphism class.

A ribbon $k$-tree is a $k$-tree $\tr$ with a cyclic ordering of $\partial_{in}^{-1}(v) \sqcup \partial_o^{-1}(v)$ for each trivalent vertex $v \in \tr^{[0]}$, and isomorphism of ribbon $k$-trees are required to preserve this ordering.
\end{definition}

\begin{notation}\label{not:deformation_ring}
As in \cite{Gross10, gross2011tropical}, we take $n$ points $P_1,\dots,P_n \in M_\real$ and introduce the ring 
$$R = R_n := \frac{\comp\left[u_1,\dots,u_n \right]}{\left(u_{i}^2 \mid 1\leq i \leq n \right)},$$
where we associate to each point $P_i$ the variable $u_i$. This ring has the maximal ideal $\mathbf{m} = \mathbf{m}_n := (u_1,\dots,u_n)$.
\end{notation}

\begin{definition}\label{def:weighted_tree}
A {\em weighted $d$-pointed $k$-tree} 
is a $(k+d)$-tree $\wptr$ together with an injective map $p: \{1,\dots,d\} \hookrightarrow \partial_{in}^{-1} (\wptr^{[0]}_{in})$ (we use $p_j$ to denote the image $p(j) \in \partial_{in}^{-1} (\wptr^{[0]}_{in})$), a weight $m : \bar{\wptr}^{[1]}  \rightarrow \mathcal{P}$ (we use $m_e$ to denote the image $m(e) \in \mathcal{P}$) and a map $u : \bar{\wptr}^{[1]} \rightarrow R_n$ (we use $u_e$ to denote the image $u(e) \in R_n$) satisfying the following conditions:
\begin{enumerate}
\item
for every $e \in \bar{\wptr}^{[1]}$, $u_e \in R_n$ is a monomial defined via the rule: for each $j = 1, \dots, d$, we have $u_{p_j} = u_{i_j}$ for some $i_j \in \{1, \dots, n\}$ such that $1 \leq i_1 < i_2 < \cdots < i_d \leq n$, and for each $e \in \partial^{-1}_{in}(\wptr^{[0]}_{in}) \setminus \{p_1,\dots,p_d\}$, we have $u_e = 1$;
\item
for every trivalent vertex $v \in \wptr^{[0]}$ attached with two incoming edges $e_1, e_2$ and an outgoing edge $e_3$ we require at least one of $e_1, e_2$ do not belong to $\partial^{-1}_{in}(\wptr^{[0]}_{in}) \setminus \{p_1,\dots,p_d\}$, and we further require $u_{e_3} = u_{e_1} \cdot u_{e_2}$ and $m_{e_3} = m_{e_1} + m_{e_2}$;
\item
$m_e = 0 $ if and only if $e \in \{p_1,\dots,p_d\}$;
\item
for every $e \in \partial^{-1}_{in}(\wptr^{[0]}_{in}) \setminus \{p_1,\dots,p_d\}$, we have $m_e = m_\rho$ for some $\rho \in \Sigma(1)$.
\end{enumerate}
Two weighted $d$-pointed $k$-trees $\wptr_1$ and $\wptr_2$ are said to be {\em isomorphic} if they are isomorphic as $k$-trees and the isomorphism preserves the marked points $p_i$'s and the weight functions $m_e$'s. The set of isomorphism classes of weighted $d$-pointed $k$-trees will be denoted by $\wptree{k}{d}$.

A {\em weighted ribbon $d$-pointed $k$-tree} is a weighted $d$-pointed $k$-tree $\Gamma$ equipped with a ribbon structure on it such that if $e_1$ and $e_2$ are the incoming edges of $v$ with outgoing edge $e_3$ with $e_1,e_2,e_3$ in cyclic ordering, then only $e_1$ can possibly be an edge from $\Gamma^{[0]}_{in}=\partial^{-1}_{in}(\Gamma^{[0]}_{in}) \setminus \{p_1,\dots,p_d\}$. Isomorphisms between these trees are defined as isomorphisms between weighted $d$-pointed $k$-trees which preserve the relevant structures. The set of isomorphism classes of weighted ribbon $d$-pointed $k$-tree is denoted by $\wrtree{k}{d}$.

For a weighted $d$-pointed $k$-tree $\wptr$ (or weighted ribbon $d$-pointed $k$-tree resp.), we abuse notations and use $\wptr$ ($\wrtr$ resp.) instead of $[\wptr]$ ($[\wrtr]$ resp.) to stand for its isomorphism class.
\end{definition}

\begin{notation}\label{not:integer_weight}
Given a weighted $d$-pointed $k$-tree $\wptr$ ($d \geq 0$ and $k \geq 1$), we write $\wptr^{[1]}_{in} := \partial^{-1}_{in}(\wptr^{[0]}_{in}) \setminus \{p_1,\dots,p_d\}$ for the set of incoming edges excluding those which correspond to the marked points.
	
Given any $e \in \bar{\wptr}^{[1]} \setminus \{p_1,\dots,p_d\}$, we let $k_e = 0$ if $\bar{m}_e = 0$, and when $\bar{m}_e \neq 0$, we let $k_e \in \inte_{\geq 0}$ be the unique positive integer such that $\bar{m}_e = k_e \hat{m}_e$ where $\hat{m}_e \in M$ is the primitive element.
The integers $\{k_e\}_{e \in \bar{\wptr}{[1]} \setminus \{p_1,\dots,p_d\}}$ define the {\em weight function} $w_{\wptr}: \bar{\wptr}{[1]} \setminus \{p_1,\dots,p_d\}\to \inte_{\geq 0}$ (hence the name ``weighted $k$-tree'') and the formula $\bar{m}_{e_2} = \bar{m}_{e_1} + \bar{m}_{e_0}$ corresponds to the {\em balancing condition}, both of which appear in the original definition of tropical curves in \cite{Mikhalkin05, Gross10}.
	
We also write $m_{\wptr}$ (or $k_{\wptr}$) and $u_{\wptr}$ for the weight and monomial, respectively, associated to the unique outgoing edge $e_o$ attached to the unique outgoing vertex $v_o$ of $\wptr$.
\end{notation}

\begin{definition}\label{def:multiplicity}
Given a $d$-pointed weighted $k$-tree $\wptr$, we define the {\em multiplicity at a trivalent vertex} $v \in \wptr^{[0]}_0 := \wptr^{[0]}\setminus \partial_o(\{p_1,\dots,p_d\})$ by
$$
\text{Mult}_{v}(\wptr) := |\det(\bar{m}_{e_1},\bar{m}_{e_2})| = |\det(\bar{m}_{e_1},\bar{m}_{e_3})| = |\det(\bar{m}_{e_2},\bar{m}_{e_3})|,
$$
where $e_1, e_2$ are the incoming edges and $e_3$ the outgoing edge attached to $v$.
Then we define the {\em multiplicity} $\text{Mult}(\wptr)$ of $\wptr$ by
$
\text{Mult}(\wptr) := \prod_{v\in \wptr_0^{[0]}} \text{Mult}_v(\wptr).
$
\end{definition}
Note that at a trivalent vertex $v \in \wptr^{[0]}_0$ with incoming edges $e_1, e_2$, the multiplicity $\text{Mult}_{v}(\wptr) \neq 0$ if and only if $\bar{m}_{e_1}, \bar{m}_{e_2}$ are linearly independent in $M_{\real}$.

Given a weighted $d$-pointed $k$-tree $\wptr$, a {\em realization} of $\wptr$ is defined as
$
|\wptr_{\vec{s}}| := \Big( \big(\bigsqcup_{e \in \partial_{in}^{-1}(\wptr^{[0]}_{in})} (\real_{\leq 0})_e \big) \sqcup \left(\bigsqcup_{e \in \wptr^{[1]}} [s_e, 0] \right) \Big) / \sim,
$
for a set of parameters $\vec{s} := (s_e)_{e\in \wptr^{[1]}} \in (\real_{<0})^{|\wptr^{[1]}|}$; here $(\real_{\leq 0})_e$ is just a copy of $\real_{\leq 0}$ and $\sim$ is the equivalence relation defined by identifying boundary points of edges if their images in $\wptr^{[0]}$ are the same. The set of realizations of $\wptr$ is parametrized by $\vec{s} \in (\real_{<0})^{|\wptr^{[1]}|}$.\\

\begin{definition}\label{def:tropical_curve}
A {\em $d$-pointed tropical disk $\varsigma$ 
in $(\mathcal{P},\Sigma,P_1,\dots,P_n;Q)$} consists of a weighted $d$-pointed $k$-tree $\Gamma$ with $u_{\wptr} \neq 0$, a set of parameters $\vec{s} = (s_e)_{e\in \Gamma^{[1]}} \in (\real_{<0})^{|\Gamma^{[1]}|}$, and a proper map $\varsigma: |\Gamma_{\vec{s}}| \rightarrow M_\real$ from the realization
$|\Gamma_{\vec{s}}|$ of $\Gamma$ to $M_\real$, satisfying the following conditions:
\begin{enumerate}
\item
$\varsigma|_{(\real_{\leq 0})_{p_j}} \equiv P_{i_j}$ if the monomial assigned to $p_j$ is $u_{i_j}$; in particular $\varsigma|_{(\real_{\leq 0})_{p_j}}$ a constant map (playing the role of a marked point).
\item
For each incoming edge $e\in \Gamma^{[1]}_{in}$, we have $\varsigma|_{(\real_{\leq 0})_e}(s) = \varsigma|_{(\real_{\leq 0})_e}(0) + s (-\bar{m}_{e})$ for all $s \in \real_{\leq 0}$.
\item
For each $e \in \Gamma^{[1]}$, we have $\varsigma|_{[s_e,0]}(s) = \varsigma|_{[s_e,0]}(0) + s (-\bar{m}_e)$ for all $s \in [s_e, 0]$ (so that the image $\text{Im}(\varsigma|_{[s_e,0]})$ is an affine line segment with slope $-\bar{m}_e$).
\item
The point $\varsigma(v_o) := \varsigma|_{[s_{e_0},0]}(0) \in M_\real$ is called the {\em stop} of the tropical disk $\varsigma$ and we require that $\varsigma(v_o) = Q$.
\end{enumerate}
	
The {\em multiplicity} $\text{Mult}(\varsigma)$ of a tropical disk $\varsigma$ is defined as the multiplicity $\text{Mult}(\Gamma)$ of the underlying weighted $d$-pointed $k$-tree $\Gamma$. Note that $\text{Mult}(\varsigma) \neq 0$ if and only if the images of the two incoming edges at any trivalent vertex are intersecting transversally. The underlying tree $\wptr$ is said to be the {\em combinatorial type} of the tropical disk $\varsigma$. We use $\mathfrak{M}_d^{\wptr} (\mathcal{P},\Sigma,P_1,\dots,P_n;Q)$ to denote the moduli space of tropical disks in $(\mathcal{P},\Sigma,P_1,\dots,P_n;Q)$ with a fixed combinatorial type $\wptr$.

Similarly, we define a {\em tropical disk $\varsigma$ in $(\mathcal{P},\Sigma, P_1,\dots,P_n)$} by allowing the stop $Q$ to vary or by dropping condition (4) above, and we denote by $\mathfrak{M}_d^{\wptr} (\mathcal{P},\Sigma,P_1,\dots,P_n)$ the moduli space of tropical disks in $(\mathcal{P},\Sigma,P_1,\dots,P_n)$ with a fixed combinatorial type $\wptr$. In other words,
$\mathfrak{M}_d^{\wptr} (\mathcal{P},\Sigma,P_1,\dots,P_n) = \bigcup_{Q} \mathfrak{M}_d^{\wptr} (\mathcal{P},\Sigma,P_1,\dots,P_n;Q).$ Notice that there is a natural $\real_+$ action on $\mathfrak{M}_d^{\wptr} (\mathcal{P},\Sigma,P_1,\dots,P_n)$ given by translating the stop $\varsigma(v_o) = Q$ along the direction $-\bar{m}_{e_o}$, so we have a well-defined quotient
$\mathfrak{M}_d^{\wptr} (\mathcal{P},\Sigma,P_1,\dots,P_n) / \real_+,$
which can be regarded as the moduli space of tropical disks as the stop $Q$ goes to infinity along the direction $-\bar{m}_{e_o}$; see \cite{Gross10, gross2011tropical}.

We further define a {\em tropical disk $\varsigma$ in $(\mathcal{P},\Sigma)$} with a fixed combinatorial type $\wptr$ by dropping condition (4) above and replacing condition (1) by only requiring that $\varsigma|_{(\real_{\leq 0})_{p_j}}$ is a constant map for each $j = 1, \dots, d$
\end{definition}

The reader may ask why all the internal vertices $\wptr^{[0]}$ are required to be trivalent. Indeed we have only defined {\em generic} tropical disks and the above moduli spaces are all noncompact. We use this approach because this suffices for the purpose of tropical counting. To compactify these moduli spaces, we need to allow the intervals $[s_e,0]$'s corresponding to the internal edges $e \in \wptr^{[1]}$ to shrink to zero lengths (i.e. by allowing $s_e = 0$), so that some internal vertices are allowed to be of higher valencies.

We use $\overline{\mathfrak{M}}_d^{\wptr} (\mathcal{P},\Sigma)$ to denote the compactified moduli space of tropical disks $\wptr$ in $(\mathcal{P},\Sigma)$ with a fixed combinatorial type thus obtained, which gives a compactification of the union of the moduli spaces $\mathfrak{M}_d^{\wptr} (\mathcal{P},\Sigma,P_1,\dots,P_n)$ as $P_1, \dots, P_n$ vary. We use the notation $\partial \overline{\mathfrak{M}}_d^{\wptr} (\mathcal{P},\Sigma)$ to stand for the set of tropical disks with at least one degenerated internal edges (i.e. $s_e = 0$ for some $e \in \wptr^{[1]}$).


It is not hard to see that
$
\overline{\mathfrak{M}}_d^{\wptr} (\mathcal{P},\Sigma)\cong (\real_{\leq 0})^{|\wptr^{[1]}|} \times M_\real,
$
where the first component $\vec{s} \in (\real_{\leq 0})^{|\wptr^{[1]}|}$ parametrizes the realization $|\wptr_{\vec{s}}|$ of $\wptr$ and the second component $M_\real$ parametrizes the stop $\varsigma(v_o)$. Its dimension is given by
\begin{equation}
\text{dim}_{\real} (\overline{\mathfrak{M}}_d^{\wptr} (\mathcal{P},\Sigma)) = |\Delta(\wptr)| + d +1,
\end{equation}
where $|\Delta(\wptr)| := k$ for a $d$-pointed weighted $k$-tree $\wptr$. This moduli space has a natural stratification coming from the one on $(\real_{\leq 0})^{|\wptr^{[1]}|}$ given naturally by the coordinate hyperplanes $s_e = 0$.

We also need to consider the partial compactification
$$\hat{\mathfrak{M}}_d^{\wptr} (\mathcal{P},\Sigma) := \left(\overline{\mathfrak{M}}_d^{\wptr} (\mathcal{P},\Sigma) \setminus \{\varsigma \mid s_{e_o} = 0\} \right) / \real_+,$$
and we denote by $\partial \hat{\mathfrak{M}}_d^{\wptr} (\mathcal{P},\Sigma)$ the set of tropical disks with $s_e = 0$ for some $e \in \wptr^{[1]} \setminus \{e_o\}$.

\begin{definition}\label{def:evaluation_map}
We define the evaluation maps
$ev_{*} : \overline{\mathfrak{M}}_d^{\wptr} (\mathcal{P},\Sigma) \rightarrow M_\real,$
where $* \in \{1, \dots, d\} \cup \{o\}$, to be the evaluation at a marked point $ev_{*}(\varsigma) = \varsigma(p_*)$ when $* \in \{1,\dots,d\}$, and the evaluation at the outgoing vertex $ev_*(\varsigma) = \varsigma(v_o)$ if $* = o$.
We put these evaluation maps together to obtain the map
$\vec{ev} = (ev_1, \dots, ev_d, ev_o): \overline{\mathfrak{M}}_d^{\wptr} (\mathcal{P},\Sigma) \rightarrow M_\real^{d+1}.$ Similarly, we have the evaluation map
$\hat{ev} = (ev_1, \dots, ev_d): \hat{\mathfrak{M}}_d^{\wptr} (\mathcal{P},\Sigma) \rightarrow M_\real^{d}.$
\end{definition}

\begin{definition}\label{def:generic_position}
We say that $n$ distinct points $P_1, \dots, P_n$ are in {\em generic position} if for any $d \leq n$, any $d$-tuple $(P_{i_1}, \dots, P_{i_d})$ is not lying in the image $\hat{ev}(S)$ of a stratum $S \subset \hat{\mathfrak{M}}_d^{\wptr} (\mathcal{P},\Sigma)$ over which the evaluation map $\hat{ev}$ is degenerated, meaning that the differential $D(\hat{ev}|_S)$ is not surjective (notice that $\hat{ev}|_S$ is an affine map and hence $D(\hat{ev}|_S)$ is a well-defined constant linear map), and this holds for any combinatorial type $\wptr$.
	
We say that $n+1$ distinct points $P_1, \dots, P_n, Q$ are in {\em generic position} if the $n$ points $P_1, \dots, P_n$ are in generic position, and for any $d \leq n$ and any $d$-tuple $(P_{i_1}, \dots, P_{i_d})$, the $(d+1)$-tuple $(P_{i_1}, \dots, P_{i_d}, Q)$ is not lying in the image $\vec{ev}(S)$ of a stratum $S \subset (\overline{\mathfrak{M}}_d^{\wptr} (\mathcal{P},\Sigma))$ over which the evaluation map $\vec{ev}$ is degenerated and this holds for any combinatorial type $\wptr$.
\end{definition}


\begin{definition}\label{def:maslov_index}
We define the {\em Maslov index} $MI(\wptr)$ of a weighted $d$-pointed $k$-tree $\wptr$ by
$MI(\wptr) = 2 (k - d),$
and the Maslov index $MI(\varsigma)$ of a tropical disk $\varsigma$ to be that of its combinatorial type $\wptr$.
\end{definition}

\begin{lemma}[Lemma 2.6 in \cite{Gross10}]\label{lem:moduli_space_dimension}
If $P_1,\dots,P_n, Q$ are in generic position and $MI(\wptr) = 2r$, then $\mathfrak{M}_d^{\wptr} (\mathcal{P},\Sigma,P_1,\dots,P_n;Q)$ is an $(r-1)$-dimensional (over $\real$) affine linear subspace of $\overline{\mathfrak{M}}_d^{\wptr}(\mathcal{P},\Sigma) \setminus \partial\overline{\mathfrak{M}}_d^{\wptr}(\mathcal{P},\Sigma)$; in particular, $\mathfrak{M}_d^{\wptr} (\mathcal{P},\Sigma,P_1,\dots,P_n;Q) = \emptyset$ when $r \leq 0$.
	
If $P_1, \dots, P_n$ are in generic position and $MI(\wptr) = 2r$, then $\mathfrak{M}_d^{\wptr} (\mathcal{P},\Sigma,P_1,\dots,P_n)/\real_+$ is an $r$-dimensional (over $\real$) affine linear subspace of $\hat{\mathfrak{M}}_d^{\wptr} (\mathcal{P},\Sigma) \setminus \partial\hat{\mathfrak{M}}_d^{\wptr} (\mathcal{P},\Sigma)$; in particular, we have $\mathfrak{M}_d^{\wptr} (\mathcal{P},\Sigma,P_1,\dots,P_n) /\real_+ = \emptyset$ when $r < 0$.
\end{lemma}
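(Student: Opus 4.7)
The plan is to reduce the claim to linear-algebraic dimension counting on an explicit affine space, using the generic-position hypothesis to guarantee transversality. First I would fix a combinatorial type $\wptr \in \wrtree{k}{d}$ and invoke the parametrization
$\overline{\mathfrak{M}}_d^{\wptr}(\mathcal{P},\Sigma) \cong (\real_{\leq 0})^{|\wptr^{[1]}|} \times M_\real$
already recorded in the excerpt. A short Euler-characteristic count on the underlying $(k+d)$-tree --- $k+d$ univalent incoming vertices, one univalent outgoing vertex, and therefore $V_{\text{tri}} = k+d-1$ trivalent internal vertices --- gives $|\wptr^{[1]}| = k+d-1$, so the top (open) stratum is affine of real dimension $k+d+1$. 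On this stratum, the coordinate $\varsigma(v)$ of any vertex $v$ is obtained by starting at $\varsigma(v_o)$ and summing $-s_e \bar{m}_e$ along the unique directed path from $v_o$ to $v$, so each component of the evaluation map $\vec{ev}$ (resp. $\hat{ev}$) is an affine-linear function of $(\vec{s}, \varsigma(v_o))$ with constant differential.

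Next I would feed in the generic-position hypothesis of Definition \ref{def:generic_position}. It asserts that $(P_{i_1},\dots,P_{i_d},Q)$ lies outside $\vec{ev}(S)$ for every stratum $S$ on which $D(\vec{ev}|_S)$ fails to be surjective; applied to the entire stratification, this forces the preimage to miss every boundary stratum (where some $s_e=0$) and, on the top stratum, guarantees that whenever the fiber is nonempty the constant differential $D\vec{ev}:\real^{k+d+1}\to\real^{2(d+1)}$ is surjective (otherwise its image would be a proper affine subspace that generic tuples cannot hit). Hence the nonempty fiber is a translate of $\ker D\vec{ev}$, and by rank--nullity has real dimension
\[
(k+d+1) - 2(d+1) = k-d-1 = r-1.
\]
When $r\leq 0$ the source dimension $k+d+1$ drops below $2(d+1)$, so the image of $\vec{ev}$ is a proper affine subspace and generic tuples are not hit, giving emptiness. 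The second assertion follows by the identical argument after dropping the $ev_o$ factor and quotienting by the outgoing $\real_+$-action: $\hat{\mathfrak{M}}_d^{\wptr}(\mathcal{P},\Sigma)$ is affine of real dimension $k+d$, and transversality of $\hat{ev}:\hat{\mathfrak{M}}_d^{\wptr}\to M_\real^d$ yields fiber dimension $(k+d)-2d=r$, with emptiness when $r<0$.

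The step I expect to be the main obstacle is justifying rigorously that the preimage lies entirely in the top stratum, which requires identifying each boundary stratum of $\overline{\mathfrak{M}}_d^{\wptr}(\mathcal{P},\Sigma)$ with the open stratum of some $\overline{\mathfrak{M}}_d^{\wptr'}(\mathcal{P},\Sigma)$ for a simpler combinatorial type $\wptr'$ obtained by contracting the degenerated edges and merging their endpoint vertices into possibly higher-valency ones, and then applying the generic-position hypothesis to $\wptr'$. Since $\wrtree{k}{d}$ is finite for fixed $k,d$, this check is a finite combinatorial verification; once it is in place, the remainder of the argument is pure linear algebra and follows Lemma 2.6 in \cite{Gross10} with the bookkeeping adjusted for our outgoing-edge and marked-point conventions.
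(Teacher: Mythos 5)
The paper supplies no proof of this lemma; it is quoted verbatim from \cite{Gross10} (Lemma~2.6 there), so there is no proof in the text to compare against. Your core argument is the right one and the arithmetic checks out: $|\wptr^{[1]}|=k+d-1$, so the top stratum of $\overline{\mathfrak{M}}_d^{\wptr}(\mathcal{P},\Sigma)$ is affine of real dimension $k+d+1=2d+r+1$; since $\vec{ev}$ (resp.\ $\hat{ev}$) is affine with constant differential into $M_\real^{d+1}$ (resp.\ $M_\real^d$), rank--nullity gives fiber dimension $r-1$ (resp.\ $r$) when the differential is onto, and Definition~\ref{def:generic_position} forces the fiber to be empty otherwise; for $r\leq 0$ (resp.\ $r<0$) the source dimension is strictly smaller than the target, so the degenerate case always applies and the moduli space is empty.

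The gap is in your claim that generic position ``forces the preimage to miss every boundary stratum.'' Definition~\ref{def:generic_position} excludes the chosen tuple only from $\vec{ev}(S)$ for those strata $S$ on which $D(\vec{ev}|_S)$ is \emph{not} surjective; a codimension-$j$ boundary stratum has source dimension $k+d+1-j$, which forces non-surjectivity (hence exclusion) only when $j\geq r$. For $1\leq j<r$ nothing prevents the tuple from lying in $\vec{ev}(S)$, and the affine fiber can genuinely cross such a face: take $d=0$, $k=2$, so $r=2$ and $\overline{\mathfrak{M}}_0^{\wptr}(\mathcal{P},\Sigma)\cong\real_{\leq0}\times M_\real$ with $\vec{ev}=ev_o$ the projection to $M_\real$; the fiber over any $Q$ is $\real_{\leq0}\times\{Q\}$, which meets $\{s_{e_o}=0\}$. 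This does not damage the dimension count (the interior piece is still relatively open of dimension $r-1$), and it is vacuous in the cases $r\in\{0,1\}$ the paper actually uses, but the deduction as you state it does not follow from the definition. Your proposed remedy --- viewing a boundary stratum of $\overline{\mathfrak{M}}_d^{\wptr}(\mathcal{P},\Sigma)$ as the open stratum of some $\overline{\mathfrak{M}}_d^{\wptr'}(\mathcal{P},\Sigma)$ for a contracted type $\wptr'$ --- is both unnecessary (Definition~\ref{def:generic_position} already quantifies over all strata of $\overline{\mathfrak{M}}_d^{\wptr}$ directly, so no translation to a different combinatorial type is needed) and ill-posed in this paper's setup, since contracting an internal edge produces a vertex of valence at least $4$, which is not allowed in a weighted $d$-pointed $k$-tree per Definition~\ref{def:weighted_tree}, so there is no $\overline{\mathfrak{M}}_d^{\wptr'}$ to appeal to. A small notational slip: the lemma concerns $\wptr\in\wptree{k}{d}$, not $\wrtree{k}{d}$.
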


\subsection{The perturbed Landau-Ginzburg potential}
We now define an $n$-pointed LG potential as a perturbation of the Hori-Vafa mirror family $(\check{\mathcal{X}},W)$ by tropical disk counts, following \cite{Gross10}.

\begin{definition}\label{def:mono_LG_potential}
Given a tropical disk $\varsigma \in \mathfrak{M}_d^{\wptr}(\mathcal{P}, \Sigma, P_1, \dots, P_n ; Q)$ with $MI(\varsigma) =2 $, the {\em monomial associated to $\varsigma$} is defined by
$$\text{Mono}(\varsigma) := \text{Mult}(\wptr) z^{m_{\wptr}} u_{\wptr} \in \comp[\mathcal{P}],$$
where $m_{\wptr} \in \mathcal{P}$ is the weight and $u_\wptr$ is the monomial associated to the unique outgoing edge $e_o$ of $\varsigma$ as in Definition \ref{def:weighted_tree}.
\end{definition}

\begin{definition}[cf. Definition 2.7 in \cite{Gross10}]\label{def:LG_potential}
Fixing the points $P_1, \dots, P_n, Q$ in generic position, we define the {\em $n$-pointed Landau-Ginzburg (LG) potential} as
$$W_n(Q) := \sum_{\varsigma} \text{Mono}(\varsigma),$$
where the sum is over all Maslov index $2$ tropical disks $\varsigma$ in $(P_1, \dots, P_n; Q)$.
\end{definition}

Note that the $0$-pointed LG potential $W_0(Q) = W$ is precisely the Hori-Vafa potential, so the $n$-pointed potential $W_n(Q)$ is indeed a deformation (in the formal variables $u_i$'s) of $W$.

\subsection{Scattering diagram from the Maslov index $0$ disks}
According to \cite{Gross10, gross2011tropical}, the dependence of the $n$-pointed LG potential $W_n(Q)$ on $Q$ is governed by a scattering diagram constructed from the Maslov index $0$ tropical disks. Here we recall the definition of scattering diagrams from \cite[Section 3]{kwchan-leung-ma} with slight modifications; the original definition was due to Kontsevich-Soibelman \cite{kontsevich-soibelman04} and can be found in \cite{gross2010tropical}.

\subsubsection{Tropical vertex group}
We consider $\comp[\mathcal{P}] \otimes_\inte N$, whose general elements are finite linear combinations of elements of the form $z^{m} \otimes \check{\partial}_n$ (here $\check{\partial}_n$ is a holomorphic vector field on $\check{X}$ associated to $n\in N$ to be defined in \eqref{holo_vector_field_2}).  We also define the Lie-bracket $[\cdot,\cdot]$ on $\comp[\mathcal{P}] \otimes_\inte N$ by the formula:
\begin{equation}\label{vertex_lie_algebra}
\left[ z^m \otimes \check{\partial}_n , z^{m'} \otimes \check{\partial}_{n'} \right]= z^{m+m'} \check{\partial}_{( \bar{m}',n ) n' - ( \bar{m} ,n' ) n},
\end{equation}
where $(\cdot,\cdot)$ is the natural pairing between $M$ and $N$. We consider the Lie algebra
$\mathfrak{g} := \left(\comp[\mathcal{P}] \otimes_\inte N \right) \otimes_\comp R_n,$
where $R_n$ is the formal power series ring $R_n$ in Notation \ref{not:deformation_ring} equipped with its maximal ideal $\mathbf{m}$.

\begin{definition}[\cite{gross2010tropical}]\label{trop_lie_algebra}
The {\em tropical Lie-algebra} over $R = R_n$ is defined to be the nilpotent Lie subalgebra $\mathfrak{h} \hookrightarrow \mathfrak{g}$ given explicitly by
$\big(\bigoplus_{m \in \mathcal{P} \setminus \{0\}} \comp \cdot z^m \otimes_\inte (m^\perp)\big) \otimes_\comp R \rightarrow \mathfrak{g}.$
The {\em tropical vertex group} is defined as the exponential group of $\mathfrak{h}$.
\end{definition}

\begin{definition}
Given $m \in \mathcal{P} \setminus\{0\}$ and $n \in m^\perp$, we let
$\mathfrak{h}_{m,n} := (\comp[z^m]\cdot z^m)\check{\partial}_n \otimes_\comp \mathbf{m}  \hookrightarrow \mathfrak{h},$
whose general elements are of the form
$\sum_{k \geq 1} \sum_{I} a_{k,I} z^{km}\check{\partial}_{n} u_I,$
where $I \subset \{1,\dots,n\}$. This defines an abelian Lie subalgebra of $\mathfrak{h}$ by \eqref{vertex_lie_algebra}.
\end{definition}

\begin{definition}\label{wall}
A {\em wall} $\mathbf{w}$ over $R$ is a triple $(m, Q, \Theta)$, where
\begin{itemize}
\item
$m \in \mathcal{P} \setminus \{0\}$ such that $\bar{m}$ parallel to $Q$,
\item
$Q$, called the {\em support} of $\mathbf{w}$, is a connected oriented codimension one convex tropical polyhedral subset of $M_\real$ \footnote{It means a connected convex subset locally defined by affine linear equations and inequalities defined over $\mathbb{Q}$.},
\item
$\Theta \in \exp(\mathfrak{h}_{m,n_Q})$, where $n_Q \in  N$ is the unique primitive element satisfying $n_Q \in (TQ)^\perp$ and $(\nu_Q, n) < 0$, and $\nu_Q \in M_\real$ here is a vector normal to $Q$ such that the orientation of $TQ \oplus \real \cdot \nu_Q$ agrees with that of $M_\real$.
\end{itemize}
\end{definition}

\begin{definition}\label{scattering_diagram_def}
A {\em scattering diagram} $\mathscr{D}$ over $R = R_n$ is a finite set of walls $\left\{ ( m_\alpha, Q_\alpha, \Theta_\alpha) \right\}_{\alpha }$.
\end{definition}

\begin{notation}\label{not:support_of_diagram}
For a scattering diagram $\mathscr{D}$, its {\em support} is defined as
$\text{supp}(\mathscr{D}) := \bigcup_{\mathbf{w} \in \mathscr{D}} Q_{\mathbf{w}},$
and its {\em singular set} as
$\text{Sing}(\mathscr{D}) := \bigcup_{\mathbf{w} \in \mathscr{D}} \partial Q_{\mathbf{w}} \cup \bigcup_{\mathbf{w}_1\pitchfork \mathbf{w}_2} \left(Q_{\mathbf{w}_1} \cap Q_{\mathbf{w}_2}\right),$
where $\mathbf{w}_1 \pitchfork \mathbf{w}_2$ means transversally intersecting walls.
\end{notation}

\subsubsection{Path ordered products}\label{analytic_continuation}

An embedded path
$\gamma: [0,1] \rightarrow B_0 \setminus \text{Sing}(\mathscr{D})$
is said to be {\em intersecting $\mathscr{D}$ generically}
if $\gamma(0), \gamma(1) \notin \text{supp}(\mathscr{D})$, $\text{Im}(\gamma) \cap \text{Sing}(\mathscr{D}) = \emptyset$ and it intersects all the walls in $\mathscr{D}$ transversally.
Given such an embedded path $\gamma$ with a sequence of real numbers
$0=t_0 < t_1< t_2<\cdots<t_s<t_{s+1} = 1$
such that $\left\{ \gamma(t_1), \ldots, \gamma(t_s) \right\} = \gamma \cap \text{supp}(\mathscr{D})$, we define the {\em path ordered product} along $\gamma$, denoted by
$$\Theta_{\gamma, \mathscr{D}} := \Theta_{\gamma(t_s)}^{\sigma_s}\cdots \Theta_{\gamma(t_i)}^{\sigma_i} \cdots \Theta_{\gamma(t_1)}^{\sigma_1}$$
to be the product of the wall crossing factors $\Theta_{\gamma(t_j)}$'s according to the direction of the path $\gamma$ following \cite{gross2010tropical}, where $\sigma_j = 1$ if if orientation of $P_{i,j} \oplus \real \cdot \gamma'(t_i)$ agree with that of $M_\real$ and $\sigma_j = -1$ otherwise. For more details, we refer readers to \cite[Section 3.2.1.]{kwchan-leung-ma}.

\begin{definition}\label{consistent_def}
A scattering diagram $\mathscr{D}$ is said to be {\em consistent} if we have
$\Theta_{\gamma,\mathscr{D}} = \text{Id},$
for any embedded loop $\gamma$ intersecting $\mathscr{D}$ generically. Two scattering diagrams $\mathscr{D}$ and $\tilde{\mathscr{D}}$ are said to be {\em equivalent} if
$\Theta_{\gamma,\mathscr{D}} = \Theta_{\gamma,\tilde{\mathscr{D}}}$
for any embedded path $\gamma$ intersecting both $\mathscr{D}$ and $\tilde{\mathscr{D}}$ generically.
\end{definition}

\subsubsection{Maslov index $0$ tropical disks}\label{sec:scattering_diagram}

\begin{definition}\label{def:MI_zero_disk_diagram}
We define $\mathscr{D}(\mathcal{P},\Sigma,P_1,\dots,P_n)$ to be the scattering diagram which consists of walls
$$\mathbf{w}_\wptr = (m_\wptr, Q_\wptr, \Theta_\wptr)$$
for each weighted $d$-pointed $k$-tree $\wptr$ with $MI(\wptr) = 0$ and $\mathfrak{M}_d^{\wptr} (\mathcal{P},\Sigma,P_1,\dots,P_n)/\real_+ \neq \emptyset$, where
\begin{enumerate}
\item
the ray $Q_\wptr \subset M_\real$ is given by the closure of the image of $ev_o: \mathfrak{M}_d^{\wptr} (\mathcal{P},\Sigma,P_1,\dots,P_n) \rightarrow M_{\real}$ at the outgoing vertex $v_o$ (i.e. the locus of the stop of a tropical disk $\varsigma$),
\item
the Fourier mode $m_\wptr = m_\varsigma$ is the weight associated to the outgoing edge $e_o$ attached to the unique outgoing vertex $v_o$, and
\item
the wall-crossing automorphism $\Theta_\wptr$ is given by the formula
$\text{Log}(\Theta_\wptr) = k_{\wptr} \text{Mult}(\wptr) z^{m_{\wptr}} \check{\partial}_{n_{\wptr}} u_\wptr,$
where $k_{\wptr}$ is introduced in Notation \ref{not:integer_weight}, $u_{\wptr}$ is defined as in Definition \ref{def:mono_LG_potential}, and $n_{\wptr} \in N$ is the clockwise primitive normal to $Q_\wptr$.
\end{enumerate}
\end{definition}

We end this section by stating two of the main results in \cite{Gross10} which describe how the perturbed LG potential $W_n(Q)$ jumps across the walls in the scattering diagram $\mathscr{D}(\mathcal{P},\Sigma,P_1,\dots,P_n)$:

\begin{theorem}[Proposition 4.7 and Theorem 4.12 in \cite{Gross10}]\label{thm:superpotential_wall_crossing}
For any point $\mathfrak{j} \in \text{Sing}(\mathscr{D}) \setminus \{P_1,\dots,P_n\}$ and any loop $\gamma_\mathfrak{j}$ around $\mathfrak{j}$ in a sufficiently small contractible neighborhood $U_\mathfrak{j}$ of $\mathfrak{j}$, we have
$$\Theta_{\gamma_\mathfrak{j},\mathscr{D}}  = \text{Id}.$$
Furthermore, if $Q,Q' \in M_\real$ are not lying on any walls in $\mathscr{D}(\mathcal{P},\Sigma,P_1,\dots,P_n)$, then we have
$$
W_n(Q') = \Theta_{\gamma,\mathscr{D}}(W_n(Q)),
$$
for any path $\gamma \subset M_\real \setminus \text{Sing}(\mathscr{D})$ joining $Q$ to $Q'$.
\end{theorem}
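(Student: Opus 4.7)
The plan is to leverage Theorem \ref{thm:main_theorem_intro_1} together with the results of \cite{kwchan-leung-ma}, following the argument outlined in Section \ref{sec:wall_crossing_formula}. First I would construct a Maurer-Cartan solution $\Phi = \incoming + \varXi^{0,0} + \varXi^{1,1}$ to the extended equation \eqref{eqn:extended_MC_equation_intro} using the input $\incoming$ in \eqref{eqn:introduction_input}. All subsequent arguments are local on suitably chosen contractible open subsets $U \subset M_\real \setminus \{P_1,\dots,P_n\}$, so that $\incoming|_U = 0$ in the quotient dgBV algebra $(\mathcal{G}/\mathcal{I})^{*,*}$ and the extended MC equation splits, via the bigrading, as
\begin{align*}
\bar{\partial}\varXi^{1,1} + \tfrac{1}{2}[\varXi^{1,1},\varXi^{1,1}] &= 0,\\
\bar{\partial}\varXi^{0,0} + [W,\varXi^{1,1}] + [\varXi^{0,0},\varXi^{1,1}] &= 0.
\end{align*}

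For the first assertion, I would fix a joint $\mathfrak{j} \in \text{Sing}(\mathscr{D})\setminus \{P_1,\dots,P_n\}$ and a small contractible neighborhood $U_\mathfrak{j}$ containing no $P_i$. The first equation displayed above shows that $\varXi^{1,1}|_{U_\mathfrak{j}}$ is an honest MC element of the non-extended Kodaira-Spencer complex. By Theorem \ref{thm:main_theorem_intro_1}, its leading-order terms produce exactly the walls $\mathbf{w}_\wptr$ of $\mathscr{D}(\mathcal{P},\Sigma,P_1,\dots,P_n)$ meeting $\mathfrak{j}$, with the correct wall-crossing elements $\Theta_\wptr$. The consistency statement $\Theta_{\gamma_\mathfrak{j},\mathscr{D}} = \text{Id}$ then follows from \cite[Theorems 1.5 and 1.6]{kwchan-leung-ma}, which assert precisely this consistency for the scattering diagram produced by the tree-summation procedure used to build $\varXi^{1,1}$.

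For the wall-crossing formula, I would first establish it across a single wall. Take a contractible $U$ intersecting only one wall $Q_\wptr$ and write $U = U_- \sqcup (Q_\wptr \cap U) \sqcup U_+$ as in Figure \ref{fig:delta_integrate_to_gauge}. Define the gauge $\facs$ by \eqref{eqn:wall_crossing_factor}; by results from \cite[Section 4]{kwchan-leung-ma} it is the unique gauge vanishing on $U_-$ and realising the gauge-equivalence $e^{\text{ad}_\facs}\bar{\partial}e^{-\text{ad}_\facs} = \bar{\partial} + [\varXi^{1,1},\cdot]$. The second MC component above is equivalent to $(\bar{\partial} + [\varXi^{1,1},\cdot])(W + \varXi^{0,0}) = 0$, i.e.\ $W + \varXi^{0,0}$ is holomorphic for the twisted Dolbeault operator. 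Applying the gauge, the function $e^{-\text{ad}_\facs}(W + \varXi^{0,0})$ is a single, $\bar{\partial}$-holomorphic function on $U$ whose chamber-wise expressions are $W_{n,-}$ on $U_-$ and $\Theta_\wptr^{-1}(W_{n,+})$ on $U_+$, giving $\Theta_\wptr(W_{n,-}) = W_{n,+}$.

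Finally, for a general path $\gamma \subset M_\real \setminus \text{Sing}(\mathscr{D})$ joining $Q$ to $Q'$, I would cover $\gamma$ by finitely many contractible open sets each crossing at most one wall, compose the single-wall identities in order along $\gamma$, and invoke the first part of the theorem to conclude that the resulting product is independent of the generic representative of the homotopy class; this yields \eqref{eqn:wall_crossing_formula}. The main obstacle I anticipate is checking that the chamber values of $W + \varXi^{0,0}$ genuinely coincide with Gross's tropical potential $W_n(Q)$ evaluated in the corresponding chamber, i.e.\ that the semiclassical limit $\hp \to 0$ of the terms produced by Theorem \ref{thm:main_theorem_intro_1} is compatible with the boundary condition $\facs|_{U_-} = 0$ which pins down on which side of the wall the ``new'' disks are to be counted; this amounts to verifying that the normalisation conventions for the asymptotic forms $\alpha_\wptr$ match the sign conventions used to orient $\Theta_\wptr$.
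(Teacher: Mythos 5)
Your proposal is correct and follows essentially the same route as the paper's argument in Section \ref{sec:wall_crossing_formula}: split the extended MC equation by bidegree on a contractible set avoiding the $P_i$, invoke \cite[Theorems 1.5 and 1.6]{kwchan-leung-ma} for consistency at joints, realize $W+\varXi^{0,0}$ as holomorphic for the $\varXi^{1,1}$-twisted operator, and undo the twist with the step-function gauge $\facs$ from \eqref{eqn:wall_crossing_factor} to obtain $\Theta_\wptr(W_{n,-}) = W_{n,+}$. The ``obstacle'' you flag — matching the chamber-wise values of $W+\varXi^{0,0}$ with Gross's $W_n(Q)$ — is precisely what the semiclassical normalizations $\lim_{\hp\to 0}\alpha_\wptr|_x = 1$ and $\lim_{\hp\to 0}\int_\varrho \alpha_\wptr = -1$ in Theorem \ref{thm:main_theorem} are there to supply, so it is handled by that theorem rather than being an open issue.
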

As we have seen in the introduction, the main result of this paper (i.e. Theorem \ref{thm:main_theorem_intro_1}) combined with the main results of \cite{kwchan-leung-ma} can give new interpretations and alternative proofs of these two results.

\section{Extended deformation theory of the LG model}\label{sec:MC_equation}

In this section, we investigate the dgBV algebra governing the extended deformation theory of the LG model $(\check{\mathcal{X}},W)$ and the asymptotic behavior of the Maurer-Cartan solutions when the torus fibers of the fibration $\check{p}: \check{X} \rightarrow \text{Int}(\mathbf{P})$ shrink, building on the techniques developed in \cite{kwchan-leung-ma}.

\subsection{The dgBV algebra coming from polyvector fields}\label{sec:BV_algebra}

Given a LG model $(\check{X}, W)$ equipped with a holomorphic volume form $\check{\Omega}$, one can construct a natural differential graded Batalin-Vilkovisky (dgBV) algebra on the Dolbeault resolution of the sheaf of polyvector fields on $\check{X}$ given by $PV^{i,j}(\check{X}) := \Omega^{0,j}(\check{X},\wedge^iT^{1,0}_{\check{X}}),$
where the degree on $PV^{i,j}(\check{X})$ is taken to be $j-i$. We briefly review this construction; see, e.g. \cite{li2013primitive}.

\begin{notation}
Given local holomorphic coordinates $u^1,\dots,u^n$ on $\check{X}$ and an ordered subset $I = \{i_1,\dots,i_k\} \subset \{1,\dots,n\}$, we set
$d\bar{u}^I := d\bar{u}^{i_1}\wedge\cdots\wedge d\bar{u}^{i_k},\ \partial_I := \dd{u^{i_1}} \wedge \cdots \wedge \dd{u^{i_k}},$
and similarly for $du^I$ and $\bar{\partial}_I$.
\end{notation}


First of all, the space of smooth sections of $\bigwedge^* T^{1,0}_{\check{X}}$ is equipped with a natural wedge product $\wedge$. With a holomorphic volume form $\check{\Omega} =  e^{f} du^1 \cdots du^n$ and a polyvector field of the form $\partial_I$ where $I = \{i_1,\dots,i_k\}$, we define
$\partial_I \dashv \check{\Omega} := \iota_{\dd{u^{i_1}}}\cdots \iota_{\dd{u^{i_k}}} \check{\Omega}.$

\begin{definition}\label{BVdifferential}
The {\em BV differential} $\bvd_{\check{\Omega}}$ is defined by\footnote{We will suppress the dependence of the BV differential on $\check{\Omega}$ whenever there is no danger of confusion.}
\begin{equation}
\bvd_{\check{\Omega}} \alpha \dashv\check{\Omega} : = \partial(\alpha \dashv \check{\Omega}).
\end{equation}
\end{definition}

The operation $\delta_v : \bigwedge^* T^{1,0} \rightarrow \bigwedge^{*-1} T^{1,0}$ defined by
\begin{equation}\label{BVderivation}
\delta_v(w) : = \bvd(v\wedge w ) - \bvd(v) \wedge w -(-1)^{k} v\wedge \bvd(w)
\end{equation}
is a derivation of degree $k+1$.

\begin{definition}
We define the {\em bracket} $[\cdot,\cdot] : V \otimes V \rightarrow V$ by $[v,w] = (-1)^{|v|+1}\delta_v(w),$ where $|v|$ stands for the degree of a homogeneous element $v$.\footnote{The bracket $[\cdot,\cdot]$ agrees with the well-known {\em Schouten-Nijenhuis Lie bracket} on smooth sections of $\bigwedge^* T^{1,0}$ which can be expressed as
$[v_1\wedge \dots \wedge v_{k}, \mathsf{v}_1\wedge \dots \wedge \mathsf{v}_{k'}] = \sum_{\substack{1 \leq i \leq k \\ 1 \leq j \leq k'}} (-1)^{i+j} [v_i, \mathsf{v}_j]\wedge  v_1 \wedge \dots\wedge  \widehat{v}_i\wedge  \dots \wedge v_k \wedge \dots \widehat{\mathsf{v}}_j \wedge \dots \wedge \mathsf{v}_{k'}$ and
$[v_1\wedge \dots \wedge v_k , f ]  = \sum_{i} (-1)^{k-i} v_i(f) v_1 \wedge \dots \wedge \widehat{v_i} \wedge \dots v_k$.}
\end{definition}

These structures can be extended to the Dolbeault resolution $PV^{*,*}(\check{X})$ of $\bigwedge^* T^{1,0}$ equipped with the twisted differential $\pdb_W = \pdb + [W,\cdot]$ and the graded commutative wedge product $\wedge$. In the local holomorphic coordinates $u^1, \dots, u^n$, writing $\alpha = \alpha^I_J d\bar{u}^J \wedge \partial_{I}$ (with $|I| =i$ and $|J|=j$) and $\beta = \beta^K_L d\bar{u}^L \wedge \partial_{K}$ (with $|K| = k$ and $|L| = l$), we have
\begin{align*}
\pdb(\alpha)  = \pdb(\alpha^I_J)d\bar{u}^J \wedge \partial_{I}; \quad
&\bvd (\alpha)  = (-1)^{j} d\bar{u}^J \wedge \bvd (\alpha^I_J\partial_{I});\\
\alpha \wedge \beta  = (-1)^{il} \alpha^I_J \beta^K_L d\bar{u}^J \wedge d\bar{u}^L \wedge \partial_{I} \wedge\partial_K; \quad
&[\alpha,\beta]  = (-1)^{(i+1)l} d\bar{u}^Jd\bar{u}^L [\alpha^I_J \partial_{I},\beta^K_L\partial_{K}].
\end{align*}

From these we obtain the differential graded Lie algebra (dgLa) $(PV^{*,*}[1],[\cdot,\cdot],\pdb_W)$, where $[1]$ is a degree shift. We will study the asymptotic behavior of solutions of the Maurer-Cartan equation \eqref{eqn:extended_MC_equation_intro} for degree $0$ elements $\varphi$ in $PV^{*,*}\otimes_\comp \widehat{\comp[Q]}_{\mathcal{S}} \otimes_\comp \mathbf{m}_n$. 

Going back to our situation, by extending the exact sequence of monoids \eqref{monoid_sequence} to the associated abelian groups, we get the so-called {\em fan sequence} in toric geometry \cite{CLS_toric_book, Fulton_toric_book}:
\begin{equation}\label{eqn:group_sequence}
\xymatrix{0 \ar[r] & \mathcal{Q}^{gp}  \ar[r]  & \mathcal{P}^{gp}  \ar[r]^{\theta} & M  \ar[r] & 0}.
\end{equation}
We have $\mathcal{P}^{gp} \cong \mathbb{Z}^{|\Sigma(1)|} \cong M \times \mathcal{Q}^{gp}$, giving a trivialization of the family over $\text{Spec}(\comp[\mathcal{Q}^{gp}]) \subset \text{Spec}(\comp[\mathcal{Q}])$ as
\begin{equation}
\check{\mathcal{X}} \times_{\text{Spec}(\comp[\mathcal{Q}])} \text{Spec}(\comp[\mathcal{Q}^{gp}]) \cong T_N \times \text{Spec}(\comp[\mathcal{Q}^{gp}]),
\end{equation}
where $T_N := \left(N \otimes_\inte \comp\right) / N$ is a 2-dimensional algebraic torus.

Since $\mathcal{Q}$ is a strictly convex polyhedral cone, there is a natural maximal ideal $ \mathbf{m} = \mathbf{m}_\mathcal{Q} := \langle z^{m} \mid m \in \mathcal{Q} \setminus \{0\} \rangle$ in $\comp[\mathcal{Q}]$. We consider the completion $\widehat{\comp[\mathcal{Q}]} : = \varprojlim_{k} \comp[\mathcal{Q}]/ \mathbf{m}^{k}$ and its localization $\widehat{\comp[\mathcal{Q}]}_{\mathcal{S}}$ at the multiplicative system $\mathcal{S} = \{z^{m} \mid m \in \mathcal{Q} \setminus \{0\}\}$. By taking the tensor product $\comp[\mathcal{P}^{gp}] \otimes_{\comp[\mathcal{Q}^{gp}]}  \widehat{\comp[\mathcal{Q}]}_{\mathcal{S}} = \comp[M] \otimes_{\comp} \widehat{\comp[\mathcal{Q}]}_{\mathcal{S}}$, we can treat $W \in \comp[M] \otimes_{\comp} \widehat{\comp[\mathcal{Q}]}_{\mathcal{S}}$ as a family of LG potentials parametrized by $\widehat{\comp[\mathcal{Q}]}_{\mathcal{S}}$ on the (fixed) algebraic torus $T_N$. For the LG model $(\check{X}, W) := (T_N, W)$, we choose the local holomorphic coordinates as follows:
\begin{notation}\label{not:holomorphic_coordinates}
We fix, once and for all, a $\inte$-basis $e_1, e_2$ for $M$ and identify $\mathsf{m} = \mathsf{m}_1 e_1 + \mathsf{m}_2 e_2$ with $(\mathsf{m}_1,\mathsf{m}_2) \in \inte^2$. We also use $\bmc^{\mathsf{m}} = (\bmc^1)^{\mathsf{m}_1} (\bmc^2)^{\mathsf{m}_2}$, for $\mathsf{m} = (\mathsf{m}_1,\mathsf{m}_2) \in M$, to denote a monomial on $\check{X}$. Notice that every $\mathsf{m} \in M$ naturally gives a $(1,0)$-form $d \log (\mathsf{m}):= d \log (\bmc^{\mathsf{m}})$; similarly, every $n \in N$ naturally gives a vector field $\check{\partial}_n$ satisfying
$\check{\partial}_n (\bmc^\mathsf{m}) = (n , \mathsf{m}) \bmc^{\mathsf{m}},$
where $( \cdot, \cdot )$ is the natural pairing between $M$ and $N$.
\end{notation}
Equipped with the natural holomorphic volume form $\check{\Omega}: = d \log \bmc^1 \wedge d\log \bmc^2$ on $\check{X}$, we obtain the triple $(\check{X}, W, \check{\Omega})$, and hence a dgBV algebra by the above discussion.

\subsubsection{$\hp$-family of SYZ fibrations}\label{sec:semi_flat_family}

Following a proposal by Kontsevich-Soibelman \cite{Kontsevich-Soibelman01} and Fukaya \cite{fukaya05}, we consider an $\hp$-family of SYZ fibrations which corresponds to a large complex structure limit, so that we can apply asymptotic analysis as in the previous work \cite{kwchan-leung-ma}.

We consider the log map $\text{Log} : T_N \cong (N_\comp / N) \rightarrow \sqrt{-1}N_\real$ which is naturally a torus fibration.
We fix a symplectic structure $\omega_0$ on the toric surface $X_\Sigma$ and consider the associated moment polytope $\mathbf{P} \subset \sqrt{-1}N_\real$.
From the SYZ viewpoint \cite{syz96, Chan-Leung10a}, the mirror manifold is obtained by dualizing the moment map on $X_\Sigma$, so we choose the base of the SYZ fibration to be $\check{B}_0 := \text{Int}(\mathbf{P})$ and take $\check{X} =  \check{p}^{-1}(\check{B}_0)$ instead of the whole algebraic torus $T_N$.

Let $\{e^1,e^2\}$ be the $\mathbb{Z}$-basis of $N$ dual to the chosen basis $\{e_1, e_2\}$ of $M$.
We then let $(x^1,x^2)$ be the oriented affine coordinates of $\check{B}_0$ with respect to the basis $\{e^1,e^2\}$ and $(y^1,y^2)$ be the affine coordinates on the torus fibers of $\check{p}:\check{X} \to \check{B}_0$.

Associated to the symplectic structure $\omega_0$, there is a symplectic potential $\check{\phi}$ in the action-angle coordinates written explicitly in \cite{GU94}. We take $\check{\phi}$ and apply the Legendre transform
$\check{L}_{\check{\phi}}: \text{Int}(\mathbf{P}) \rightarrow M_\real$
to obtain the dual integral affine manifold $B_0$ equipped with affine coordinates
$x_1 := \ddd{\check{\phi}}{x^1}, \ x_2 := \ddd{\check{\phi}}{x^2}.$
We prefer to work with the affine manifold $B_0$ because then we can deal with tropical trees instead of Morse trees, as explained in \cite{gross2011real} (see also \cite[Section 2]{kwchan-leung-ma}).

We then introduce a small $\hp > 0$ parameter to rescale the affine coordinates on $\check{B}_0$ as $x^j \mapsto \hp^{-1} x^j$, and obtain the ($\hp$-dependent) holomorphic coordinates $\bmc^j = \exp(- 2\pi i (y^j + i \hp^{-1} x^j ))$ (cf. \cite[Section 2]{kwchan-leung-ma}). Under these $\hp$-twisted coordinates, the holomorphic vector field $\check{\partial}_{j}$ is explicitly given by
\begin{equation}\label{holo_vector_field_2}
n = (n^j) \mapsto \check{\partial_n}  := \sum_j n^j \check{\partial}_{j} = \sum_j n^j \dd{\log \bmc^j} = \frac{i}{4\pi} \sum_j n^j \left( \dd{y^j}- i\hp \sum_k   g_{jk} \dd{x_k} \right).
\end{equation}
The corresponding $\hp$-dependent dgLa of polyvector fields will be denoted by $PV^{*,*}_\hp$. We will consider differential forms on $B_0$ depending on $\hp$ and hence we introduce the following:
\begin{notation}\label{hp_diff_forms}
We use $\Omega^*_\hp(B_0)$ (similarly for $\Omega^*_\hp(U)$ for any open subset $U \subset B_0$) to denote the space of smooth sections of $\bigwedge^*T^*B_0$ over $B_0 \times \real_{>0}$, where the extra $\real_{>0}$ direction is parametrized by $\hp$.
\end{notation}

\subsubsection{Fourier expansions of polyvector fields}
Recall that the Fourier transform $\hat{\mathcal{F}}$
\begin{equation}\label{eqn:Fourier_transform}
\hat{\mathcal{F}} :\mathbf{G}^{*,*}_n := \left( \bigoplus_{m \in \mathcal{P}}\Omega^*_{\hp}(B_0) z^{m}\right) \otimes_{\inte} \wedge^*N \otimes_\comp R_n \hookrightarrow PV^{*,*}_\hp \otimes_\comp \widehat{\comp[\mathcal{Q}]}_{\mathcal{S}} \otimes_\comp R_n,
\end{equation}
introduced in \cite[Section 2]{kwchan-leung-ma}, gives an inclusion of dg Lie subalgebras by\footnote{It is an inclusion since we restrict ourselves to Fourier modes $m$ in $\mathcal{P}$ and we take finite sums instead of infinite Fourier series.}
\begin{enumerate}
\item
identifying the Fourier modes $z^{m} \in \comp[\mathcal{P}] \hookrightarrow \comp[\mathcal{P}^{gp}]$ as $z^{m} = \bmc^{\bar{m}} \otimes q^{\hat{m}} \in \comp[M]\otimes_\comp \widehat{\comp[\mathcal{Q}]}_\mathcal{S}$ through the isomorphism $ \comp[\mathcal{P}^{gp}] \cong \comp[M]\otimes_\comp \widehat{\comp[\mathcal{Q}]}_\mathcal{S}$,

\item
pulling back smooth functions $f(x,\hp)$ on $B_0$ to $\check{X}$ via the equation $\hat{\mathcal{F}}(f(x,\hp)) = \check{p}^{-1}(f(x,\hp))$ and using the torus fibration $\check{p}:\check{X} \rightarrow B_0$,

\item
identifying the 1-form $dx^j = \sum_{k} \pdpd{\phi}{x_j}{x_k} dx_k$ (where $\phi: B_0 \rightarrow \real$ is the Legendre dual to $\check{\phi}$) on $B_0$ with the $(0,1)$-form $\hat{\mathcal{F}}(dx^j)  = \frac{\hp}{4\pi} d\log \bar{\bmc}^j$ on $\check{X}$ for $j = 1, 2$,

\item
identifying $n\in N$ with the holomorphic vector field $\check{\partial}_n$ on $\check{X}$ by \eqref{holo_vector_field_2}, and

\item
extending the map skew-symmetrically.
\end{enumerate}

The Dolbeaut differential $\bar{\partial}$ is identified with the deRham differential $d$ acting on each summand $\Omega^*_{\hp}(B_0)$ via $\hat{\mathcal{F}}$. The action of a vector field $\check{\partial}_n = (n^1,n^2)$ on $f(x,\hp)$ by differentiation is identified as
\begin{equation}\label{eqn:holo_vector_field_action}
\check{\partial}_n (f) =  \frac{\hp}{4\pi} \sum_{j, k} n^j \pdpd{\check{\phi}}{x^j}{x^k} \dd{x_k} (f)
\end{equation}
via $\hat{\mathcal{F}}$ (recall that $x^1, x^2$ are affine coordinates on $\check{B}_0 \cong \text{Int}(\mathbf{P})$ while $x_1, x_2$ are affine coordinates on $B_0 = M_\real$).

\subsection{Differential forms with asymptotic support}\label{sec:asymptotic_support}

We will work with a dgLa constructed as a suitable quotient of a subalgebra of $\mathbf{G}^{*,*}_n$ (defined above in \eqref{eqn:Fourier_transform}), which turns out to be directly related to the tropical counting defined in Section \ref{sec:counting}. In order to do so, we need to recall the notion of {\em asymptotic support on a closed codimension $k$ tropical polyhedral subset $P\subset U$} for some {\em convex} $U\subset B_0$ which describes the behavior of differential forms $\alpha \in \Omega^*_\hp(B_0)$ as $\hp \rightarrow 0$ and also some of its basic properties from \cite{kwchan-leung-ma}.

First of all, by a {\em tropical polyhedral subset} in $U$ we mean a connected convex subset which is defined by {\em finitely many} affine linear equations or inequalities over $\mathbb{Q}$. For the purpose of proving the main theorem in this paper, we only need the cases when $P$ is either a point (whence $\text{dim}_\real (P) = 0$) or a ray/line (whence $\text{dim}_\real(P) = 1$) or a polyhedral domain (whence $\text{dim}_\real(P) = 2$). However, since the new properties established in this subsection should be of independent interest and useful in a broader context, we will work with a convex open subset $U\subset B_0$ in a general (oriented) affine manifold $B_0$ in arbitrary dimensions.

\begin{definition}\label{exponential_decay}
We define $\mathcal{W}^{-\infty}_k(U) \subset \Omega^k_\hp(U)$ to be the set of differential $k$-forms $\alpha \in \Omega^k_\hp(U)$ such that for each point $q \in U$, there exists a neighborhood $V$ of $q$ where we have
$\|\nabla^j \alpha\|_{L^\infty(V)} \leq D_{j,V} e^{-c_V/\hp}$
for some constants $c_V$ and $D_{j,V}$. The association $U \mapsto \mathcal{W}^{-\infty}_k(U)$ defines a sheaf over $B_0$ which we denote by $\mathcal{W}^{-\infty}_k$.
\end{definition}

We also need differential forms which only blow up at polynomial orders in $\hp^{-1}$:
\begin{definition}\label{polynomial_growth}
We define $\mathcal{W}^{\infty}_k(U) \subset \Omega^k_\hp(U)$ to be the set of differential $k$-forms $\alpha \in \Omega^k_\hp(U)$ such that for each point $q \in U$, there exists a neighborhood $V$ of $q$ where we have
$\|\nabla^j \alpha\|_{L^\infty(V)} \leq D_{j,V} \hp^{-N_{j,V}}$
for some constants $D_{j,V}$ and $N_{j,V} \in \inte_{>0}$. The association $U \mapsto \mathcal{W}^{\infty}_k(U)$ defines a sheaf over $B_0$ which we denote by $\mathcal{W}^{\infty}_k$.
\end{definition}

Notice that the sheaves $\mathcal{W}^{\pm \infty}_k$ in Definitions \ref{exponential_decay} and \ref{polynomial_growth} are closed under the actions of $\nabla_{\dd{x}}$, the deRham differential $d$ and the wedge product of differential forms. We also observe the fact that $\mathcal{W}^{-\infty}_k$ is a differential graded ideal of $\mathcal{W}^{\infty}_k$. In particular, we can consider the sheaf of differential graded algebras $\mathcal{W}^{\infty}_*/\mathcal{W}^{-\infty}_*$, equipped with the deRham differential.

\begin{definition}\label{def:asypmtotic_support_pre}
A differential $k$-form $\alpha \in \mathcal{W}^{\infty}_k(U)$ is said to have {\em asymptotic support on a closed codimension $k$ tropical polyhedral subset $P \subset U$ with weight $s$}, denoted by $\alpha \in \mathcal{W}_{P}^s$ if the following conditions are satisfied:
\begin{enumerate}
\item
For any $p \in U \setminus P$, there is a neighborhood $V \subset U \setminus P$ of $p$ such that $\alpha|_V \in \mathcal{W}^{-\infty}_k(V)$ on V.
		
\item
There exists a neighborhood $W_P$ of $P$ in $U$ such that we can write
$\alpha =  h(x,\hp) \nu_P + \eta,$
where $\nu_P \in \bigwedge^k N_{\real}$ is the unique affine $k$-form which is normal to $P$, $h(x,\hp) \in C^\infty(W_P \times \real_{>0})$ and $\eta$ is an error term satisfying $\eta \in \mathcal{W}^{-\infty}_k(W_P)$ on $W_P$.
		
\item
For any $p \in P$, there exists a sufficiently small convex neighborhood $V$ containing $p$ equipped with an affine coordinate system $x = (x_1,\dots x_n)$ such that $x' := (x_1, \dots, x_k)$ parametrizes codimension $k$ affine linear subspaces of $V$ parallel to $P$, with $x' = 0$ corresponding to the subspace containing $P$. Within the foliation $\{(P_{V, x'})\}_{x' \in N_V}$ where $P_{V,x'} = \{ (x_1,\dots,x_n) \in V  \ | \ (x_1,\cdots,x_k) = x' \}$ of $V$, we require that, for all $j \in \inte_{\geq 0}$ and multi-index $\beta = (\beta_1,\dots,\beta_k) \in \inte_{\geq 0}^k$, the estimate
\begin{equation}\label{estimate_order_s}
\int_{x'}  (x')^\beta \left(\sup_{P_{V,x'}}|\nabla^j (\iota_{\nu_P^\vee} \alpha)| \right) \nu_P \leq D_{j,V,\beta} \hp^{-\frac{j+s-|\beta|-k}{2}},
\end{equation}
for some constant $D_{j,V,\beta}$ and some $s \in \inte$, where $|\beta| = \sum_l \beta_l$ is the vanishing order of the monomial $(x')^{\beta}= x_1^{\beta_1}\cdots x_k^{\beta_k}$ along $P_{x'=0}$ and $\nu_P^\vee = \dd{x_1}\wedge\cdots \wedge\dd{x_k}$ in this local coordinate.
\end{enumerate}
\end{definition}

\begin{remark}
Note that condition (3) in Definition \ref{def:asypmtotic_support_pre} is independent of the choices of the convex neighborhood $V$, the transversal slice $N_V$ and the local affine coordinates $x = (x_1,\dots x_n)$ (although the constant $D_{j,V,\beta}$ may depend on these choices). Therefore this condition can be checked simply by choosing a sufficiently nice neighborhood $V$ at every point $p \in P$.
\end{remark}

By definition, we have the nice property that
\begin{equation}\label{asy_support_basic_property}
(x')^{\beta} \nabla_{\dd{x_{l_1}}}\cdots \nabla_{\dd{x_{l_j}}} \mathcal{W}^s_P(U) \subset \mathcal{W}^{s+j-|\beta|}_P(U)
\end{equation}
for any affine monomial $(x')^{\beta}$ with vanishing order $|\beta|$ along $P$.

The weight $s$ in Definition \ref{def:asypmtotic_support_pre} defines the following filtration (the $U$ dependence will be dropped whenever it is clear from the context):\footnote{Note that the degree $k$ of the differential forms has to be equal to the codimension of $P$. Also note that the sets $\mathcal{W}^{\pm \infty}_k(U)$ are independent of the choice of $P$.}
\begin{equation}\label{filtration}
\mathcal{W}^{-\infty}_k \cdots \subset \mathcal{W}^{-s}_P\subset \cdots \mathcal{W}^{-1}_P\subset \mathcal{W}^0_P \subset \mathcal{W}^1_P \subset \mathcal{W}^2_P\subset \cdots \subset \mathcal{W}^s_P \subset \cdots \subset \mathcal{W}^{\infty}_k \subset \Omega^k_\hp(U).
\end{equation}
This filtration keeps track of the polynomial orders of $\hp$ for differential $k$-forms with asymptotic support on $P$ and provides a convenient tool for expressing and proving results in asymptotic analysis.

\subsubsection{Behavior under $d$ and $\wedge$}
\begin{definition}\label{def:asymptotic_support}\label{def:asy_support_algebra}
A differential $k$-form $\alpha$ is said to be in $\tilde{\mathcal{W}}^{s}_k(U)$ if it there exist finitely many polyhedral subsets $P_1, \dots, P_l$ of codimension $k$ such that $\alpha \in \sum_{j=1}^l \mathcal{W}^s_{P_j}(U)$; if we further have $d \alpha \in \tilde{\mathcal{W}}^{s+1}_{k+1}(U)$, then we say $\alpha$ is in $\mathcal{W}^s_k(U)$. We also let $\mathcal{W}^s_*(U) := \bigoplus_k \mathcal{W}^{s+k}_k(U)$ for every $s \in \inte$.
\end{definition}

We have the following lemma on the compatibility between the filtration and the wedge product:
\begin{lemma}\label{lem:support_product}
For two closed tropical polyhedral subsets $P_1, P_2 \subset U$ of codimension $k_1, k_2$ respectively, we have $\mathcal{W}^s_{P_1}(U) \wedge \mathcal{W}^r_{P_2}(U) \subset \mathcal{W}^{r+s}_P(U)$ for any codimension $k_1+k_2$ polyhedral subset $P$ containing $P_1 \cap P_2$ normal to $\nu_{P_1} \wedge \nu_{P_2}$ if they intersect transversally (in particular if $\text{codim}_\real(P_1\cap P_2) = k_1+k_2$ we can take $P = P_1 \cap P_2$), and $\mathcal{W}^s_{P_1}(U) \wedge \mathcal{W}^r_{P_2}(U) \subset \mathcal{W}^{-\infty}_{k_1 + k_2}(U)$ if their intersection is not transversal.
Furthermore, we have $\mathcal{W}^{s_1}_{k_1}(U) \wedge \mathcal{W}^{s_2}_{k_2}(U) \subset \mathcal{W}^{s_1+s_2}_{k_1+k_2}(U)$. Hence $\mathcal{W}^0_*(U) \subset \mathcal{W}^{\infty}_*(U)$ is a dg subalgebra and $\mathcal{W}^{-1}_*(U) \subset \mathcal{W}^0_*(U)$ is a dg ideal of $\mathcal{W}^0_*(U)$, under the operations $d$ and $\wedge$.
\end{lemma}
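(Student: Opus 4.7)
The heart of the lemma is the first assertion about $\mathcal{W}^s_{P_1}(U)\wedge \mathcal{W}^r_{P_2}(U)$; once this is in place, all remaining statements follow by linearity, Leibniz, and bookkeeping with the grading $\mathcal{W}^s_*(U):=\bigoplus_k \mathcal{W}^{s+k}_k(U)$. The strategy is to reduce to a local computation on an adapted coordinate patch and then bound the key weighted integral via Fubini.

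For the first assertion, fix $\alpha\in\mathcal{W}^s_{P_1}$ and $\beta\in\mathcal{W}^r_{P_2}$. Outside $W_{P_1}\cap W_{P_2}$ at least one factor lies in $\mathcal{W}^{-\infty}_*$, so $\alpha\wedge\beta$ is exponentially small there, verifying condition (1) of Definition \ref{def:asypmtotic_support_pre} for $P$. On $W_{P_1}\cap W_{P_2}$ write $\alpha=h_1\,\nu_{P_1}+\eta_1$ and $\beta=h_2\,\nu_{P_2}+\eta_2$ with $\eta_j\in\mathcal{W}^{-\infty}_{k_j}$. Then
\[
\alpha\wedge\beta=h_1h_2\,\nu_{P_1}\wedge\nu_{P_2}+(\text{terms involving }\eta_1\text{ or }\eta_2).
\]
In the non-transverse case the conormal lines $(T P_1)^\perp$ and $(T P_2)^\perp$ share a direction, so $\nu_{P_1}\wedge\nu_{P_2}=0$; the remaining terms pair an at-worst polynomial-growth function against an exponentially small form, giving $\alpha\wedge\beta\in\mathcal{W}^{-\infty}_{k_1+k_2}(W_{P_1}\cap W_{P_2})$. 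In the transverse case, choose local affine coordinates on a neighborhood $V$ of a point of $P$ so that $P_1=\{x=0\}$, $P_2=\{y=0\}$, with $x=(x_1,\dots,x_{k_1})$ and $y=(y_1,\dots,y_{k_2})$, and remaining coordinates $z$. Then $\nu_{P_1}\wedge\nu_{P_2}=dx\wedge dy=\nu_P$, so the leading term is $h_1h_2\,\nu_P$, and condition (2) is verified. For condition (3), the foliation leaf $P_{V,(x,y)}$ is contained in both $P_{1,V,x}$ and $P_{2,V,y}$, so
\[
\sup_{P_{V,(x,y)}}|\nabla^j(h_1h_2)|\le \sum_{j_1+j_2=j}\tbinom{j}{j_1}\Bigl(\sup_{P_{1,V,x}}|\nabla^{j_1}h_1|\Bigr)\Bigl(\sup_{P_{2,V,y}}|\nabla^{j_2}h_2|\Bigr).
\]
Splitting the weight multi-index as $(x,y)^\gamma=x^{\beta}y^{\beta'}$ and applying Fubini, the integral appearing in \eqref{estimate_order_s} factorizes into one over $x$ and one over $y$, and the product of the individual bounds yields the weight $-(j+(s+r)-|\gamma|-(k_1+k_2))/2$. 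This is precisely the estimate needed to conclude $\alpha\wedge\beta\in\mathcal{W}^{s+r}_P$; the error terms involving $\eta_1$ or $\eta_2$ contribute only to $\mathcal{W}^{-\infty}_{k_1+k_2}$ and are absorbed.

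Having established the per-polyhedron statement, the inclusion $\mathcal{W}^{s_1}_{k_1}\wedge \mathcal{W}^{s_2}_{k_2}\subset \mathcal{W}^{s_1+s_2}_{k_1+k_2}$ follows by expanding each factor as a finite sum of pieces supported on individual polyhedra, wedging pairwise, and using that a sum of forms with asymptotic support on various codimension $(k_1+k_2)$ polyhedra lies in $\tilde{\mathcal{W}}^{s_1+s_2}_{k_1+k_2}$. The closure under $d$ required by Definition \ref{def:asymptotic_support} comes from the Leibniz rule $d(\alpha\wedge\beta)=d\alpha\wedge\beta+(-1)^{|\alpha|}\alpha\wedge d\beta$, together with $d\alpha\in\tilde{\mathcal{W}}^{s_1+1}_{k_1+1}$ and $d\beta\in\tilde{\mathcal{W}}^{s_2+1}_{k_2+1}$, which when combined with the product estimate just proved give $d(\alpha\wedge\beta)\in\tilde{\mathcal{W}}^{s_1+s_2+1}_{k_1+k_2+1}$. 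Specializing the grading, $\mathcal{W}^0_*(U)=\bigoplus_k \mathcal{W}^k_k(U)$ is closed under $\wedge$ and $d$, hence is a dg subalgebra of $\mathcal{W}^\infty_*(U)$, while $\mathcal{W}^{-1}_*(U)=\bigoplus_k\mathcal{W}^{k-1}_k(U)$ absorbs $\mathcal{W}^0_*(U)$ under $\wedge$ and is preserved by $d$, so it is a dg ideal.

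The main obstacle is the weighted integral estimate in the transverse case: one must check that using the larger leaves $P_{1,V,x}$ and $P_{2,V,y}$ in place of the intersection leaf $P_{V,(x,y)}$ does not lose any powers of $\hp$, and that the Leibniz expansion of $\nabla^j(h_1h_2)$ combines with Fubini to produce exactly the weight $s+r$ with the correct dependence on $|\gamma|$ and $k_1+k_2$. The remaining subtlety is choosing the coordinate patch uniformly at boundary points of $P$, which is handled by the convexity assumption on $V$ and the remark following Definition \ref{def:asypmtotic_support_pre} that condition (3) is independent of such choices up to the constants $D_{j,V,\beta}$.
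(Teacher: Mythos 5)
Your proof is correct. For the first assertion --- the technical heart of the lemma --- the paper gives no argument at all: it simply cites Lemma 4.22 of \cite{kwchan-leung-ma} and then derives the remaining claims from it. You have instead produced a self-contained proof, and the mechanism you use (decompose each factor as leading part $h_i\nu_{P_i}$ plus exponentially small error near the respective support; observe $\nu_{P_1}\wedge\nu_{P_2}=0$ in the non-transverse case since the conormals share a direction; in the transverse case, choose adapted coordinates $(x,y,z)$ so that the leaf $P_{V,(x,y)}$ is contained in both larger leaves $P_{1,V,x}$ and $P_{2,V,y}$, expand $\nabla^j(h_1h_2)$ by Leibniz, and factorize the weighted integral by Fubini) is precisely what the cited lemma must contain. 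The exponent bookkeeping checks out: $\frac{j_1+s-|\beta|-k_1}{2}+\frac{j_2+r-|\beta'|-k_2}{2}=\frac{j+(s+r)-|\gamma|-(k_1+k_2)}{2}$. Your derivation of the second statement (passing from individual polyhedra to $\tilde{\mathcal{W}}$, then using the Leibniz rule for $d$ together with $d\alpha_i\in\tilde{\mathcal{W}}^{s_i+1}_{k_i+1}$ to verify closure under $d$) matches the paper's argument verbatim, as do the dg-subalgebra and dg-ideal conclusions. One small remark: when $P$ strictly contains $P_1\cap P_2$, near points of $P\setminus(P_1\cap P_2)$ at least one factor is exponentially small, so conditions (2)--(3) of Definition \ref{def:asypmtotic_support_pre} hold there trivially rather than via condition (1); your phrasing elides this but the conclusion is unaffected.
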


Before giving the proof, let us clarify that when we say two closed tropical polyhedral subsets $P_1, P_2 \subset U$ of codimension $k_1, k_2$ are {\em intersecting transversally}, we mean the affine subspaces containing $P_1, P_2$ and of codimension $k_1, k_2$ respectively are intersecting transversally; this applies even to the case when $\partial P_i \neq \emptyset$.

\begin{proof}[Proof of Lemma~\ref{lem:support_product}]
The first statement is nothing but \cite[Lemma 4.22]{kwchan-leung-ma}, which in turn implies the second statement as follows: Given polyhedral subsets $P_1$ and $P_2$ of codimensions $k_1$ and $k_2$ respectively, notice that we always have some polyhedral subset $P$ of codimension $k = k_1 + k_2$ such that $\mathcal{W}^{s_1}_{P_1}(U) \wedge \mathcal{W}^{s_2}_{P_2}(U) \subset \mathcal{W}^{s_1+s_2}_P(U)$. Therefore, we conclude that $\tilde{\mathcal{W}}^{s_1}_{k_1}(U) \wedge \tilde{\mathcal{W}}^{s_2}_{k_2}(U) \subset \tilde{\mathcal{W}}^{s_1+s_2}_{k_1+k_2}(U)$. Now, suppose $\alpha_i \in \mathcal{W}^{s_i}_{k_i}(U)$. Then we have $d\alpha_i \in \tilde{\mathcal{W}}^{s_i+1}_{k_i+1}(U)$ and therefore $(d\alpha_1) \wedge \alpha_2 \in \tilde{\mathcal{W}}^{s_1+s_2+1}_{k_1+k_2+1}(U)$; similar statement holds for $\alpha \wedge (d\alpha_2)$.
Finally, the statements that $\mathcal{W}^0_*(U) \subset \mathcal{W}^{\infty}_*(U)$ is a dg subalgebra and $\mathcal{W}^{-1}_*(U) \subset \mathcal{W}^0_*(U)$ is a dg ideal follow from $\mathcal{W}^{s_1}_{k_1}(U) \wedge \mathcal{W}^{s_2}_{k_2}(U) \subset \mathcal{W}^{s_1+s_2}_{k_1+k_2}(U)$.
\end{proof}

\subsubsection{Behavior under integral operators}\label{sec:integral_lemma}
In this subsection, we study the behavior of $\mathcal{W}^s_P(U)$ under the action of an integral operator $I$, generalizing some of the results in \cite[Section 4.2]{kwchan-leung-ma}. For a given closed tropical polyhedral subset $P \subset U$, we choose a reference tropical hyperplane $R \subset U$ which divide the domain $U$ into $U\setminus R = U_+ \cup U_-$, together with an affine vector field $v$ (meaning $\nabla v = 0$) not tangent to $R$ pointing into $U_+$.

By shrinking $U$ if necessary, we assume that for any point $p \in U$, the unique flow line of $v$ in $U$ passing through $p$ intersects $R$ uniquely at a point $x \in R$.
Then the time-$t$ flow along $v$ defines a diffeomorphism
$\tau : W \rightarrow U,\ (t, x) \mapsto \tau(t,x),$
where $W \subset \real \times R$ is the maximal domain of definition of $\tau$ (namely, for any $x \in R$, there is a maximal time interval $I_x \subset \real$ so that the flow line through $x$ has its image lying inside $U$).
For any point $x \in R$, we denote by $\tau_x(t) := \tau(t,x)$ the flow line of $v$ passing through $x$. Figure \ref{fig:integral_flow} illustrates the situation.

\begin{figure}[h]
\centering
\includegraphics[scale=0.27]{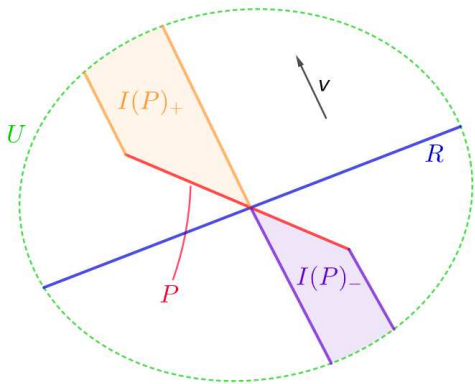}
\caption{The flow along $v$ and $I(P)$}
\label{fig:integral_flow}
\end{figure}

We let $P_\pm = P \cap \overline{U}_{\pm}$ and define
\begin{equation}\label{defining_IP}
I(P)_+ :=( P_+ + \real_{\geq 0} \cdot v ) \cap U; \quad
I(P)_-  : = (P_- + \real_{\leq 0 } \cdot v) \cap U,
\end{equation}
(see again Figure \ref{fig:integral_flow}). We also write $I(P) = I(P)_+ \cup I(P)_-$.
Now we define an integral operator $I$ by
\begin{equation}\label{general_integral_operator}
I(\alpha)(t,x) := \int_{0}^t \iota_{\dd{s}}(\tau^*(\alpha))(s,x) ds.
\end{equation}
Note that $I$ depends on the choice of the tropical hyperplane $R$. We have the following lemma, which is a modification of \cite[Lemma 4.23]{kwchan-leung-ma}:
\begin{lemma}[cf. Lemma 4.23 in \cite{kwchan-leung-ma}]\label{integral_lemma}
For $\alpha \in \mathcal{W}^s_P(U)$, we have $I(\alpha) \in \mathcal{W}^{-\infty}_{k-1}(U)$ if $v$ is tangent to $P$, and $I(\alpha) \in \mathcal{W}^{s-1}_{I(P)_+}(U) + \mathcal{W}^{s-1}_{I(P)_-}(U)$ if $v$ is not tangent to $P$, where $I(P)_\pm$ is defined in \eqref{defining_IP}. Moveover for $\alpha \in \tilde{\mathcal{W}}^s_k(U)$, we have $I(\alpha) \in \tilde{\mathcal{W}}^{s-1}_{k-1}(U)$.
\end{lemma}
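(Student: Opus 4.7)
\emph{Proof strategy.} The plan is to follow and adapt the argument of \cite[Lemma 4.23]{kwchan-leung-ma} to the present setup. We proceed pointwise on $U$, splitting each flow line of $v$ into a piece in an exponential-decay region and, if $v$ is not tangent to $P$, a transverse crossing near $P$; off $P$ the integrand is $\mathcal{W}^{-\infty}$ and integration preserves exponential decay, so only the behavior near $P$ requires analysis.

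\emph{Case when $v$ is tangent to $P$.} The key observation is that $\iota_v \nu_P = 0$, since $\nu_P \in \bigwedge^k N^*_P$ while $v$ lies in the parallel-transported tangent distribution to $P$. Using the local decomposition $\alpha = h(x,\hp)\,\nu_P + \eta$ from Definition \ref{def:asypmtotic_support_pre}(2), we obtain $\iota_v \alpha = \iota_v \eta \in \mathcal{W}^{-\infty}_{k-1}$ on the neighborhood $W_P$ of $P$, while Definition \ref{def:asypmtotic_support_pre}(1) already forces $\iota_v \alpha \in \mathcal{W}^{-\infty}_{k-1}$ off $W_P$. Since integration $\int_0^t(\cdot)\,ds$ along a bounded flow line preserves exponential decay (up to a linear factor in the length of the line), we conclude $I(\alpha) \in \mathcal{W}^{-\infty}_{k-1}(U)$.

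\emph{Case when $v$ is not tangent to $P$.} At each $p \in U$ choose a convex neighborhood $V$ with affine coordinates $(x_1,\dots,x_n)$ as in Definition \ref{def:asypmtotic_support_pre}(3), arranged so that the first $k$ coordinates $x' = (x_1,\dots,x_k)$ are transverse to $P$ with $v$ having nontrivial $\partial/\partial x_1$-component; this is possible because $v \notin TP$. A Fubini decomposition reduces $I(\alpha)$ on $V$ to a one-dimensional integral along $v$-flow lines of $\iota_v\alpha$, whose leading piece is $h(x,\hp)\,\iota_v\nu_P$ modulo $\mathcal{W}^{-\infty}$. The concentration estimate \eqref{estimate_order_s} forces $h$ to be sharply peaked along $\{x' = 0\}$, so its $x_1$-antiderivative behaves like a smoothed step-function supported on one side of the crossing, yielding asymptotic support on $I(P)_+$ or $I(P)_-$ with conormal $\nu_{I(P)}$ of codimension $k-1$. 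The claimed estimate for $I(\alpha)$ takes the form
$$
\int (x'')^\beta \sup_{I(P)_{V,x''}} |\nabla^j \iota_{\nu_{I(P)}^\vee} I(\alpha)|\,\nu_{I(P)} \leq D\,\hp^{-(j+(s-1)-|\beta|-(k-1))/2},
$$
where $x''$ is the codimension-$(k-1)$ transverse parameter to $I(P)_{\pm}$. The crucial point is that the exponent $-(j+s-|\beta|-k)/2$ on the right-hand side matches the one in the weight-$s$ estimate for $\alpha$, so, upon commuting $\nabla^j$ past $\int_0^t ds$ in directions transverse to $v$ (and noting that $\partial_{x_1}$-derivatives simply return the integrand), the verification reduces to the hypothesis $\alpha \in \mathcal{W}^s_P$.

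\emph{Extension to $\tilde{\mathcal{W}}^s_k$ and obstacle.} The third assertion follows by linearity: if $\alpha = \sum_j \alpha_j$ with $\alpha_j \in \mathcal{W}^s_{P_j}$, then
$$
I(\alpha) = \sum_j I(\alpha_j) \in \sum_j\left(\mathcal{W}^{s-1}_{I(P_j)_+} + \mathcal{W}^{s-1}_{I(P_j)_-} + \mathcal{W}^{-\infty}_{k-1}\right) \subset \tilde{\mathcal{W}}^{s-1}_{k-1}(U).
$$
The main technical obstacle will be the bookkeeping of polynomial weights $(x')^\beta$ and higher derivatives $\nabla^j$ as they pass through $\int_0^t ds$; in particular, one must verify that the $(k-1)$-dimensional transverse integration against $(x'')^\beta$ on the $I(P)_\pm$ side indeed reproduces the $(j+s-|\beta|-k)/2$ power of $\hp^{-1}$ without extra losses, and handle carefully the flow lines that skim $\partial P$ or approach $\partial U$, where the neat one-variable picture fails and additional exponential-decay cut-offs must be inserted.
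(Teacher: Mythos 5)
Your overall strategy (the tangent case via $\iota_v\nu_P = 0$, the Fubini/flow-line verification of estimate \eqref{estimate_order_s}, and the linearity argument for $\tilde{\mathcal{W}}^s_k$) matches the cited reference \cite[Lemma 4.23]{kwchan-leung-ma}. However, you have skipped the one genuinely new step that makes the present statement work, and as a result your argument cannot actually land the stated conclusion $I(\alpha) \in \mathcal{W}^{s-1}_{I(P)_+}(U) + \mathcal{W}^{s-1}_{I(P)_-}(U)$.

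The issue is that $I(P)_+ \cup I(P)_-$ is in general a nonconvex chevron when $P$ crosses the reference hyperplane $R$: $I(P)_+$ sweeps $P_+ = P \cap \overline{U}_+$ forward along $v$ and $I(P)_-$ sweeps $P_- = P \cap \overline{U}_-$ backward, and the two pieces meet at $P \cap R$. Since a tropical polyhedral subset is by definition \emph{convex}, $I(\alpha)$ cannot have asymptotic support on this union, and it is certainly not true that the support lies on ``$I(P)_+$ \emph{or} $I(P)_-$'' as your sketch asserts. The conclusion is a \emph{sum} of two terms, and to exhibit such a decomposition one must decompose $\alpha$ itself before integrating. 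The paper does this by introducing Gaussian smoothed step-functions $\chi_\pm(t)$ in the flow parameter $t$, which have asymptotic support of weight $0$ on $\{t \gtrless 0\}$; Lemma \ref{lem:support_product} then gives $\alpha_\pm := \chi_\pm\alpha \in \mathcal{W}^s_{P_\pm}$, reducing to the case $P$ on one side of $R$, where $I(P)_+$ alone is the (convex) support and the estimates run as in \cite[Lemma 4.23]{kwchan-leung-ma}. Without this cut-off decomposition your argument has no mechanism to produce the two-term membership, so it is a genuine gap rather than a matter of bookkeeping.
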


\begin{proof}
We only describe the modifications needed in order to adapt the proof of \cite[Lemma 4.23]{kwchan-leung-ma}. We introduce a decomposition $\alpha = \alpha_+ + \alpha_-$ of $\alpha$, where the components $\alpha_+$ and $\alpha_-$ have asymptotic support of the same weight on $P_+$ and $P_-$ respectively, using cut-offs as follows. First we consider the functions depending only on the $t$-coordinate given by
$$
\chi_{+}(t)  := \left( \frac{1}{\hp \pi} \right)^{\half} \int_{-\infty}^{t}e^{-\frac{s^2}{\hp}} ds, \quad
\chi_{-}(t)  := 1 - \chi_{+}(t) = \left( \frac{1}{\hp \pi} \right)^{\half} \int_{t}^{\infty}e^{-\frac{s^2}{\hp}} ds;
$$
they have asymptotic support with weight $0$ on $U_+ = \{t \geq 0 \} \cap U$ and $U_- = \{t \leq 0\} \cap U$ respectively. Lemma \ref{lem:support_product} implies that the cut-offs
$\alpha_\pm:= \chi_{\pm} \alpha$
have asymptotic support with the same weight $s$ on $U_\pm \cap P$ respectively.
Therefore we may start by assuming $\alpha \in \mathcal{W}^{s}_P(U)$ with $P \subset \overline{U}_+$ and we simply write $I(P)$ to stand for $I(P)_+$. The rest of the proof is essentially the same as that of \cite[Lemma 4.23]{kwchan-leung-ma}.
\end{proof}

In order to understand the effect of $I$ on $\mathcal{W}^s_k(U)$, we need the following Lemmas \ref{lem:inclusion_lemma} and \ref{lem:projection_lemma} which describe the behavior of $\mathcal{W}^s_P(U)$ under pullbacks. Following the notations in Lemma \ref{integral_lemma}, we consider the tropical hypersurface $\mathbf{i}: R \subset U$ with an affine projection $\mathbf{p} : U \rightarrow R$ (which are explicitly given by the $\mathbf{i}(x) = (0,x)$ and $\mathbf{p}(t,x) = x$ using the affine coordinates given by $\tau$).

\begin{lemma}\label{lem:inclusion_lemma}
For $\alpha \in \mathcal{W}^s_P(U)$, we have $\mathbf{i}^*(\alpha) \in \mathcal{W}^s_{Q}(R)$ if $P$ intersects $R$ transversally
and $Q$ is any polyhedral subset of $R$ of codimension $k$ ($ = \text{codim}_\real(P \subset U)$) which contains $P\cap R$ and is normal to $\mathbf{i}^*(\nu_P)$, and $\mathbf{i}^*(\alpha) \in \mathcal{W}^{-\infty}_k(U)$ if $P$ does not intersect $R$ transversally. Moveover, the pull back gives a map $\mathbf{i}^* : \mathcal{W}^s_k(U) \rightarrow \mathcal{W}^s_k(R)$.
\end{lemma}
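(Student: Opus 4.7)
The plan is to verify the three defining conditions in Definition \ref{def:asypmtotic_support_pre} by choosing affine coordinates on $U$ that are simultaneously adapted to both $P$ and $R$, and then reduce each condition on $\mathbf{i}^*\alpha$ to the corresponding condition that $\alpha$ already satisfies on $U$.

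Assume first that $P$ and $R$ intersect transversally. Fix $p \in P \cap R$ and start with any affine coordinate system $(x_1, \dots, x_n)$ on a convex neighborhood $V \subset U$ of $p$ in which $P = \{x_1 = \cdots = x_k = 0\}$, so that $\nu_P = dx_1 \wedge \cdots \wedge dx_k$. Transversality of $R$ with $P$ forces the conormal covector of $R$ to have a nonzero component along some $dx_j$ with $j > k$; after permuting indices we may assume this is $j = k+1$. Performing the affine change of variable $\tilde{x}_{k+1} := \ell(x)$, where $\ell$ is a defining equation of $R$ (keeping all other coordinates unchanged), we obtain adapted coordinates in which $R = \{\tilde{x}_{k+1} = 0\}$ and $P$ still has the same form. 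Then $Q := P \cap R = \{x_1 = \cdots = x_k = 0\}$ inside $R$, has codimension $k$ in $R$, and $\mathbf{i}^*\nu_P = dx_1 \wedge \cdots \wedge dx_k$ is a nonzero multiple of $\nu_Q$ as required.

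Now I would check the three conditions on $\mathbf{i}^*\alpha$ near $Q$. For condition (1), any point $q \in R \setminus Q$ lies in $U \setminus P$ (since $Q = R \cap P$), so exponential decay of $\alpha$ on a neighborhood of $q$ in $U$ pulls back to exponential decay of $\mathbf{i}^*\alpha$ on a neighborhood in $R$. For condition (2), writing $\alpha = h\, \nu_P + \eta$ with $\eta \in \mathcal{W}^{-\infty}_k$ on some neighborhood $W_P$, the pullback gives $\mathbf{i}^*\alpha = \mathbf{i}^*(h)\, \nu_Q + \mathbf{i}^*\eta$ on $W_P \cap R$, and $\mathbf{i}^*\eta$ is again exponentially decaying. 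For condition (3), the foliation on $R$ with leaves $Q_{V,x'} = \{x_1 = x'_1, \dots, x_k = x'_k\} \cap R$ is the restriction of the foliation $\{P_{V,x'}\}$ on $V$, so $Q_{V,x'} \subset P_{V,x'}$ for every $x'$; taking supremum over the smaller set and pulling back the inclusion $\nu_Q^\vee = \mathbf{i}^* \nu_P^\vee$ at the level of contractions, we get the pointwise bound
\begin{equation*}
\sup_{Q_{V,x'}} \bigl|\nabla^j (\iota_{\nu_Q^\vee} \mathbf{i}^*\alpha)\bigr| \;\leq\; \sup_{P_{V,x'}} \bigl|\nabla^j (\iota_{\nu_P^\vee} \alpha)\bigr| + (\text{exponentially small}).
\end{equation*}
Integrating against $(x')^\beta \nu_Q$ and using the estimate \eqref{estimate_order_s} for $\alpha$ gives exactly the required bound with the same weight $s$, so $\mathbf{i}^*\alpha \in \mathcal{W}^s_Q(R)$.

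For the non-transversal case, a point-by-point analysis of $P \cap R$ shows that locally either $P \cap R = \emptyset$ or $R \supset P$. In the first situation, $\mathbf{i}^*\alpha$ only samples $\alpha$ away from $P$, where $\alpha \in \mathcal{W}^{-\infty}_k$ by condition (1), so $\mathbf{i}^*\alpha \in \mathcal{W}^{-\infty}_k(R)$. In the second situation, in the coordinates above we have $T_pR \supset T_pP$, which forces the covector cutting out $R$ to lie in $\mathrm{span}(dx_1, \dots, dx_k)$; a linear change within the first $k$ coordinates puts $R = \{x_1 = 0\}$, and then $\mathbf{i}^*\nu_P = \mathbf{i}^*(dx_1 \wedge \cdots \wedge dx_k) = 0$. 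Hence $\mathbf{i}^*\alpha = \mathbf{i}^*\eta \in \mathcal{W}^{-\infty}_k(R)$ using the decomposition of condition (2).

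Finally, for the moreover statement, any $\alpha \in \mathcal{W}^s_k(U) \subset \tilde{\mathcal{W}}^s_k(U)$ is a finite sum $\alpha = \sum_j \alpha_j$ with $\alpha_j \in \mathcal{W}^s_{P_j}(U)$; by the two cases above $\mathbf{i}^*\alpha \in \tilde{\mathcal{W}}^s_k(R)$, and since $d$ commutes with pullback we have $d\mathbf{i}^*\alpha = \mathbf{i}^* d\alpha \in \tilde{\mathcal{W}}^{s+1}_{k+1}(R)$ because $d\alpha \in \tilde{\mathcal{W}}^{s+1}_{k+1}(U)$ and the first part of the lemma (applied in codimension $k+1$) gives the membership in $\tilde{\mathcal{W}}^{s+1}_{k+1}(R)$. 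Thus $\mathbf{i}^*: \mathcal{W}^s_k(U) \to \mathcal{W}^s_k(R)$ as claimed.

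The main obstacle is simply the bookkeeping in step (3): one must make sure that the adapted coordinates on $U$ really can be chosen so that the normal slice of $P$ in $U$ restricts to the normal slice of $Q$ in $R$, and that the weighted integral estimate \eqref{estimate_order_s} transports without any loss in the power of $\hp$. This is exactly what the transversality assumption buys; without it, the pullback of $\nu_P$ to $R$ is either zero or the support $P$ is locally missed by $R$, and both scenarios collapse $\mathbf{i}^*\alpha$ into the exponentially decaying ideal.
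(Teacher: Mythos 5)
Your proof is correct and follows essentially the same route as the paper's: verify the three conditions of Definition~\ref{def:asypmtotic_support_pre} by choosing coordinates adapted to both $P$ and $R$, using the inequality $\sup_{P_{V,x'}\cap R}|\nabla^j(\iota_{\nu_P^\vee}\alpha)| \leq \sup_{P_{V,x'}}|\nabla^j(\iota_{\nu_P^\vee}\alpha)|$ for condition~(3), and deducing $\mathbf{i}^*\alpha \in \mathcal{W}^{-\infty}_k(R)$ in the non-transversal case from $\mathbf{i}^*(\nu_P)=0$ (or from condition~(1) when $P\cap R=\emptyset$). The only omission relative to the statement is that you restrict to $Q = P\cap R$ rather than arbitrary codimension-$k$ polyhedral $Q\supset P\cap R$ normal to $\mathbf{i}^*\nu_P$, but this is handled by the same observation you already make for condition~(1): points of $Q\setminus(P\cap R)$ lie in $U\setminus P$, hence contribute only exponentially small terms, so conditions~(2) and~(3) hold for such larger $Q$ as well.
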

\begin{proof}
We begin by showing the corresponding statement for $\mathcal{W}^s_P(U)$. First, we verify condition (1) of Definition \ref{def:asypmtotic_support_pre}. Suppose that $p \in R \setminus P$, then we can find a neighborhood $V$ of $p$ in $U \setminus P$ such that $\alpha|_V \in \mathcal{W}^{-\infty}_k(V)$ from the assumption that $\alpha \in \mathcal{W}^{s}_P(U)$. Therefore $\alpha|_{V \cap R} \in \mathcal{W}^{-\infty}_k(V\cap R)$.
	
For condition (2) of Definition \ref{def:asypmtotic_support_pre}, we first assume that $P$ and $R$ are not intersecting transversally. We notice that there is a neighborhood $W_P$ of $P$ such that $\alpha$ can be written as $h(x,\hp) \nu_P + \eta$ in $W_P$ from the assumption that $\alpha \in \mathcal{W}^{s}_P(U)$. Therefore we have $\mathbf{i}^*(\nu_P) = 0$ if the intersection is not transversal, and so $\mathbf{i}^*(\alpha) \in \mathcal{W}^{-\infty}_k(R)$. Suppose $P$ and $R$ intersect transversally, then we can take $W_{P\cap R} := \mathbf{i}^{-1}(W_P)$, and we will have
$\mathbf{i}^{*}(\alpha)|_{W_{P\cap R}} = \mathbf{i}^*(h) \mathbf{i}^{*}(\nu_P) + \mathbf{i}^*(\eta)$
in $W_{P\cap R}$ with $\mathbf{i}^*(\nu_P)$ being the volume form of normal bundle of $\mathbf{i}^{-1}(P)$ as desired for condition (2). Notice that if $Q \neq R \cap P$ and for any point $p \notin R \cap P$, there is a neighborhood $V$ of $p$ such that $\alpha|_{V \cap R} \in \mathcal{W}^{-\infty}_k(V\cap R)$ from our earlier discussion, and therefore condition (2) still holds for arbitrary such $Q$.
	
For condition (3), we consider a point $p \in R \cap P$ with affine coordinates
$(x_1,\dots,x_k,x_{k+1},\dots,x_n) \in (-\delta,\delta)^n$
in $V \subset U$ such that $R \cap V = \{x_n = 0\}$, and $x' = (x_1,\dots,x_k)$ are parametrizing the parallel foliation $\{P_{V,x'}\}_{x'\in (-\delta,\delta)^k}$ to $P$ in $V$. Then $\{P_{V,x'} \cap R\}_{x' \in (-\delta,\delta)^k}$ is the foliation parallel to $P \cap R$ in $V \cap R$. Using the fact that $\sup_{P_{V,x'} \cap R}|\nabla^j(\iota_{\nu_{P}^\vee}\alpha)| \leq \sup_{P_{V,x'}}|\nabla^j(\iota_{\nu_{P}^\vee}\alpha)|$, we have
\begin{align*}
\int_{x'\in N_V} (x')^\beta \left(\sup_{P_{V,x'} \cap R}|\nabla^j (\iota_{\nu_P^\vee} \alpha)| \right) \nu_P
\leq \int_{x'\in N_V} (x')^\beta \left(\sup_{P_{V,x'}}|\nabla^j (\iota_{\nu_P^\vee} \alpha)| \right) \nu_P \leq D_{j,V,\beta} \hp^{-\frac{j+s-|\beta|-k}{2}},
\end{align*}
which is the desired estimate for condition (3) of Definition \ref{def:asypmtotic_support_pre}.
	
The statement that $\mathbf{i}^*$ is a map from $\mathcal{W}^s_k(U)$ to $\mathcal{W}^s_k(R)$ is a direct consequence of the first statement.
\end{proof}

\begin{lemma}\label{lem:projection_lemma}
For $\alpha \in \mathcal{W}^s_{P}(R)$, we have $\mathbf{p}^* (\alpha) \in \mathcal{W}^s_{\mathbf{p}^{-1}(P)}(U)$. Moreover, the pull back gives a map $\mathbf{p}^* : \mathcal{W}^s_k(R) \rightarrow \mathcal{W}^s_k(U)$.
\end{lemma}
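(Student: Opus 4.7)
The plan is to verify the three conditions of Definition \ref{def:asypmtotic_support_pre} for $\mathbf{p}^*(\alpha)$ with respect to the polyhedral subset $\mathbf{p}^{-1}(P) \subset U$. Since $\mathbf{p}$ is an affine submersion with $R$ a hypersurface, $\mathbf{p}^{-1}(P)$ is a closed tropical polyhedral subset of $U$ of the same codimension $k$, so its codimension matches the degree of the $k$-form $\mathbf{p}^*(\alpha)$ as required.

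First I would dispose of conditions (1) and (2), which are essentially automatic. For (1), any $p \in U \setminus \mathbf{p}^{-1}(P)$ projects to $\mathbf{p}(p) \in R \setminus P$, so I pick a neighborhood $V' \subset R \setminus P$ of $\mathbf{p}(p)$ on which $\alpha|_{V'} \in \mathcal{W}^{-\infty}_k(V')$ and take $V$ to be any bounded neighborhood of $p$ inside $\mathbf{p}^{-1}(V')$; the exponential decay estimates transfer to $\mathbf{p}^*(\alpha)|_V$ because $\mathbf{p}$ is affine with constant differential. For (2), I would take the neighborhood $W_P \subset R$ supplied by the assumption $\alpha \in \mathcal{W}^s_P(R)$ with decomposition $\alpha = h\,\nu_P + \eta$, set $W_{\mathbf{p}^{-1}(P)} := \mathbf{p}^{-1}(W_P)$, and use that $\mathbf{p}^*(\nu_P)$ is a covariant constant conormal top form for $\mathbf{p}^{-1}(P)$, hence agrees up to a constant with $\nu_{\mathbf{p}^{-1}(P)}$; the splitting $\mathbf{p}^*(\alpha) = \mathbf{p}^*(h)\,\nu_{\mathbf{p}^{-1}(P)} + \mathbf{p}^*(\eta)$ with $\mathbf{p}^*(\eta) \in \mathcal{W}^{-\infty}_k(W_{\mathbf{p}^{-1}(P)})$ then yields exactly condition (2).

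The main content is the weighted estimate in condition (3), and the key trick is to choose coordinates adapted simultaneously to $\mathbf{p}$ and to the parallel foliation of $P$. At a point $p \in \mathbf{p}^{-1}(P)$ I will pick a sufficiently small convex neighborhood $V$ with affine coordinates $(x_1, \ldots, x_{n-1}, t)$ such that $R \cap V = \{t = 0\}$, $\mathbf{p}$ is the projection that forgets $t$, and $x' = (x_1, \ldots, x_k)$ parametrizes the foliation of $V \cap R$ parallel to $P$ and simultaneously the foliation of $V$ parallel to $\mathbf{p}^{-1}(P)$. Under this choice $\mathbf{p}^*(\nu_P^\vee) = \partial_{x_1} \wedge \cdots \wedge \partial_{x_k} = \nu_{\mathbf{p}^{-1}(P)}^\vee$, so $\iota_{\nu_{\mathbf{p}^{-1}(P)}^\vee}\mathbf{p}^*(\alpha) = \mathbf{p}^*(\iota_{\nu_P^\vee}\alpha)$ is a function of $(x_1,\ldots,x_{n-1})$ alone. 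Any component of $\nabla^j$ involving $\partial_t$ therefore vanishes, and the supremum of $|\nabla^j(\iota_{\nu_{\mathbf{p}^{-1}(P)}^\vee}\mathbf{p}^*(\alpha))|$ over the leaf $(\mathbf{p}^{-1}P)_{V,x'}$ equals the supremum of $|\nabla^j(\iota_{\nu_P^\vee}\alpha)|$ over the leaf $P_{V\cap R, x'}$. Since the weighted integral in $U$ then collapses onto the corresponding integral on $V \cap R$, the bound $D_{j,V,\beta}\hp^{-(j+s-|\beta|-k)/2}$ is inherited verbatim from $\alpha \in \mathcal{W}^s_P(R)$.

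For the second statement I would write $\alpha \in \mathcal{W}^s_k(R) \subset \tilde{\mathcal{W}}^s_k(R)$ as a finite sum $\alpha = \sum_i \alpha_i$ with $\alpha_i \in \mathcal{W}^s_{P_i}(R)$, so that $\mathbf{p}^*(\alpha) = \sum_i \mathbf{p}^*(\alpha_i) \in \sum_i \mathcal{W}^s_{\mathbf{p}^{-1}(P_i)}(U) \subset \tilde{\mathcal{W}}^s_k(U)$ by the first statement, and then apply the same reasoning to $d\alpha \in \tilde{\mathcal{W}}^{s+1}_{k+1}(R)$ to conclude $d(\mathbf{p}^*\alpha) = \mathbf{p}^*(d\alpha) \in \tilde{\mathcal{W}}^{s+1}_{k+1}(U)$. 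I do not anticipate any substantial obstacle beyond careful bookkeeping with the adapted coordinate system; the essential simplification throughout is that $\mathbf{p}^*(\alpha)$ is constant along the one-dimensional fibers of $\mathbf{p}$, which simultaneously trivializes the $\partial_t$ derivatives and reduces suprema over leaves in $U$ to suprema over the corresponding leaves in $R$.
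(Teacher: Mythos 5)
Your proposal is correct and follows essentially the same route as the paper's proof: verify conditions (1) and (2) directly via pullback (taking $W_{\mathbf{p}^{-1}(P)} = \mathbf{p}^{-1}(W_P)$ and noting $\mathbf{p}^*(\nu_P)$ is the conormal form of $\mathbf{p}^{-1}(P)$), and for condition (3) choose affine coordinates adapted to $\mathbf{p}$ so that the fiber coordinate is simply dropped, observe that $\iota_{\nu_{\mathbf{p}^{-1}(P)}^\vee}\mathbf{p}^*(\alpha)$ is constant along fibers, and thereby transfer the supremum over leaves in $U$ to the corresponding suprema on $R$; the final statement follows from the same expansion into finite sums and applying the result to $d\alpha$. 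Your explicit remark that $\partial_t$-derivatives of the pulled-back coefficient vanish makes the transfer of the estimate a little more transparent, but it is the same mechanism the paper uses.
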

\begin{proof}
For condition (1) of Definition \ref{def:asypmtotic_support_pre}, suppose we take $x \in U \setminus \mathbf{p}^{-1}(P)$, then we have an open subset $V \subset R \setminus P$ containing $\mathbf{p}(x)$. Therefore from the fact that $\alpha|_V \in \mathcal{W}^{-\infty}_k(V)$ (here $k = \text{codim}_\real(\mathbf{p}^{-1}(P))$) we get $\mathbf{p}^*(\alpha)|_{\mathbf{p}^{-1}(V)} \in \mathcal{W}^{-\infty}_k(\mathbf{p}^{-1}(V))$.
	
For condition (2) of Definition \ref{def:asypmtotic_support_pre}, we take a neighborhood $W_P$ of $P$ in $R$ such that we can write $\alpha$ as $h \nu_P + \eta$ with $\eta \in \mathcal{W}^{-\infty}_k(R)$ and $\nu_P$ is the normal of $P$ in $R$. We let $W_{\mathbf{p}^{-1}(P)} = \mathbf{p}^{-1}(W_P)$, and observe that $\mathbf{p}^*(\alpha) = \mathbf{p}^*(h) \mathbf{p}^*(\nu_P) + \mathbf{p}^*(\eta)$ with $\mathbf{p}^*(\nu_P)$ being normal of $\mathbf{p}^{-1}(P)$ in $U$ which is the desired decomposition.
	
For condition (3), we consider a point $p \in \mathbf{p}^{-1}(P)$ with affine coordinates
$(x_1,\dots,x_k,x_{k+1},\dots,x_{n-1}) \in (-\delta,\delta)^{n-1}$
around $q := \mathbf{p}(p)$ in $V \subset R$ such that $x' = (x_1,\dots,x_k)$ are parametrizing the foliation $\{P_{V,x'}\}_{x'\in (-\delta,\delta)^k}$ parallel to $P$ in $V$.
Therefore, we can extend the affine coordinates as $(x_1,\dots,x_k,x_{k+1},\dots,x_{n})$ of $\mathbf{p}^{-1}(V)$ such that
$\mathbf{p}(x_1,\dots,x_k,x_{k+1},\dots,x_{n}) = (x_1,\dots,x_k,x_{k+1},\dots,x_{n-1})$
in these coordinates. We notice that $\{\mathbf{p}^{-1}(P_{V,x'})\}_{x'\in (-\delta,\delta)^k}$ is the foliation parallel to $\mathbf{p}^{-1}(P)$ in $\mathbf{p}^{-1}(V)$ and we also have $\sup_{P_{V,x'} \cap R}|\nabla^j(\iota_{\nu_{P}^\vee}\alpha)| =  \sup_{\mathbf{p}^{-1}(P_{V,x'})}|\nabla^j(\iota_{\mathbf{p}^*(\nu_{P})^\vee} \mathbf{p}^*(\alpha))|.$
Therefore we conclude that
\begin{align*}
  &\int_{x'\in N_{\mathbf{p}^{-1}(V)}}  (x')^\beta \left(\sup_{\mathbf{p}^{-1}(P_{V,x'})}|\nabla^j (\iota_{\mathbf{p}^*(\nu_{P})^\vee} \mathbf{p}^*(\alpha))| \right) \mathbf{p}^*(\nu_P) \\
= & \int_{x'\in N_V}  (x')^\beta \left(\sup_{P_{V,x'}}|\nabla^j (\iota_{\nu_P^\vee} \alpha)| \right) \nu_P \leq D_{j,V,\beta} \hp^{-\frac{j+s-|\beta|-k}{2}},
\end{align*}
which is the desired estimate.
	
The statement that $\mathbf{p}^*$ is a map from $\mathcal{W}^s_k(R)$ to $\mathcal{W}^s_k(U)$ is a direct consequence of the first statement.
\end{proof}

\begin{lemma}\label{lem:integral_lemma_modified}
For $\alpha \in \mathcal{W}^s_k(U)$, we have $I(\alpha) \in \mathcal{W}^{s-1}_{k-1}(U)$.
\end{lemma}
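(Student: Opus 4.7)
The plan is to combine Lemma \ref{integral_lemma} with a Cartan-style chain homotopy identity satisfied by $I$. Lemma \ref{integral_lemma} applied to the hypothesis $\alpha \in \mathcal{W}^s_k(U) \subset \tilde{\mathcal{W}}^s_k(U)$ already gives $I(\alpha) \in \tilde{\mathcal{W}}^{s-1}_{k-1}(U)$, so by Definition \ref{def:asymptotic_support} only the second clause remains, namely that $d I(\alpha) \in \tilde{\mathcal{W}}^s_k(U)$.

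First I would establish the identity
$$d I(\alpha) = \alpha - \mathbf{p}^*\mathbf{i}^*(\alpha) - I(d\alpha),$$
where $\mathbf{i}: R \hookrightarrow U$ is the inclusion of the reference tropical hyperplane and $\mathbf{p}: U \to R$ is the affine projection obtained by flowing along $-v$, so that $\tau$ identifies $U$ with $W \subset \real \times R$ and both $\mathbf{i},\mathbf{p}$ become affine maps of the type considered in Lemmas \ref{lem:inclusion_lemma} and \ref{lem:projection_lemma}. This identity is a direct computation: decomposing $\tau^*\alpha = \alpha_0 + dt\wedge\alpha_1$ into components without and with $dt$, one has $I(\alpha) = \int_0^t \alpha_1(s,\cdot)\,ds$; applying $d$ to $I(\alpha)$ and to $\tau^*\alpha$ and comparing, the two $\int_0^t d_R\alpha_1\,ds$ contributions cancel and what remains is exactly $\tau^*\alpha - \mathbf{p}^*\mathbf{i}^*(\alpha)$.

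Next I would verify that each of the three terms on the right-hand side lies in $\tilde{\mathcal{W}}^s_k(U)$. The term $\alpha$ does so by hypothesis, and $I(d\alpha)$ does so by Lemma \ref{integral_lemma} applied to $d\alpha \in \tilde{\mathcal{W}}^{s+1}_{k+1}(U)$. For $\mathbf{p}^*\mathbf{i}^*(\alpha)$, I would write $\alpha = \sum_j \alpha_{P_j}$ as a finite sum with $\alpha_{P_j} \in \mathcal{W}^s_{P_j}(U)$ as allowed by Definition \ref{def:asy_support_algebra}. For each summand Lemma \ref{lem:inclusion_lemma} yields $\mathbf{i}^*(\alpha_{P_j}) \in \mathcal{W}^s_{Q_j}(R)$ for some codimension $k$ polyhedral $Q_j \subset R$ when $P_j \pitchfork R$, and $\mathbf{i}^*(\alpha_{P_j}) \in \mathcal{W}^{-\infty}_k(R)$ otherwise; Lemma \ref{lem:projection_lemma} then promotes this to $\mathbf{p}^*\mathbf{i}^*(\alpha_{P_j}) \in \mathcal{W}^s_{\mathbf{p}^{-1}(Q_j)}(U)$. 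Summing over $j$ gives $\mathbf{p}^*\mathbf{i}^*(\alpha) \in \tilde{\mathcal{W}}^s_k(U)$.

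Combining everything yields $dI(\alpha) \in \tilde{\mathcal{W}}^s_k(U)$, which together with the first paragraph gives $I(\alpha) \in \mathcal{W}^{s-1}_{k-1}(U)$. The only delicate step is really the homotopy identity itself, especially matching the boundary-at-$R$ contribution with $\mathbf{p}^*\mathbf{i}^*(\alpha)$ under the identification via $\tau$ and keeping track of signs; once that is in place the three auxiliary lemmas discharge all of the asymptotic analysis, so no new estimates are required.
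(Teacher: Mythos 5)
Your proposal is correct and follows essentially the same route as the paper: Lemma \ref{integral_lemma} gives $I(\alpha)\in\tilde{\mathcal{W}}^{s-1}_{k-1}(U)$, the homotopy identity $dI+Id=\text{Id}-\mathbf{p}^*\circ\mathbf{i}^*$ is invoked, and Lemmas \ref{lem:inclusion_lemma} and \ref{lem:projection_lemma} discharge the $\mathbf{p}^*\mathbf{i}^*(\alpha)$ term, yielding $dI(\alpha)\in\tilde{\mathcal{W}}^{s}_{k}(U)$ and hence $I(\alpha)\in\mathcal{W}^{s-1}_{k-1}(U)$. The only difference is that you spell out the derivation of the homotopy identity and the termwise decomposition of $\alpha$, which the paper states more tersely.
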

\begin{proof}
Using the same notations as in Lemma \ref{integral_lemma}, note that the integral operator $I$ satisfies the equation $dI + Id = \text{Id} - \mathbf{p}^*\circ \mathbf{i}^*$. For a given $\alpha \in \mathcal{W}^s_k(U)$, we have $I(\alpha) \in \tilde{\mathcal{W}}^{s-1}_{k-1}(U)$ by Lemma \ref{integral_lemma}. Making use of Lemmas \ref{lem:inclusion_lemma} and \ref{lem:projection_lemma}, we have
$d(I(\alpha)) = -I(d(\alpha)) + \alpha - \mathbf{p}^*\circ \mathbf{i}^*(\alpha) \in \tilde{\mathcal{W}}^s_k(U),$
which implies $I(\alpha) \in \mathcal{W}^{s-1}_{k-1}(U)$.	
\end{proof}
	
Now we consider a chain of affine subspaces $\{q_0\}=U_0 \leq U_1 \cdots \leq U_n = U$ with $\text{dim}_\real(U_j) = j$, equipped with the natural inclusions $\mathbf{i}_j : U_j \rightarrow U_{j+1}$ and affine projections $\mathbf{p}_j : U_{j+1} \rightarrow U_j$ such that the fiber of $\mathbf{p}_j$ is tangent to a constant affine vector field $v_j$ on $U_{j+1}$. Composition of the inclusion operators gives $\mathbf{i}_{i,j} : U_i \rightarrow U_j$, and similarly for the projection operator $\mathbf{p}_{i,j}:U_j \rightarrow U_i$ for $i<j$. We let $I_j : \mathcal{W}^s_{k}(U_{j+1}) \rightarrow \mathcal{W}^{s-1}_{k-1}(U_{j+1})$ be the integral operator defined on $U_{j+1}$ using the vector field $v_j$ as in the beginning of this subsection (Section \ref{sec:integral_lemma}).
	
We choose $q_0$ to be an {\em irrational} point in $U_1$ (strictly speaking it is not a tropical polyhedral subset of $U_1$) for later applications in Section \ref{sec:solve_MC_equation}. The definitions of $\mathbf{p}_{0,j}^*$'s are still valid if they are treated as inclusions of constant functions. Despite the fact that $q_0$ is irrational, the operator $I_0$ defines a map $\mathcal{W}^s_k(U_1) \rightarrow \mathcal{W}^{s-1}_{k-1}(U_1)$ because every $\alpha \in \mathcal{W}^s_{1}(U_1)$ is a finite sum of $\sum_{l} \alpha_l$ with $\alpha_l \in \tilde{\mathcal{W}}^s_{P_l}(U_l)$ for some {\em rational} points $P_l$'s on $U_1$ which in particular miss $q_0$ and therefore $I_0(P_l)$ is still a tropical subspace of $U_1$.
	
We then define a {\em new} integral operator by
\begin{equation}\label{eqn:generalized_integral_operator}
I = \mathbf{p}_{1,n}^* I_{0}\mathbf{i}_{1,n}^* + \dots + \mathbf{p}_{n-1,n}^* I_{n-2}\mathbf{i}_{n-1,n}^* + I_{n-1},
\end{equation}
which is defined as $\mathcal{W}^s_*(U) \rightarrow \mathcal{W}^{s-1}_{*-1}(U)$, with the corresponding operator $\mathbf{i}^*:=\mathbf{i}_{0,n}^*$ being the evaluation at $q_0$ and the operator $\mathbf{p}^* : =  \mathbf{p}_{0,n}^*$.
	
\begin{prop}\label{prop:homotopy_operator_identity}
We have the identity
$d I + I d= \text{Id} - \mathbf{p}^* \circ \mathbf{i}^*,$
meaning that $I$ is contracting the cohomology of $U$ to that of the point $q_0$.
\end{prop}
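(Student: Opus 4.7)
The plan is to prove the identity by a telescoping argument that reduces to the one-variable Cartan-type homotopy formula at each level of the chain $U_0 \leq U_1 \leq \cdots \leq U_n$. First, I would recall that for each individual $j \in \{0,1,\dots,n-1\}$, the integral operator $I_j$ on $U_{j+1}$ (integrating along the flow of the constant affine vector field $v_j$ transverse to $U_j \subset U_{j+1}$) satisfies the standard homotopy identity
\begin{equation*}
d I_j + I_j d \;=\; \text{Id} - \mathbf{p}_j^* \circ \mathbf{i}_j^*
\end{equation*}
on $\mathcal{W}^s_*(U_{j+1})$. This is exactly the $n=1$ case, and is essentially the content of the discussion preceding Lemma \ref{lem:integral_lemma_modified}, following from the fundamental theorem of calculus applied to the explicit formula \eqref{general_integral_operator}.

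Next I would rewrite the operator $I$ defined in \eqref{eqn:generalized_integral_operator} uniformly as
\begin{equation*}
I \;=\; \sum_{j=0}^{n-1} \mathbf{p}_{j+1,n}^*\, I_j\, \mathbf{i}_{j+1,n}^*,
\end{equation*}
with the convention $\mathbf{p}_{n,n}^* = \mathbf{i}_{n,n}^* = \text{Id}$. Since the deRham differential $d$ commutes with every pullback, summing the identity for each $I_j$ sandwiched between $\mathbf{p}_{j+1,n}^*$ and $\mathbf{i}_{j+1,n}^*$ yields
\begin{align*}
dI + Id \;=\; \sum_{j=0}^{n-1} \mathbf{p}_{j+1,n}^*(dI_j + I_j d)\mathbf{i}_{j+1,n}^*
\;=\; \sum_{j=0}^{n-1} \mathbf{p}_{j+1,n}^*\mathbf{i}_{j+1,n}^* \;-\; \sum_{j=0}^{n-1} \mathbf{p}_{j+1,n}^*\mathbf{p}_j^*\mathbf{i}_j^*\mathbf{i}_{j+1,n}^*.
\end{align*}
Now the functoriality of pullback gives $\mathbf{p}_{j,n}^* = \mathbf{p}_{j+1,n}^*\circ \mathbf{p}_j^*$ and $\mathbf{i}_{j,n}^* = \mathbf{i}_j^*\circ \mathbf{i}_{j+1,n}^*$, so the second sum is $\sum_{j=0}^{n-1}\mathbf{p}_{j,n}^*\mathbf{i}_{j,n}^*$. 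The two sums therefore telescope, collapsing to the boundary terms $\mathbf{p}_{n,n}^*\mathbf{i}_{n,n}^* - \mathbf{p}_{0,n}^*\mathbf{i}_{0,n}^* = \text{Id} - \mathbf{p}^*\circ \mathbf{i}^*$, which is exactly the asserted identity.

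The only genuinely delicate point, and what I would treat as the main obstacle, is making sure every step is legal as an identity of operators on the filtered complex $\mathcal{W}^s_*$ rather than merely on arbitrary smooth forms; this is why the preceding Lemmas \ref{lem:inclusion_lemma}, \ref{lem:projection_lemma}, and \ref{lem:integral_lemma_modified} are needed. In particular one must check that each intermediate expression $\mathbf{p}_{j+1,n}^* I_j \mathbf{i}_{j+1,n}^*$ sends $\mathcal{W}^s_*(U)$ into itself with the correct shift in weight and degree, and that the irrationality of the base point $q_0 \in U_1$ does not spoil $I_0$. The latter is addressed exactly by the observation recorded before \eqref{eqn:generalized_integral_operator}: any $\alpha \in \mathcal{W}^s_1(U_1)$ is a finite sum of pieces $\alpha_l \in \tilde{\mathcal{W}}^s_{P_l}(U_1)$ with $P_l$ rational points distinct from $q_0$, so $I_0\mathbf{i}_{1,n}^*\alpha$ is well-defined and the image tropical subsets $I_0(P_l)$ remain rational in $U_1$. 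Beyond this bookkeeping, the proof is purely formal telescoping, so I do not anticipate any deeper obstruction.
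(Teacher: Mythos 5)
Your proof is correct and follows essentially the same telescoping argument as the paper: apply the single-level homotopy identity $dI_j + I_j d = \text{Id} - \mathbf{p}_j^*\mathbf{i}_j^*$ inside the pullback sandwich $\mathbf{p}_{j+1,n}^*(\cdot)\mathbf{i}_{j+1,n}^*$, use functoriality of pullbacks, and sum over $j$ so that the cross terms telescope to $\text{Id} - \mathbf{p}^*\mathbf{i}^*$. Your added remarks about well-definedness on $\mathcal{W}^s_*$ and the irrational base point are consistent with the discussion the paper places just before the proposition.
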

\begin{proof}
We first notice that
$\mathbf{p}^*_{j+1,n}(dI_{j} + I_{j} d) \mathbf{i}^*_{j+1,n} = \mathbf{p}^*_{j+1,n}(id_{U_{j+1}} - \mathbf{p}^*_{j,j+1} \mathbf{i}^*_{j,j+1}) \mathbf{i}^*_{j+1,n},$
which gives $d (\mathbf{p}^*_{j+1,n}I_{j}\mathbf{i}^*_{j+1,n}) + (\mathbf{p}^*_{j+1,n}I_{j} \mathbf{i}^*_{j+1,n}) d = \mathbf{p}^*_{j+1,n}\mathbf{i}^*_{j+1,n} - \mathbf{p}^*_{j,n} \mathbf{i}^*_{j,n}.$
Taking summation over $j = 0,\dots,n-1$ gives the desired equation.
\end{proof}

\subsection{The tropical dgLa and its homotopy operator}\label{sec:homotopy_operator}
From now on, we restrict ourselves to the case that $B_0 = M_\real$ with $U \subset M_\real$.
\subsubsection{The tropical dgLa and the extended tropical vertex group}\label{sec:tropical_dgla}
As an analogue of the dgLa $\mathbf{G}^{*,*}_n$ introduced in \eqref{eqn:Fourier_transform}, we impose the requirement of the asymptotic behavior as $\hp \rightarrow 0$ and replace $\Omega^*_\hp(U)$ by the dg subalgebra $\mathcal{W}^0_*(U)$.
\begin{definition}\label{def:finite_Lie_alg}
For every convex open subset $U\subset B_0$, we define a dg Lie subalgebra of $PV^{*,*}_\hp|_U \otimes_\comp R_n$ by
$$
\mathcal{G}^{*,*}_n(U) :=\hat{\mathcal{F}}\Big\lbrack \left(\bigoplus_{m \in \mathcal{P}} \mathcal{W}^0_*(U) z^m\right) \otimes_\inte \wedge^*N \otimes_\comp R_n \Big\rbrack,
$$
making use of the Fourier transform \eqref{eqn:Fourier_transform}. Abusing notations, we will drop the identification via Fourier transform in \eqref{eqn:Fourier_transform} and simply write
$\mathcal{G}^{*,*}_n(U) = \left(\bigoplus_{m \in \mathcal{P}} \mathcal{W}^0_*(U) z^m\right) \otimes_\inte \bigwedge^*N \otimes_\comp R_n.$
Then we take the quotient by the dg Lie ideal
$\mathcal{I}^{*,*}_n(U) := \left(\bigoplus_{m \in \mathcal{P}} \mathcal{W}^{-1}_*(U) z^m\right) \otimes_\inte \bigwedge^*N \otimes_\comp R_n$
to obtain
\begin{equation*}
(\mathcal{G}/\mathcal{I})^{*,*}_n(U) := \left(\bigoplus_{m \in \mathcal{P}} \left(\mathcal{W}^{0}_*(U)/ \mathcal{W}^{-1}_*(U)\right) z^m\right) \otimes_\inte \wedge^* N \otimes_\comp R_n
\end{equation*}
which defines a dgLa (since $\mathcal{W}^{-1}_*(U)$ is a dg ideal of $\mathcal{W}^{0}_*(U)$).
\end{definition}

A general element of $\mathcal{G}^{i,j}_n$, $\mathcal{E}^{i,j}_n$ and $(\mathcal{G}/\mathcal{I})^{i,j}_n$ is a finite sum of the form
$$\sum_{I } \sum_{m, n_1,\dots,n_j} \alpha_{m,I}^{n_1,\dots,n_j} z^{m} \check{\partial}_{n_1} \wedge \cdots \wedge \check{\partial}_{n_j} u_I,$$
where $I \subset \{1,\dots,n\}$ and $u_I = \prod_{i \in I } u_i$, with $\alpha_{m,I}^{n_1,\dots,n_j} \in \mathcal{W}^{i}_i(U)$, $\alpha_{m,I}^{n_1,\dots,n_j}  \in \mathcal{W}^{i-1}_i(U)$ and $\alpha_{m,I}^{n_1,\dots,n_j}  \in \mathcal{W}^{i}_i(U) /\mathcal{W}^{i-1}_i(U)$ respectively. We will be concerned with the Maurer-Cartan equation \eqref{eqn:extended_MC_equation_intro} of the dgLa $(\mathcal{G}/\mathcal{I})^{*,*}_n(U)$ instead of $PV^{*,*}_\hp |_U\otimes_\comp \widehat{\comp[Q]}_\mathcal{S} \otimes_\comp R_n$, because we only care about the leading order behavior of the MC solutions as $\hp \rightarrow 0$.

Making use of the holomorphic volume form $\check{\Omega}$ on $\check{X}$, we obtain a BV operator $\bvd$ acting on $PV^{*,*}_\hp$ as in Section \ref{sec:BV_algebra}. The BV operator can be carried to $\mathcal{G}^{*,*}_n$ and naturally to $(\mathcal{G}/\mathcal{I})^{*,*}_n$, equipping them with dgBV structures. Explicitly, the BV operator is given by
$\bvd( \alpha z^{m} \check{\partial}_{n_1} \wedge \dots \wedge \check{\partial}_{n_j}) = \sum_{j} (-1)^{|\alpha|+r-1}\check{\partial}_{n_r}(\alpha z^m ) \check{\partial}_{n_1} \wedge \cdots \widehat{\check{\partial}_{n_{r}}} \cdots \wedge \check{\partial}_{n_j}$
in $\mathcal{G}^{*,*}_n$, which is further reduced to
$$\bvd( \alpha z^{m} \check{\partial}_{n_1} \wedge \dots \wedge \check{\partial}_{n_j})  = \sum_{j} (-1)^{|\alpha|+r-1} (n_r,\bar{m}) \alpha  z^m  \check{\partial}_{n_1} \wedge \cdots \widehat{\check{\partial}_{n_r}} \cdots \wedge \check{\partial}_{n_j}$$
in $(\mathcal{G}/\mathcal{I})^{*,*}_n$. This is because of the extra $\hp$ in the formula \eqref{eqn:holo_vector_field_action} giving $\check{\partial}_n (\alpha) \in \mathcal{I}^{*,*}_n$. As a consequence, the Lie bracket $[\cdot,\cdot]$ in $(\mathcal{G}/\mathcal{I})^{*,*}_n$ is given by
$[\alpha z^m \check{\partial}_{n_{I}},\beta z^{m'} \check{\partial}_{n_{J}}] = (-1)^{(|I|+1)|\beta|} \alpha \beta [z^m \check{\partial}_{n_{I}},z^{m'} \check{\partial}_{n_{J}}],$
where $\check{\partial}_{n_I} = \check{\partial}_{n_{i_1}} \wedge \cdots \wedge\check{\partial}_{n_{i_l}}$.

\begin{definition}\label{def:extended_tropical_lie_alg}
We call the dg Lie subalgebra $\mathcal{H}^{*,*}_n \leq (\mathcal{G}/\mathcal{I})^{*,*}_n$ defined by $\mathcal{H}^{*,*}_n := ker(\bvd)$, which is equipped with the differential $\bar{\partial}_W$ and Lie-bracket $[\cdot,\cdot]$, the {\em tropical dgLa}.
	We also call $\mathfrak{h}^{*}_n := \mathcal{H}^{*,0}_n \cap ker(\bar{\partial})$ the {\em extended tropical Lie-algebra}. The corresponding exponential group $\exp(\mathfrak{h}^{*}_n)$ is called the {\em extended tropical vertex group}.
\end{definition}

Explicitly, we have
$$
\mathfrak{h}^{0}_n  = \left( \bigoplus_{m \in \mathcal{P}} \comp \cdot z^m\right)  \otimes_\comp R_n, \quad
\mathfrak{h}^1_n  = \left(\bigoplus_{m \in \mathcal{P}} \comp\cdot z^m\right) \otimes_\inte m^{\perp} \otimes_\comp R_n,\quad
\mathfrak{h}^2_n  = \comp \cdot \check{\partial_1} \wedge \check{\partial}_2 \otimes_\comp R_n,
$$
and $\mathcal{H}^{*,*}_n$ can be viewed as the Dolbeault resolution of $\mathfrak{h}^*_n$. We will see that solving the Maurer-Cartan equation \eqref{eqn:extended_MC_equation_intro} in $\mathcal{H}^{*,*}_n$ is intimately and directly related to tropical counting.

\subsubsection{The homotopy operator}
In order to solve the Maurer-Cartan equation \eqref{eqn:extended_MC_equation_intro} using Kuranishi's method \cite{Kuranishi65}, we need a homotopy operator $H$ (also called a {\em propagator}) to fix the gauge. Here we explain the construction of such a homotopy operator $H$ using the operator $I$ defined in \eqref{eqn:generalized_integral_operator}. We will take $U = B_0 = M_\real$ and drop the dependence on $U$ in notations in the rest of this subsection.

\begin{notation}\label{not:perpendicular_hyperplane}
For each $m \in \mathcal{P}$ with the associated $\bar{m} \in M$, $\bar{m}$ naturally gives an affine vector field on $B_0 = M_\real$ which, by abuse of notations, will also be denoted as $\bar{m}$. We fix an affine linear metric $g_0$ on $M_\real$. 
Then, given any real number $R$, we choose a chain of affine subspaces $\{pt\} = U^m_{0} \leq U_{1}^m \leq U^m_{2} = M_\real$ as follows. First, we take $v^m_{1} = -\bar{m}$ if $\bar{m} \neq 0$ and take $v^m_{1}$ to be an arbitrary nonzero element in $M$ if $\bar{m} = 0$. Then we set $U^m_1 = \{x \mid g_0(v^m_{1},x) = -R \}$ and choose $U^m_{0}$ to be an irrational point on $U^{m}_1$.
Such a choice defines a homotopy operator $H_m : \mathcal{W}^{0}_*(U) \rightarrow \mathcal{W}^0_{*-1}(U)$ using the construction in \eqref{eqn:generalized_integral_operator} (which was denoted by $I$ there).
We also denote the half space $\{x \mid g_0(-\bar{m},x) \geq -R \}$ by $U^m_{1,+}$.

\end{notation}

\begin{definition}\label{MC_homotopy}
For each $m \in \mathcal{P}$, we define the homotopy operator
$H_m : \mathcal{W}^0_*(U) z^m \rightarrow \mathcal{W}^0_{*-1}(U) z^m$
on the direct summand for each Fourier mode $z^m$ by simply taking $H_m(\alpha z^m) := H_m(\alpha) z^m$. We also define the projection
$\mathsf{P}_m : \mathcal{W}^0_*(U) z^m \rightarrow \mathcal{W}^0_0(U^m_0) z^m$
by $\mathsf{P}_m (\alpha z^m) := (\alpha|_{x^0_m})z^{m}$ at degree $0$ and $0$ otherwise, where $\alpha|_{x^0_m}$ is evaluation of $\alpha$ at the point $\{x^0_m\} = U^m_0$, and the operator
$\iota_m :\mathcal{W}^0_0(U^m_0) z^m \rightarrow \mathcal{W}^0_*(U) z^m $
by $\iota_m (\alpha z^m) := \iota_m(\alpha) z^m$ at degree $0$ and $0$ otherwise, by setting $\iota_m: \mathcal{W}^0_0(U^m_0) \hookrightarrow \mathcal{W}^0_*(U)$ to be the embedding of constant functions over $M_\real$.
	
We abuse notations by treating $H_m$, $\mathsf{P}_m$ and $\iota_m$ as acting on the spaces $\mathcal{W}^0_*(U)$ and $\mathcal{W}^0_0(U^m_0)$ respectively.
\end{definition}

As in \cite{kwchan-leung-ma}, these operators satisfy the following identity of homotopy retracting $\mathcal{W}^0_*(U) z^m$ onto its cohomology $\mathcal{W}^0_0(U^m_0)z^m =  H^*(\mathcal{W}^0_*(U),d) z^m$, i.e. we have
\begin{equation}
\text{Id} - \iota_m \mathsf{P}_m = d H_m + H_m d.
\end{equation}
Moreover, these operators can be descended to $(\mathcal{W}^0_*(U)/\mathcal{W}^{-1}_*(U)) z^m$ contracting to its cohomology $\comp \cdot z^m \cong \big( \mathcal{W}^0_0(U^m_0)/\mathcal{W}^{-1}_0(U^m_0) \big) z^m$.

\begin{definition}\label{pathspacehomotopy}
We define the operators
$H := \bigoplus H_m, \ \mathsf{P} := \bigoplus \mathsf{P}_m \text{  and  } \iota := \bigoplus \iota_m$
acting on the direct sum $\bigoplus_m \mathcal{W}^0_*(U) z^m$ and its cohomology.
These operators extend naturally to the tensor product $\mathcal{G}^{*,*}_n(U) =  \left(\bigoplus_{m \in \mathcal{P}} \mathcal{W}^0_*(U) z^m\right) \otimes_\inte \bigwedge^*N \otimes_\comp R_n $, and descend to the quotient $(\mathcal{G}/\mathcal{I})^{*,*}_n$. Moveover, these operators preserve $\mathcal{H}^{*,*}_n$ and hence can also be defined on $\mathcal{H}^{*,*}_n$. All of the above operators will be denoted by the same notations.
\end{definition}

\subsection{Solving the Maurer-Cartan equation}\label{sec:solve_MC_equation}


Recall that we have fixed $n$ points $P_1, \dots, P_n$ in generic position, with each $P_i$ corresponding to a formal variable $u_i \in R_n$. To each $P_i$, we associate an input term of the form
$\incoming^{(i)} = u_i \delta_{P_i} (\check{\partial}_{1} \wedge \check{\partial}_2),$
where $\check{\partial}_1, \check{\partial}_1$ are the holomorphic vector fields corresponding to the basis $\{e^1,e^2\}$ of $N$ (fixed at the beginning of Section \ref{sec:semi_flat_family}), and
\begin{equation}\label{eqn:delta_P_i}
\delta_{P_i} = \frac{1}{\pi \hp} e^{-(\eta_{i,1}^2 + \eta_{i,2}^2)/\hp} d\eta_{i,1} \wedge d\eta_{i,2} \in \mathcal{W}^{\infty}_2
\end{equation}
is an $\hp$-dependent smoothing of the `delta-form' at $P_i$, for some affine coordinates $(\eta_{i,1},\eta_{i,2})$ on $M_\real$ taking the values $(0,0)$ at $P_i$.
We are interested in the Maurer-Cartan solutions in $\mathcal{H}^{*,*}_n$ constructed by summing over trees with input $\sum_{i=1}^n \incoming^{(i)}$.

Notice that we have $\delta_{P_i} \in \mathcal{W}^{2}_{P_i}$ because we can apply Lemma \ref{lem:support_product} to the expression
$$\delta_{P_i} = \left(\left(\frac{1}{\pi \hp}\right)^{1/2} e^{-(\eta_{i,1}^2)/\hp}d\eta_{i,1}\right) \wedge \left(\left(\frac{1}{\pi \hp}\right)^{1/2} e^{-(\eta_{i,2}^2)/\hp}d\eta_{i,2}\right),$$
and we have the following lemma from \cite{kwchan-leung-ma}:
\begin{lemma}[Lemma 4.14 in \cite{kwchan-leung-ma}]
For any affine linear function $\eta$ on $U$, the 1-form $\left(\frac{1}{\pi \hp}\right)^{1/2} e^{-(\eta^2)/\hp} d\eta$ has asymptotic support on the line $L := \{ \eta = 0\}$ with weight $1$.
\end{lemma}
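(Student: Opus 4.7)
The plan is to verify the three conditions of Definition \ref{def:asypmtotic_support_pre} directly for the $1$-form $\alpha_\eta := \left(\frac{1}{\pi\hp}\right)^{1/2} e^{-\eta^2/\hp}\, d\eta$ with the polyhedral subset being the codimension-$1$ line $L = \{\eta = 0\}$ and the weight $s=1$. Since $\eta$ is affine linear, $d\eta$ is a nonzero covariant constant $1$-form pointing in the normal direction of $L$, which is exactly the role played by $\nu_L$ in the definition.

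First I would check condition (1): on any open set $V \Subset U \setminus L$ we have $|\eta| \geq c_V > 0$, so the Gaussian factor $e^{-\eta^2/\hp}$ and each of its derivatives is bounded by a constant multiple of $\hp^{-N} e^{-c_V^2/\hp}$, which decays faster than any power of $\hp$. This gives $\alpha_\eta|_V \in \mathcal{W}^{-\infty}_1(V)$. Condition (2) is immediate by taking $W_L = U$, $h(x,\hp) = \frac{1}{\sqrt{\pi\hp}} e^{-\eta^2/\hp}$, and error term $\eta_{\text{err}} = 0$, since $d\eta$ is a constant multiple of $\nu_L$ (after fixing the normalization).

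The substantive step is condition (3). I would choose local affine coordinates $(x_1,\dots,x_n)$ near a point of $L$ in which $x_1$ is proportional to $\eta$ (say $x_1 = a\eta$ for some constant $a \neq 0$) and the remaining coordinates parametrize $L$. Then $\nu_L^\vee$ is a constant multiple of $\partial_{x_1}$, and $\iota_{\nu_L^\vee}\alpha_\eta$ is (up to a constant) the scalar function $\frac{1}{\sqrt{\pi\hp}} e^{-\eta^2/\hp}$, which depends only on $x_1$. Consequently $\sup_{L_{V,x'}}|\nabla^j(\iota_{\nu_L^\vee}\alpha_\eta)|$ is just its pointwise absolute value on the slice. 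A direct computation of derivatives in $x_1$ yields an expression of the form $\hp^{-(j+1)/2}\, H_j(\eta/\sqrt{\hp})\, e^{-\eta^2/\hp}$, where $H_j$ is a polynomial of degree $j$ (essentially a Hermite-type polynomial).

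After this, the required estimate
\[
\int_{x'} (x')^\beta \sup_{L_{V,x'}}|\nabla^j(\iota_{\nu_L^\vee}\alpha_\eta)|\, \nu_L \;\leq\; D_{j,V,\beta}\, \hp^{-\frac{j+s-|\beta|-k}{2}}
\]
with $k=1, s=1$, i.e.\ the right-hand side is $D\,\hp^{-(j-|\beta|)/2}$, is established by the substitution $u = \eta/\sqrt{\hp}$, $d\eta = \sqrt{\hp}\, du$, which turns the integral into $\hp^{(|\beta|+1-(j+1))/2}\int |u|^{|\beta|} |H_j(u)| e^{-u^2}\, du = C\, \hp^{(|\beta|-j)/2}$. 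The Gaussian integral on the right is finite, yielding the bound. The only mildly delicate point is bookkeeping the powers of $\hp$ coming from the prefactor $(\pi\hp)^{-1/2}$, the $j$ derivatives each contributing $\hp^{-1/2}$, the monomial $(x')^\beta$ contributing $\hp^{|\beta|/2}$, and the Jacobian $d\eta = \sqrt{\hp}\, du$; everything else is a standard Gaussian moment estimate, so I do not expect serious obstacles.
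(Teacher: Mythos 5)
Your verification is correct, and it is the expected direct check of the three conditions of Definition~\ref{def:asypmtotic_support_pre} by Gaussian moment estimates. Note, though, that the paper under review does not actually prove this lemma: it is quoted verbatim from \cite{kwchan-leung-ma} (their Lemma~4.14), so there is no proof in the present text to compare against.

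Two small remarks that would tighten your write-up. First, the definition requires as a preliminary hypothesis that the form lies in $\mathcal{W}^{\infty}_1(U)$; this follows immediately from the bound $\left|\nabla^{j}\bigl((\pi\hp)^{-1/2}e^{-\eta^2/\hp}\bigr)\right|\lesssim \hp^{-(j+1)/2}$ uniformly on compacta, but it is worth recording before invoking conditions (1)–(3). Second, in condition (3) the supremum $\sup_{L_{V,x'}}|\nabla^{j}(\iota_{\nu_L^{\vee}}\alpha)|$ runs over the entire affine connection $\nabla$ in all coordinate directions; since the scalar $(\pi\hp)^{-1/2}e^{-\eta^2/\hp}$ depends only on the transverse coordinate $x_1$, all mixed and tangential derivatives vanish identically, so only the pure $\partial_{x_1}^{j}$ derivative survives. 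You implicitly use this when you reduce to a one-variable Hermite-type computation; making that observation explicit closes the only potential reading gap. With those clarifications in place, the substitution $u=\eta/\sqrt{\hp}$ yields exactly the exponent $\hp^{(|\beta|-j)/2}$, matching the required bound $\hp^{-\frac{j+s-|\beta|-k}{2}}$ with $k=s=1$, so the argument is complete.
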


Instead of solving the Maurer-Cartan equation directly, we will solve the equation \eqref{eqn:pseudoMC}:
\begin{equation}\label{eqn:pseudoMC}
\Phi= \incoming - H ([W,\Phi] + \half [\Phi,\Phi]),
\end{equation}
where $\Phi$ is a degree $0$ element in $\mathcal{H}^{*,*}_n$,
with the input
\begin{equation}\label{eqn:input}
\incoming := \sum_{i=1}^n \incoming^{(i)}.
\end{equation}
This originates from a method of Kuranishi \cite{Kuranishi65} in solving the Maurer-Cartan equation of the classical Kodaira-Spencer dgLa. His method can be generalized to our current situation as follows (see e.g. \cite{manetti2005differential})
\begin{prop}\label{prop:pseudoMC_to_MC}
Suppose that $\Phi $ satisfies the equation \eqref{eqn:pseudoMC}. Then $\Phi$ satisfies the Maurer-Cartan equation \eqref{eqn:extended_MC_equation_intro} if and only if $\mathsf{P}([W,\Phi] + \half [\Phi,\Phi]) = 0$.
\end{prop}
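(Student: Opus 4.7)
The strategy is Kuranishi's classical argument: one uses the homotopy identity to express the Maurer-Cartan curvature in terms of $\mathsf{P}O$ and a Bianchi-type correction, and then kills the correction using a combination of degree reasons and nilpotency of the ring $R_n$. Set
\begin{equation*}
O := [W,\Phi] + \tfrac{1}{2}[\Phi,\Phi], \qquad F := \bar\partial_W \Phi + \tfrac{1}{2}[\Phi,\Phi] = \bar\partial \Phi + O,
\end{equation*}
so the Maurer-Cartan equation reads $F = 0$.

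The first step is to apply $\bar\partial$ to the pseudo-MC equation \eqref{eqn:pseudoMC}. Each $\delta_{P_i}$ is a top-degree $(0,2)$-form on the $2$-dimensional base $B_0$, so $\bar\partial \incoming = 0$; combined with the homotopy identity $\bar\partial H + H \bar\partial = \mathrm{Id} - \iota\mathsf{P}$ from Definition \ref{pathspacehomotopy}, this yields
\begin{equation*}
F = \iota \mathsf{P} O + H \bar\partial O.
\end{equation*}
The forward direction is then immediate: if $F = 0$, then $O = -\bar\partial \Phi$, hence $\bar\partial O = -\bar\partial^2 \Phi = 0$, and the identity collapses to $\iota \mathsf{P} O = 0$, which forces $\mathsf{P}O = 0$ since $\iota$ is the injection of constants.

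For the converse, assume $\mathsf{P}O = 0$. Then $F = H \bar\partial O$, and since $\bar\partial O = \bar\partial F$ (using $F = \bar\partial \Phi + O$ and $\bar\partial^2 = 0$), we obtain $F = H\bar\partial F$. Invoking the standard Bianchi identity $\bar\partial_W F + [\Phi, F] = 0$ for the dgLa curvature (which follows from graded Jacobi together with $\bar\partial_W^2 = 0$), this rewrites as
\begin{equation*}
F = -H[W,F] - H[\Phi, F] =: AF + BF.
\end{equation*}
I then plan to show $F=0$ by induction on $\mathbf{m}$-adic order. The base case $F \in \mathbf{m}$ is clear since $\Phi \in \mathbf{m}$. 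The two key facts for the inductive step are: $(i)$ the operator $B = -H[\Phi, \cdot]$ gains one power of $\mathbf{m}$ because $\Phi \in \mathbf{m}$; and $(ii)$ $A^2 = 0$ on the space in which $F$ lives. The latter is a bidegree calculation: since $\Phi$ has components in $(i,i)$ for $i = 0, 1, 2$ and $\dim_\real B_0 = 2$, a direct case check of the pieces $\bar\partial \Phi$, $[W,\Phi]$, $[\Phi,\Phi]$ shows that $F$ lies in $PV^{0,1} \oplus PV^{1,2}$; the operator $A$ lowers both the polyvector and Dolbeault degrees by one (the bracket with $W$ lowers the former, $H$ lowers the latter), so $A$ factors through $PV^{0,1}$ and hence $A^2 = 0$. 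Consequently $(1-A)$ is invertible with inverse $(1+A)$. Assuming inductively $F \in \mathbf{m}^k$, the identity $(1-A)F = BF \in \mathbf{m}^{k+1}$ gives $F = (1+A)(1-A)F \in \mathbf{m}^{k+1}$. Since $\mathbf{m}^{n+1} = 0$ in $R_n$, the induction terminates with $F = 0$.

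The main obstacle I anticipate is the bidegree bookkeeping that secures $A^2 = 0$; this is where the two-dimensionality of $X$ (and thus $B_0$) enters in an essential way, and it is what makes the nilpotent-ring induction go through without any further gauge input. Verifying Bianchi with the correct signs in the shifted BV grading is routine but worth spelling out carefully.
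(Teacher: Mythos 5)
Your proof is correct and follows Kuranishi's argument exactly as the paper does: apply $\bar\partial$ to the pseudo-MC equation, use the homotopy identity $\bar\partial H + H\bar\partial = \mathrm{Id} - \iota\mathsf{P}$ together with $\bar\partial\incoming = 0$ to obtain $F = \iota\mathsf{P}O + H\bar\partial O$, and then handle the converse by iterating $F = -H[W+\Phi,F]$ and invoking nilpotency of $\mathbf{m}$ together with a bidegree bound for the $\mathrm{ad}_W$ piece. Your forward-direction observation $\bar\partial O = -\bar\partial^2\Phi = 0$ is a slightly cleaner route than the paper's $[\bar\partial\Phi, W+\Phi] = -[O,W+\Phi] = 0$, and your explicit $A^2 = 0$ bookkeeping fills in a step the paper leaves terse ("$\delta = 0$ by taking $m$ large enough"); the only slip is the line ``$F = (1+A)(1-A)F$'', which should read $F = (1+A)BF$ after inverting $1-A$.
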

\begin{proof}
Applying $\bar{\partial}$ to both sides of \eqref{eqn:pseudoMC} (recall that $\bar{\partial}$ is identified with the de Rham differential $d$ using the Fourier transform $\mathcal{F}$ \eqref{eqn:Fourier_transform}) and using $\bar{\partial} \incoming = 0$, we obtain
$$\bar{\partial} \Phi +[W,\Phi]+ \half [\Phi,\Phi]=  H([\bar{\partial} \Phi,W+ \Phi]) + \iota\circ \mathsf{P}( [W,\Phi]+\half [\Phi,\Phi] ).$$
Suppose that $\Phi$ satisfies the MC equation \eqref{eqn:extended_MC_equation_intro}. Then we see that $[\bar{\partial}\Phi,W+\Phi] = -[[W,\Phi] + \half [\Phi, \Phi], W+ \Phi] = 0$ and hence $\mathsf{P}( [W,\Phi]+\half [\Phi,\Phi]) = 0$.
	
For the converse, we let $\delta = \bar{\partial}\Phi + [W,\Phi] + \half[\Phi,\Phi]$. It follows from the assumption $\mathsf{P}( [W,\Phi]+\half [\Phi,\Phi]) = 0$ that
$\delta = H[W+\Phi,\delta] = (H\circ ad_{W+\Phi})^m (\delta)$
for any $m\in \inte_+$. Then by the fact that $\Phi \in \mathcal{H}^{*,*}_n\otimes \mathbf{m}$, and the fact that $ad_{W}$ is an operator of degree $(1,0)$, we have $\delta = 0$ by taking $m$ large enough.
\end{proof}

We notice that $\mathsf{P} \alpha \neq 0$ only if $\alpha \in \mathcal{H}^{i,0}_n$ by its construction. When we write $\Phi = \sum_{i=0}^2\Phi^{i,i}$ with $\Phi^{i,i} \in \mathcal{H}^{i,i}_n$, and consider the term $\mathsf{P}( [W,\Phi]+\half [\Phi,\Phi]) = 0$, we notice that
$\mathsf{P}([W,\Phi^{1,1}+\Phi^{2,2}] +\half [\Phi^{1,1}+\Phi^{2,2},\Phi^{1,1}+\Phi^{2,2}] +[\Phi^{0,0},\Phi^{1,1}+\Phi^{2,2}]) = 0$
by degree reasons. Furthermore, we have $[W,\Phi^{0,0}] = 0 = [\Phi^{0,0},\Phi^{0,0}]$, and therefore $\mathsf{P}( [W,\Phi]+\half [\Phi,\Phi]) = 0$. As a result, it suffices to solve the equation \eqref{eqn:pseudoMC}.

Now we look at the equation \eqref{eqn:pseudoMC}. Letting $\varXi = \Phi - \incoming$, we can solve
$\varXi + H([W,\Phi]+\half [\Phi,\Phi]) = 0$
iteratively in $\mathcal{H}^{*,*}_n\otimes_R (R/\mathbf{m}^k)$ by increasing the power in $\mathbf{m}^k$. We write $\varXi = \sum_{i=1} \varXi_i$ and $\Phi = \sum_{i} \Phi_i$ with $\varXi_i, \Phi_i \in \mathcal{H}^{*,*}_n\otimes (\mathbf{m}^{i}/\mathbf{m}^{i+1})$. We further decompose each $\varXi_i$ and $\Phi_i$ by its degree and write $\varXi_i= \sum_{j=0}^2 \varXi^{j,j}_i$ with $\varXi^{j,j}_i \in \mathcal{H}^{j,j}_n \otimes \mathbf{m}^{i}/\mathbf{m}^{i+1}$ and similarly $\Phi_i= \sum_{j=0}^2 \Phi^{j,j}_i$.

The first order terms are simply given by $\varXi^{1,1}_1 = -H [W,\incoming]$ and $\varXi^{0,0}_1 = -H[W,\varXi^{1,1}_1]$. In general, the $k$-th order equation is given by
\begin{equation}\label{eqn:MCiterationequation}
\varXi_k +H[W,\Phi_k] +\sum_{j+l = k} \half H [\Phi_j, \Phi_l] = 0,
\end{equation}
and $\varXi_k^{j,j}$ is uniquely determined by $\varXi_i$ with $i < k$ and $\varXi_k^{r,r}$ with $r>j$. In this way, the solution $\varXi$ to \eqref{eqn:pseudoMC} is uniquely determined.

There is a beautiful way to express the unique solution $\varXi$ as a sum of terms involving the input $\incoming$ over directed trees (reminiscent of a Feynman sum). To this end, let us introduce the notion of a {\em weighted $d$-pointed $k$-tree with ribbon structure}, whose definition originated from \cite{fukaya2003deformation} (see also \cite{kwchan-leung-ma}).

\begin{definition}\label{def:r_tree_operation}
Given a weighted ribbon $d$-pointed $k$-tree $\wrtr \in \wrtree{k}{d}$, we align the marked points $p_1,\dots,p_d$ (recall that marked points is itself an edge in $\partial^{-1}_{in}(\wrtr^{[0]}_{in})$) by $p_{i_1},\dots,p_{i_d}$ according to its cyclic ordering (or the clockwise orientation on $D$ if we use the embedding $|\wrtr| \hookrightarrow D$). We define the graded operator
$\mathfrak{l}_{\wrtr}: \mathcal{H}^{*,*}[2]^{\otimes d} \rightarrow \mathcal{H}^{*,*}[2]$
for input $\zeta_1,\dots,\zeta_d \in \mathcal{H}^{*,*}[2]$ by
\begin{enumerate}
\item
writing $\zeta_j = \sum_{I \subset \{1,\dots,n\}} \alpha_{j,I} u_{I}$, and extracting the term $\alpha_{j,i_j}u_{i_j}$ in $\zeta_j$ and aligning it as the input at $p_{i_j}$, where $u_{i_j} \in R_n$ is the monomial associated to the marked point $p_{i_j}$ in Definition \ref{def:weighted_tree},

\item
aligning the term $z^{m_{e}}$ at each incoming edge in $ e \in \wrtr^{[1]}_{in} = \partial^{-1}_{in}(\wrtr^{[0]}_{in}) \setminus \{p_1,\dots,p_d\}$,

\item
applying $[\cdot,\cdot]$ at each vertex in $\wrtr^{[0]}$ according to the ordering of the ribbon structure,

\item
applying the homotopy operator $-H$ to each edge in $\wrtr^{[1]}$.
\end{enumerate}
We then define $\mathfrak{l}_{k,d}: \mathcal{H}^{*,*}[2]^{\otimes d} \rightarrow \mathcal{H}^{*,*}[2]$ by
$\mathfrak{l}_{k,d} := \sum_{\wrtr \in \wrtree{k}{d}} \frac{1}{2^{d-1}}\mathfrak{l}_{\wrtr}.$
\end{definition}

	

Setting
\begin{equation}\label{eqn:solve_sum_over_trees}
\Phi := \incoming + \varXi =  \sum_{k,d\geq 1} \mathfrak{l}_{k,d}(\incoming, \ldots, \incoming)
\end{equation}
gives the unique solution to the equation \eqref{eqn:pseudoMC} which is obtained by recursively solving \eqref{eqn:MCiterationequation}.
Note that the sum above is finite because the ideal $\mathbf{m}_n$ is nilpotent.

\section{Proof of Theorem \ref{thm:main_theorem_intro_1} by asymptotic analysis}\label{sec:theorem}

In this section, we prove our main result (i.e. Theorem \ref{thm:main_theorem_intro_1}) by using asymptotic analysis to relate the Maurer-Cartan solution $\Phi \in \mathcal{H}^{*,*}_n$, which we constructed via the sum-over-tree formula in \eqref{eqn:solve_sum_over_trees} with the specified input $\incoming$ \eqref{eqn:input}, with the tropical disk counts defined in Section \ref{sec:counting}.

\begin{notation}\label{not:compacified_disk_moduli_P_i}
Given $n$ points $P_1, \dots, P_n \in M_\real$ in generic position, we use $\overline{\mathfrak{M}}_d^{\wrtr} (\mathcal{P},\Sigma,P_1,\dots,P_n)$ to denote the space $\vec{ev}^{-1}((P_{i_1},\dots,P_{i_d}) \times M_{\real})$ which gives a compactification of $\mathfrak{M}_d^{\wrtr} (\mathcal{P},\Sigma,P_1,\dots,P_n)$ for any weighted ribbon tree $\wrtr$. Here, $\vec{ev}$ is the evaluation map defined in Definition \ref{def:evaluation_map}, $P_{i_j}$ is the point such that the monomial weight at the marked point $p_j$ is $u_{i_j}$, and note that the subset $\{i_1, \dots, i_d\} \subset \{1, \dots, n\}$ is determined by the weight of $\wrtr$.
\end{notation}

\begin{definition}\label{def:Q_edge}
Given a weighted ribbon $d$-pointed $k$-tree $\wrtr \in \wrtree{k}{d}$ with $u_\wrtr \neq 0$, we associate to each of its edges $e \in \bar{\wrtr}$ a tropical polyhedral subset $Q_{e} \subset M_\real$ as follows.
For each incoming edge $e \in \wrtr^{[1]}_{in}$, we assign $Q_{e} = M_\real$, and for each marked point $p_j$ we assign $Q_{p_j} = P_{i_j}$ where the monomial weight at $p_j$ is $u_{i_j}$. We then inductively assign a (possibly empty) tropical polyhedral subset $Q_e$ to each edge $e \in \wrtr^{[1]}$ by the following rule:
	
If $e_1$ and $e_2$ are two incoming edges meeting at a vertex $v$ with an outgoing edge $e_3$ for which $Q_{e_1}$ and $Q_{e_2}$ are defined beforehand, we set
$Q_{e_3} := (Q - \real_{\geq 0} \bar{m}_{e_3})$
if both $Q_{e_1}$ and $Q_{e_1}$ are non-empty and they intersect transversally at $Q := Q_{e_1} \cap Q_{e_2}$, and
$Q_{e_3} := \emptyset$
otherwise. 
	
We denote the tropical polyhedral subset associated to the unique outgoing edge $e_o$ by $Q_{\wrtr}$.
\end{definition}

We start with a combinatorial lemma concerning the tropical polyhedral subset $Q_\wrtr$.
\begin{lemma}\label{lem:asy_support_combinatorics}
If $MI(\wrtr) <0$, then both $Q_\wrtr$ and $\overline{\mathfrak{M}}_d^{\wrtr} (\mathcal{P},\Sigma,P_1,\dots,P_n)$ are empty. For $MI(\wrtr)=0$ or $2$ and $\text{Mult}(\wrtr) \neq 0$, $ev_o$ is a diffeomorphism onto its image and we have $Q_\wrtr = ev_o(\overline{\mathfrak{M}}_d^{\wrtr} (\mathcal{P},\Sigma,P_1,\dots,P_n))$, which is of dimension $\frac{MI(\wrtr)}{2}+1$ if $Q_{\wrtr} \neq \emptyset$.
\end{lemma}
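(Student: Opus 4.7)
The plan is to prove by induction on subtree structure a refined claim: for each edge $e$ of $\wrtr$, the polyhedral subset $Q_e$ from Definition \ref{def:Q_edge} coincides with the image $ev_{\partial_o(e)}(\overline{\mathfrak{M}}^{\wrtr_e}(\mathcal{P},\Sigma,P_1,\dots,P_n))$, where $\wrtr_e$ denotes the subtree of $\wrtr$ truncated at $e$ (with $e$ as its outgoing edge) and $ev_{\partial_o(e)}$ is evaluation at the outgoing endpoint of $e$; moreover, when every trivalent vertex of $\wrtr_e$ has nonvanishing multiplicity, this evaluation is a diffeomorphism onto its image. Applied to $e = e_o$, this refined claim will yield all three assertions of the lemma via Lemma \ref{lem:moduli_space_dimension}, the generic position hypothesis, and a codimension count.

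The base cases are immediate: for an edge $e \in \wrtr^{[1]}_{in}$, the sub-disk is a single ray of slope $-\bar{m}_e$ with free stop, so the evaluation image equals $M_\real = Q_e$; for a marked point $p_j$ with monomial $u_{i_j}$, the sub-disk is the constant map at $P_{i_j}$, giving $\{P_{i_j}\} = Q_{p_j}$. For the inductive step at a trivalent vertex $v$ with incoming edges $e_1, e_2$ and outgoing edge $e_3$, the sub-moduli $\overline{\mathfrak{M}}^{\wrtr_{e_3}}(\mathcal{P},\Sigma,P_1,\dots,P_n)$ fibers over $Q_{e_1} \cap Q_{e_2}$ (the locus where the sub-disk endpoints meet at $v$) with one-dimensional fibers parametrized by $s_{e_3} \in \real_{\leq 0}$. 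Using the identity $\varsigma(\partial_o(e_3)) = \varsigma(v) + s_{e_3} \bar{m}_{e_3}$ derived from Definition \ref{def:tropical_curve}(3), the image of $ev_{\partial_o(e_3)}$ becomes $(Q_{e_1} \cap Q_{e_2}) - \real_{\geq 0} \bar{m}_{e_3}$, which matches $Q_{e_3}$. The hypothesis $\text{Mult}_v(\wrtr) \neq 0$ forces $\bar{m}_{e_1}, \bar{m}_{e_2}$ to be linearly independent, and together with the inductive diffeomorphism property and the generic position of $P_1,\dots,P_n$, this ensures $Q_{e_1}, Q_{e_2}$ intersect transversally and $\bar{m}_{e_3}$ is transverse to their intersection, so the diffeomorphism property propagates through $v$.

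For $MI(\wrtr) < 0$, Lemma \ref{lem:moduli_space_dimension} gives $\mathfrak{M}_d^{\wrtr}(\mathcal{P},\Sigma,P_1,\dots,P_n)/\real_+ = \emptyset$, and the boundary strata of the compactification are excluded by generic position (Definition \ref{def:generic_position}) since on each such stratum $\vec{ev}$ is degenerate by dimension; hence $\overline{\mathfrak{M}}_d^{\wrtr}(\mathcal{P},\Sigma,P_1,\dots,P_n) = \emptyset$ and consequently $Q_\wrtr = \emptyset$ by the inductive identification. For $MI(\wrtr) \in \{0,2\}$ with $\text{Mult}(\wrtr) \neq 0$, the same identification combined with the propagated diffeomorphism property yields the second assertion. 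The final dimension formula comes from a codimension count: each of the $d$ marked leaves contributes codimension $2$, each of the $k$ non-marked leaves contributes codimension $0$, and each of the $k+d-1$ trivalent vertices subtracts one, giving $\mathrm{codim}(Q_\wrtr) = 2d - (k+d-1) = d-k+1 = 1 - MI(\wrtr)/2$, so $\dim Q_\wrtr = MI(\wrtr)/2 + 1$.

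The main technical obstacle I anticipate lies in the transversality at trivalent vertices: the propagation of the diffeomorphism property requires both that $Q_{e_1} \cap Q_{e_2}$ achieves its expected codimension in $M_\real$ and that $\bar{m}_{e_3}$ is not tangent to this intersection, and both facts must be deduced from $\text{Mult}(\wrtr) \neq 0$ together with the generic position hypothesis; this entails a careful tracking of the tangent structure of the images $Q_{e_1}, Q_{e_2}$ along the induction, ensuring that the codimension count is achieved generically and not by exceptional coincidences excluded by Definition \ref{def:generic_position}.
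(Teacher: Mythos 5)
Your proposal follows the same basic strategy as the paper's proof: an induction on tree structure, splitting the tree at the root vertex and identifying $Q_e$ for each edge with the image of the evaluation map on the truncated subtree's moduli space (the paper's decomposition formula \eqref{eqn:moduli_space_splitting} is precisely the fiber-product structure you describe). Your handling of the $MI(\wrtr)<0$ case via Lemma~\ref{lem:moduli_space_dimension} plus a boundary-stratum argument and your explicit codimension count are somewhat differently packaged from the paper's presentation (the paper keeps the emptiness claim entirely inside the induction, establishing that $Q_{\wrtr_1}\cap Q_{\wrtr_2}=\emptyset$ by genericity when a subtree has too low a Maslov index), but the substance is the same and you correctly flag the transversality-at-trivalent-vertices issue that the paper handles by combining $\mathrm{Mult}_v(\wrtr)\neq 0$ with Definition~\ref{def:generic_position}. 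In short, this is essentially the paper's argument.
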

\begin{proof}	
We prove by induction on the number of vertices in $\wrtr^{[0]}$. The initial case is when $\wrtr^{[0]} = \emptyset$, i.e. when there are no trivalent vertices. Then the only possible trees are the ones with a unique edge $e$. In this case we have $MI(\wrtr)=2$ and $Q_\wrtr = M_\real$, and the lemma holds automatically.
	
For the induction step, suppose we have a tree $\wrtr$ with $\wrtr^{[0]} \neq \emptyset$ with the unique root vertex $v_r \in \wrtr^{[0]}$ connecting to the outgoing edge $e_o$ with two incoming edges $e_1$ and $e_2$. We split $\wrtr$ at $v_r$ to obtain two trees $\wrtr_1, \wrtr_2$ with outgoing edges $e_1, e_2$ and $k_1, k_2$ incoming edges, $d_1$ and $d_2$ marked points respectively. Then we have the decomposition
\begin{equation}\label{eqn:moduli_space_splitting}
\left( \overline{\mathfrak{M}}_{d_1}^{\wrtr_1} (\mathcal{P},\Sigma,P_1,\dots,P_n) {}_{ev_o}\times_{ev_o} \overline{\mathfrak{M}}_{d_2}^{\wrtr_2} (\mathcal{P},\Sigma,P_1,\dots,P_n) \right) \times \real_{\geq 0} \cdot (-\bar{m}_{\wrtr})
 =\overline{\mathfrak{M}}_{d}^{\wrtr} (\mathcal{P},\Sigma,P_1,\dots,P_n),
\end{equation}
and there are two cases to consider.
	
The first case is when one of the incoming edges, say $e_2$, is an edge corresponding to a marked point so that $k_2 = 0$ and $d_2 =1$. In this case $\wrtr_2$ is not a weighted tree in the sense of Definition \ref{def:weighted_tree}, but we can still take $Q_{\wrtr_2}$ to be the point $P_{e_2}$ associated to $e_2$.

If $MI(\wrtr_1) \leq 0$, then by the induction hypothesis and the generic assumption (Definition \ref{def:generic_position}), $Q_{\wrtr_1}$ cannot intersect $Q_{\wrtr_2}$ transversally and hence $Q_\wrtr = \emptyset$. On the other hand we have $MI(\wrtr)<0$, so $\mathfrak{M}_d^{\wrtr} (\mathcal{P},\Sigma,P_1,\dots,P_n) = \emptyset$.
	
If $MI(\wrtr_1) = 2$, then $Q_{\wrtr_1}$ intersect $Q_{\wrtr_2}$ transversally at $Q_{\wrtr_2}$ automatically if $Q_{\wrtr_2}$ lies on $Q_{\wrtr_1}$, and otherwise both $Q_{\wrtr}=\mathfrak{M}_d^{\wrtr} (\mathcal{P},\Sigma,P_1,\dots,P_n) = \emptyset$. In this case $MI(\wrtr) = 0$, $\text{Mult}(\wrtr) = \text{Mult}(\wrtr_1)$ and $m_\wrtr = m_{\wrtr_1}$.
Assuming $\text{Mult}(\wrtr) = \text{Mult}(\wrtr_1) \neq 0$, we have $Q_{\wrtr_1} = (ev_o)_*(\overline{\mathfrak{M}}_{d_1}^{\wrtr_1} (\mathcal{P},\Sigma,P_1,\dots,P_n))$ by the induction hypothesis and the above decomposition becomes
\begin{align*}
\left(\overline{\mathfrak{M}}_{d_1}^{\wrtr_1} (\mathcal{P},\Sigma,P_1,\dots,P_n) \cap (ev_{\wrtr_1,o})^{-1}(Q_{\wrtr_2}) \right) \times \real_{\geq 0} \cdot (-\bar{m}_{\wrtr_1}) = \overline{\mathfrak{M}}_{d}^{\wrtr} (\mathcal{P},\Sigma,P_1,\dots,P_n),
\end{align*}
implying that $Q_\wrtr = ev_o(\overline{\mathfrak{M}}_{d}^{\wrtr} (\mathcal{P},\Sigma,P_1,\dots,P_n)) $, and hence the dimension of $Q_\wrtr $ is exactly given by $\frac{MI(\wrtr)}{2}+1$ \footnote{We indeed have $(ev_{\wrtr_1,o})^{-1}(Q_{\wrtr_2}) \in  \mathfrak{M}_{d_1}^{\wrtr_1} (\mathcal{P},\Sigma,P_1,\dots,P_n)$ due to the generic assumption on $P_1,\dots,P_n$'s.}.
	
The second case is when both $\wrtr_1$ and $\wrtr_2$ have $k_1,k_2 \geq 1$. In this case we have $MI(\wrtr) = MI(\wrtr_1) + MI(\wrtr_2)$ and the two moduli spaces $\overline{\mathfrak{M}}_{d_i}^{\wrtr_i} (\mathcal{P},\Sigma,P_1,\dots,P_n)$ have dimensions $MI(\wrtr_i)/2 +1$ respectively if they are non empty. Using the decomposition in equation \eqref{eqn:moduli_space_splitting},
we notice that if $\overline{\mathfrak{M}}_{d_i}^{\wrtr_i} (\mathcal{P},\Sigma,P_1,\dots,P_n) = \emptyset$ for $i = 1$ or $2$, then $Q_{\wrtr}=ev_o(\overline{\mathfrak{M}}_{d}^{\wrtr} (\mathcal{P},\Sigma,P_1,\dots,P_n)) = \emptyset$. So $\overline{\mathfrak{M}}_{d}^{\wrtr} (\mathcal{P},\Sigma,P_1,\dots,P_n) = \emptyset $ if $MI(\wrtr)<0$. Therefore it remains to consider the cases when $MI(\wrtr_i) = 0,2$ and $MI(\wrtr) = 0,2$.
	
Assuming $\text{Mult}(\wrtr) = \text{Mult}(\wrtr_1) \text{Mult}(\wrtr_2) \text{Mult}_{v_r}(\wrtr) \neq 0$, from the induction hypothesis we have $Q_{\wrtr_i} = ev_o(\overline{\mathfrak{M}}_{d_i}^{\wrtr_i} (\mathcal{P},\Sigma,P_1,\dots,P_n))$ for $i = 1, 2$. Since we have $\text{Mult}_{v_r}(\wrtr) \neq 0$, $Q_{\wrtr_1}$ and $Q_{\wrtr_2}$ can only intersect transversally. Therefore, if $\text{Mult}(\wrtr) \neq 0$, then we have $Q_{\wrtr_1}$ and $Q_{\wrtr_2}$ intersecting transversally and $Q_\wrtr = Q_{\wrtr_1} \cap Q_{\wrtr_2} - \real_{\geq0} \bar{m}_{\wrtr} = (ev_o)_* (\overline{\mathfrak{M}}_{d}^{\wrtr} (\mathcal{P},\Sigma,P_1,\dots,P_n))$  from the decomposition \eqref{eqn:moduli_space_splitting} and the Definition \ref{def:Q_edge} for $Q_\wrtr$. Finally, by the generic assumption on $P_1,\dots,P_n$, $Q_\wrtr$ has dimension $\frac{MI(\wrtr)}{2}+1$ whenever it is nonempty.
\end{proof}

\begin{lemma}\label{lem:Q_wrtr_and_homotopy}
There exists a large enough $R>0$ such that the half space $U^{m}_{1,+} $ in Notations \ref{not:perpendicular_hyperplane} contains $\text{Sing}(\mathscr{D})$ and also the tropical polyhedral subset $Q_\wrtr$ for any $\wrtr$ with $m_\wrtr = m$, $MI(\wrtr) = 0$ or $2$, $\text{Mult}(\wrtr)\neq 0$, $u_\wrtr \neq 0$ and with at least one marked point.
\end{lemma}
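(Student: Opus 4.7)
The plan is to reduce the lemma to a finite check, since only finitely many $\wrtr$'s satisfy the stated hypotheses, and then to bound each $Q_\wrtr$ together with $\mathrm{Sing}(\mathscr{D})$ by a single half-space. The two ingredients are (a) a combinatorial finiteness statement, and (b) a geometric bound on how far each individual $Q_\wrtr$ can extend in the $+\bar{m}$ direction.

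First I would establish finiteness. By Definition \ref{def:weighted_tree}, the balancing rule forces $m_\wrtr = \sum_{e \in \wrtr^{[1]}_{in}} m_e$ (over non-marked leaves) and each such $m_e$ is some $m_\rho$ with $\rho \in \Sigma(1)$. Because the decomposition of $m \in \mathcal{P}$ along the generators $\{m_\rho\}$ is unique, the multiset of non-marked leaf weights is determined once $m_\wrtr = m$ is fixed, and in particular $k$ is bounded. The condition $u_\wrtr \neq 0$ in $R_n$ forces the marked-point indices to be distinct, so $d \leq n$, and $MI(\wrtr) \in \{0,2\}$ gives $k = d$ or $k = d+1$. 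Up to isomorphism there are then only finitely many combinatorial types $\wrtr$ satisfying all the hypotheses. Applying the same reasoning (without fixing $m_\wrtr$) to the walls coming from $MI = 0$ trees shows that $\mathscr{D}$ has finitely many walls $\mathbf{w}_{\wrtr'}$, each with support a single ray of the form $P_{i_j} - \real_{\geq 0}\bar{m}$ plus bounded combinatorial data, so $\mathrm{Sing}(\mathscr{D})$ is a finite set of points; hence it lies inside $U^m_{1,+}$ whenever $R$ is chosen larger than $\max_{x \in \mathrm{Sing}(\mathscr{D})} g_0(\bar{m},x)$.

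The main work is then to produce a finite bound $C_\wrtr$ with $Q_\wrtr \subset \{x : g_0(\bar{m}, x) \leq C_\wrtr\}$ for each admissible $\wrtr$, after which one takes $R$ larger than both $\max_\wrtr C_\wrtr$ and the bound on $\mathrm{Sing}(\mathscr{D})$. Using Lemma \ref{lem:asy_support_combinatorics}, every point of $Q_\wrtr$ is realized as $ev_o(\varsigma)$ for some $\varsigma \in \overline{\mathfrak{M}}_d^{\wrtr}(\mathcal{P},\Sigma,P_1,\dots,P_n)$, and by propagating the relation $\varsigma(\partial_{in}(e)) = \varsigma(\partial_o(e)) + |s_e|\bar{m}_e$ from the marked points down to the outgoing vertex $v_o$ one obtains an explicit affine parameterization of $Q_\wrtr$ in terms of the marked-point positions $P_{i_j}$ and the non-negative edge-length parameters $|s_e|$. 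This makes $Q_\wrtr$ a polyhedral image of a closed orthant, and I would identify its recession cone with the non-negative span of certain $-\bar{m}_e$ coming from internal edges that lie on a subtree constrained by the marked points.

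The hard part will be controlling this recession cone. The desired bound amounts to showing that every asymptotic direction of $Q_\wrtr$ pairs non-positively with $\bar{m}$; equivalently, that the edges whose lengths can grow without bound in a configuration realizing $P_1,\dots,P_n$ at the marked vertices produce rays $-\bar{m}_e$ lying in the half-space $\{g_0(\bar{m},\cdot) \leq 0\}$. This should be established inductively on the tree by combining the balancing condition $\bar{m}_{e_3} = \bar{m}_{e_1} + \bar{m}_{e_2}$ at each trivalent vertex with the transversality condition $\mathrm{Mult}(\wrtr)\neq 0$, which forbids accidental coincidences between subtrees. Once this geometric control is available, each $C_\wrtr$ is expressed as a maximum of $g_0(\bar{m}, P_{i_j})$ plus a bounded combinatorial correction, finiteness from the first step completes the argument, and taking $R$ to be any real number exceeding all these bounds simultaneously yields the conclusion.
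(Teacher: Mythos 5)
Your two-part strategy -- finiteness of combinatorial types plus a bound on each individual $Q_\wrtr$ in the relevant half-space -- matches the paper's, and your finiteness argument (unique decomposition of $m$ along the $m_\rho$'s fixes the multiset of non-marked leaf weights, hence $k$; nilpotency of $R_n$ forces $d \leq n$) is correct and even a bit more explicit than the paper's, which simply asserts finiteness. Your recession-cone reformulation is also sound: since $\mathrm{recc}(Q_\wrtr)=\mathrm{recc}(Q_{\wrtr_1}\cap Q_{\wrtr_2})+\real_{\geq 0}(-\bar m_\wrtr)$ and $\bar m_\wrtr = \bar m_{\wrtr_1}+\bar m_{\wrtr_2}$, the inductive claim $\mathrm{recc}(Q_{\wrtr_i})\subset\{g_0(\bar m_{\wrtr_i},\cdot)\leq 0\}$ does propagate (though your "equivalently" is too strong -- controlling the recession cone is only the asymptotic half of the problem; the compact part must also be bounded, as you acknowledge later).

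The genuine gap is that you never actually run the induction you identify as "the hard part." You split the problem into (a) a qualitative cone statement and (b) a quantitative bound on "a bounded combinatorial correction," and leave both as assertions. The paper resolves this with a single clean inductive bound that handles the qualitative and quantitative parts simultaneously: assume $Q_{\wrtr'}\subset U^{m_{\wrtr'}}_{1,+}$ with constant $R_N$ for every tree $\wrtr'$ with at most $N$ trivalent vertices. Split $\wrtr$ at its root into $\wrtr_1,\wrtr_2$. For $x\in Q_{\wrtr_1}\cap Q_{\wrtr_2}$ the induction hypothesis gives $g_0(-\bar m_{\wrtr_1},x)\geq -R_N$ and $g_0(-\bar m_{\wrtr_2},x)\geq -R_N$, so $g_0(-\bar m_\wrtr,x)\geq -2R_N$; translating along $-\real_{\geq 0}\bar m_\wrtr$ only increases $g_0(-\bar m_\wrtr,\cdot)$, so $Q_\wrtr\subset U^{m_\wrtr}_{1,+}$ with $R_{N+1}=2R_N$. (The base case, and the case where one incoming edge is a marked point, reduce to the initial requirement $\mathrm{Sing}(\mathscr{D})\subset U^{m}_{1,+}$, since those marked points lie in $\mathrm{Sing}(\mathscr{D})$.) Finiteness then bounds $N$, giving a uniform $R$. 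You have the right scaffolding and correctly locate the pressure point, but without the explicit doubling argument (or some other concrete inductive estimate) the proposal does not yet constitute a proof.
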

\begin{proof}
The existence of a fixed $R$ depends on the finiteness of the total number of weighted ribbon trees $\wrtr$ (for arbitrary number of marked points and $k = |\wrtr^{[0]}_{in}|$) with $MI(\wrtr) = 0 ,2$, $\text{Mult}(\wrtr) \neq 0$ and  $u_\wrtr \neq 0$. We prove by induction on the number $N$ of vertices in $\wrtr^{[0]}$ the existence of $R_N>0$ satisfying the lemma for all $\wrtr$ with $|\wrtr^{[0]}|\leq N$.

The initial case concerns the tree $\wrtr$ with an unique internal vertex $v_r$, with two incoming edges $e_1$ and $e_2$, and one outgoing edge $e_o$ in clockwise orientation. Furthermore, we have $e_1 \in \wrtr^{[1]}_{in}$ and $e_2$ is an edge corresponding to a marked point with monomial weight $u_{e_2}$. In this case we have $MI(\wrtr) = 0$ and $Q_{\wrtr} = P_{e_2} - \real_{\geq 0} \cdot \bar{m}_{\wrtr}$ which is lying in $U^{m}_{1,+}$ as we required $\text{Sing}(\mathscr{D}) \subset U^{m}_{1,+}$ when we chose $U^{m}_{1,+}$ in Notations \ref{not:perpendicular_hyperplane}.
	
For the induction step, suppose we have a tree $\wrtr$ with $|\wrtr^{[0]}| = N+1$ with the unique root vertex $v_r \in \wrtr^{[0]}$ connecting to the outgoing edge $e_o$ with two incoming edges $e_1$ and $e_2$. We split $\wrtr$ at $v_r$ to obtain two trees $\wrtr_1, \wrtr_2$ with outgoing edges $e_1, e_2$ and $k_1, k_2$ incoming edges, $d_1$ and $d_2$ marked points respectively. There are two cases to consider (as in the proof of Lemma \ref{lem:asy_support_combinatorics}).
	
This first case is when one of the incoming edges, say $e_2$, is an edge corresponding to a marked point so that $k_2 = 0$ and $d_2 =1$. We let $Q_{\wrtr_2} = P_{e_2}$ to be the corresponding marked point. From the proof of Lemma \ref{lem:asy_support_combinatorics}, we know that we must have $MI(\wrtr_1) = 2$ and $MI(\wrtr) = 0$ for $Q_\wrtr \neq \emptyset$. In this case $Q_\wrtr = P_{e_2} - \real_{\geq 0} \cdot \bar{m}_\wrtr$ and we have $Q_\wrtr \subset  U^{m}_{1,+}$ by the same reason as in the initial step.
	
In the second case we have both $\wrtr_1$ and $\wrtr_2$ having $k_1,k_2 \geq 1$, and we have $MI(\wrtr) = MI(\wrtr_1) + MI(\wrtr_2)$. Assuming $Q_\wrtr \neq \emptyset$, then one of the $Q_{\wrtr_1}, Q_{\wrtr_2}$ is a ray or a line, and we assume that it is $Q_{\wrtr_1}$, with $MI(\wrtr_2) =0,2$. Therefore for any point $x \in Q_{\wrtr_1} \cap Q_{\wrtr_2}$ we have the relations $g_0(-\bar{m}_{\wrtr_1},x) \geq -R_N$ and $g_0(-\bar{m}_{\wrtr_2},x) \geq -R_N$, and hence $g_0(-\bar{m}_{\wrtr},x) \geq -2 R_N$ \footnote{Here $g_0$ is the linear metric introduced in Notation \ref{not:perpendicular_hyperplane}.}. Therefore by taking $R_{N+1} = 2R_N$, we have $g_0(-\bar{m}_{\wrtr},x) \geq -R_{N+1}$ and hence $Q_\wrtr = Q_{\wrtr_1} \cap Q_{\wrtr_2} - \real_{\geq 0} \cdot \bar{m}_{\wrtr} \subset U^{m}_{1,+}$ for $\bar{m} \neq 0$ as desired.
\end{proof}

We are now ready to prove the key lemma which relates our Maurer-Cartan solution with the locus $Q_\wrtr$ traced out by the stops of the tropical disks introduced in Definition \ref{def:Q_edge}.

\begin{notation}\label{not:alpha_operation}
Given a weighted ribbon $d$-pointed $k$-tree $\wrtr$, we define a differential form $\alpha_{\wrtr} \in \mathcal{W}^{0}_*$ as follows. First we align the marked points $p_1,\dots,p_d$ (recall that a marked point is itself an edge in $\partial^{-1}_{in}(\wrtr^{[0]}_{in})$) by $p_{i_1},\dots,p_{i_d}$ according to its cyclic ordering. Then $\alpha_{\wrtr}$ is the output of the following procedure:
\begin{enumerate}
\item
aligning $\delta_{P_{i_j}}$ as the input at the edge corresponding to the marked point $p_{i_j}$, if the monomial weight associated to $p_{i_j}$ is $u_{i_j}$,
\item
aligning the constant $1$ at each incoming edge in $ e \in \wrtr^{[1]}_{in} = \partial^{-1}_{in}(\wrtr^{[0]}_{in}) \setminus \{p_1,\dots,p_d\}$,
\item
applying the wedge product $\wedge$ at each vertex in $\wrtr^{[0]}$ according to the ordering of the ribbon structure,
\item
applying the homotopy operator $-H$ to each edge in $\wrtr^{[1]}$.
\end{enumerate}
\end{notation}

\begin{definition}\label{def:operator_partity}
Given a weighted ribbon $d$-pointed $k$-tree $\wrtr \in \wrtree{k}{d}$ with $\text{Mult}(\wrtr)\neq 0$, we set
$(-1)^{\chi(\wrtr)} := \prod_{v \in \wrtr^{[0]}} (-1)^{\chi(\wrtr,v)},$
where $(-1)^{\chi(\wrtr,v)}$ is defined by the rules (with the convention that $(-1)^{\chi(\wrtr)} = 1$ if $\wrtr^{[0]} = \emptyset$): if $v$ is connected to a marked point we set $\chi(\wrtr,v) = 0$, and $(-1)^{\chi(\wrtr,v)}$ is defined inductively along the tree $\wrtr$ for each trivalent vertex $v$ not connecting to any marked point $p_i$'s (attached to two incoming edges $e_1, e_2$ and one outgoing edge $e_3$ so that $e_1,e_2,e_3$ are arranged in the clockwise orientation) by comparing the orientation of the ordered basis $\{-\bar{m}_{e_1}, -\bar{m}_{e_2}\}$ with that of $B_0$.
\end{definition}

\begin{lemma}\label{lem:tree_support_lemma}
Let $\wrtr \in \wrtree{k}{d}$ be a weighted ribbon $d$-pointed $k$-tree. Then we have
\begin{equation*}
\mathfrak{l}_{\wrtr}(\incoming,\dots,\incoming) = \left\{
\begin{array}{ll}
0  &  \text{if $MI(\wrtr)\neq 0,2$ or $Q_\wrtr = \emptyset$ or $\text{Mult}(\wrtr) =0$},\\
(-1)^{\chi(\wrtr)} \alpha_{\wrtr} \text{Mult}(\wrtr)z^{m_\wrtr} u_{\wrtr}  &  \text{if $MI(\wrtr)=2$ and $Q_\wrtr \neq \emptyset$ and $\text{Mult}(\wrtr) \neq 0$},\\
(-1)^{\chi(\wrtr)} \alpha_{\wrtr} k_{\wrtr} \text{Mult}(\wrtr)z^{m_\wrtr} \check{\partial}_{n_{Q_\wrtr}} u_{\wrtr} & \text{if $MI(\wrtr)=0$ and $Q_\wrtr \neq \emptyset$ and $\text{Mult}(\wrtr) \neq 0$}
\end{array}
\right.
\end{equation*}
in $\mathcal{H}^{*,*}_n$, where $\alpha_{\wrtr} \in \mathcal{W}^{s_\wrtr}_{Q_{\wrtr}}$ in which $s_\wrtr := 1 - \frac{MI(\wrtr)}{2}$, and $n_{Q_\wrtr}$ is the clockwise oriented normal to the ray or line $Q_\wrtr$ when $Q_\wrtr \neq \emptyset$ in the case $MI(\wrtr) = 0$.
\end{lemma}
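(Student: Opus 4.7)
The plan is to induct on $|\wrtr^{[0]}|$, exploiting the recursive structure of $\mathfrak{l}_\wrtr$: cutting $\wrtr$ at its root vertex $v_r$ produces two clockwise-ordered subtrees $\wrtr_1, \wrtr_2$ (with the convention that a marked-point leaf $p_j$ is treated as a degenerate ``subtree'' whose output is the input $\incoming^{(i_j)} = u_{i_j}\delta_{P_{i_j}}\check{\partial}_1 \wedge \check{\partial}_2 \in \mathcal{W}^2_{P_{i_j}}$), and the identity
\begin{equation*}
\mathfrak{l}_\wrtr(\incoming,\dots,\incoming) = -H\bigl[\mathfrak{l}_{\wrtr_1}(\incoming,\dots), \mathfrak{l}_{\wrtr_2}(\incoming,\dots)\bigr]
\end{equation*}
reduces everything to a single Lie bracket followed by the homotopy operator $H = H_{m_\wrtr}$ of Notation \ref{not:perpendicular_hyperplane}. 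The base case $|\wrtr^{[0]}| = 0$ is the single-edge tree with $(k,d)=(1,0)$, for which $\mathfrak{l}_\wrtr = z^{m_\rho}$ satisfies the claimed formula trivially with $\alpha_\wrtr = 1 \in \mathcal{W}^0_{M_\real}$, $\text{Mult}(\wrtr) = 1$, $u_\wrtr = 1$, and $Q_\wrtr = M_\real$.

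The vanishing assertions then follow from Lemmas \ref{lem:support_product} and \ref{lem:asy_support_combinatorics}: if either $\mathfrak{l}_{\wrtr_i} = 0$ by induction, or $Q_{\wrtr_1}$ and $Q_{\wrtr_2}$ fail to intersect transversally---either because $\text{Mult}_{v_r}(\wrtr) = |\det(\bar{m}_{e_1},\bar{m}_{e_2})| = 0$ or, in the marked-point case, because the point $P_{i_j}$ misses $Q_{\wrtr_1}$ by genericity---then $\alpha_{\wrtr_1}\wedge\alpha_{\wrtr_2}$ lands in $\mathcal{W}^{-\infty}$ and kills $\mathfrak{l}_\wrtr$ modulo the ideal $\mathcal{I}^{*,*}_n$. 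In the nondegenerate case Lemma \ref{lem:support_product} gives $\alpha_{\wrtr_1}\wedge\alpha_{\wrtr_2} \in \mathcal{W}^{s_{\wrtr_1}+s_{\wrtr_2}}_{Q_{\wrtr_1}\cap Q_{\wrtr_2}}$, and Lemmas \ref{integral_lemma}--\ref{lem:integral_lemma_modified} applied to $H$---together with Lemma \ref{lem:Q_wrtr_and_homotopy} which places $Q_{\wrtr_1}\cap Q_{\wrtr_2}$ on the correct side of the hyperplane $U^{m_\wrtr}_1$---produce $\alpha_\wrtr \in \mathcal{W}^{s_{\wrtr_1}+s_{\wrtr_2}-1}_{Q_\wrtr}$ supported on $Q_\wrtr = (Q_{\wrtr_1}\cap Q_{\wrtr_2}) - \real_{\geq 0} \bar{m}_\wrtr$, exactly as in Definition \ref{def:Q_edge}. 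The additivity $MI(\wrtr) = MI(\wrtr_1) + MI(\wrtr_2)$ across the split then gives the weight identity $s_{\wrtr_1}+s_{\wrtr_2}-1 = 1 - \tfrac{MI(\wrtr)}{2} = s_\wrtr$.

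The main obstacle is the algebraic bookkeeping at the root. The absolute value in $\text{Mult}_{v_r}(\wrtr)$ must be reconciled with the signed determinant appearing in the Lie bracket formula \eqref{vertex_lie_algebra}, with the sign discrepancy absorbed into $(-1)^{\chi(\wrtr,v_r)}$ through the orientation of $\{-\bar{m}_{e_1}, -\bar{m}_{e_2}\}$ relative to the ambient orientation of $B_0$. The truly delicate point is that in the $MI(\wrtr) = 0$ case, the output must be shown to lie parallel to the clockwise primitive normal $n_{Q_\wrtr}$ with integer coefficient precisely $k_\wrtr \text{Mult}_{v_r}(\wrtr)$, where the factor $k_\wrtr$ comes from the primitivity normalization $\bar{m}_\wrtr = k_\wrtr \hat{m}_\wrtr$. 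This entails a case analysis over the polyvector degrees at $v_r$, including the vector--vector bracket computed via
\begin{equation*}
[z^{m_{\wrtr_1}}\check{\partial}_{n_1}, z^{m_{\wrtr_2}}\check{\partial}_{n_2}] = z^{m_\wrtr}\check{\partial}_{(\bar{m}_{\wrtr_2},n_1)n_2 - (\bar{m}_{\wrtr_1},n_2)n_1}
\end{equation*}
and the scalar--bivector (marked-point) bracket reduced via the Schouten--Nijenhuis identity $[v,f] = v(f)$, with the $MI(\wrtr)=2$ case handled by an analogous but simpler scalar--vector computation that completes the induction.
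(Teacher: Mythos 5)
Your proposal follows essentially the same inductive scheme as the paper's own proof: induction on $|\wrtr^{[0]}|$, the single-edge base case, the recursive split at the root vertex $v_r$, the case split between a marked-point leaf and two genuine subtrees, the invocation of Lemmas \ref{lem:support_product}, \ref{lem:integral_lemma_modified} and \ref{lem:Q_wrtr_and_homotopy} to control asymptotic support under $\wedge$ and $H$, the weight bookkeeping $s_{\wrtr_1}+s_{\wrtr_2}-1 = s_\wrtr$, and the bracket computations at the root (including the $[z^{m},\check{\partial}_1\wedge\check{\partial}_2]$ bivector case producing $k_\wrtr n_{Q_\wrtr}$, and the absorption of the sign of $\det(\bar m_{e_1},\bar m_{e_2})$ into $(-1)^{\chi(\wrtr,v_r)}$). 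The only cosmetic difference is that you package the recursion as the single clean identity $\mathfrak{l}_\wrtr = -H[\mathfrak{l}_{\wrtr_1},\mathfrak{l}_{\wrtr_2}]$ with a marked point treated as a degenerate leaf subtree, whereas the paper unrolls this into two explicit cases; this is a presentational choice, not a different argument.
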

\begin{proof}
First of all, from Notations \ref{not:alpha_operation} we can see that the degree of the form $\alpha_\wrtr$ is exactly given by $s_{\wrtr}$ which can only be $0$ or $1$ since the operator associated to the outgoing edge is a homotopy operator and it decreases the degree by $1$. Therefore we notice that $\alpha_{\wrtr} \neq 0$ except when $MI(\wrtr) = 0$ or $2$.
	
Once again, we prove by induction on the number of vertices in $\wrtr^{[0]}$. The initial case is when $\wrtr^{[0]} = \emptyset$ and the only possible trees the ones with a unique edge $e$. In this case, we have $MI(\wrtr)=2$, $Q_\wrtr = M_\real$ and $\mathfrak{l}_{\wrtr}(\incoming,\dots,\incoming) = z^{m_\wrtr}$, so the lemma holds.
	
For the induction step, suppose we have a tree $\wrtr$ with $\wrtr^{[0]} \neq \emptyset$ with the unique root vertex $v_r \in \wrtr^{[0]}$ connecting to the outgoing edge $e_o$ with two incoming edges $e_1$ and $e_2$. We split $\wrtr$ at $v_r$ to obtain two trees $\wrtr_1, \wrtr_2$ with outgoing edges $e_1, e_2$ and $k_1, k_2$ incoming edges, $d_1$ and $d_2$ marked points respectively. As before, there are two possible scenarios.
	
This first case is when one of the incoming edges, say $e_2$, is an edge corresponding to a marked point so that $k_2 = 0$ and $d_2 =1$. In this case we let $P_{e_2} = Q_{\wrtr_2}$ to be the marked point associated to $e_2$. The proof of Lemma \ref{lem:asy_support_combinatorics} shows that we must have $MI(\wrtr_1)=2$ and $MI(\wrtr)=0$ in order to have $Q_\wrtr \neq \emptyset$, and $Q_\wrtr \neq \emptyset$ if and only if $P_{e_2} \in Q_{\wrtr_1}$. By the induction hypothesis we have $\mathfrak{l}_{\wrtr_1}(\incoming,\dots,\incoming)  =(-1)^{\chi(\wrtr_1)} \alpha_{\wrtr_1} \text{Mult}(\wrtr_1) z^{m_{\wrtr_1}} u_{\wrtr_1}$ with $\alpha_1 \in \mathcal{W}^0_{Q_{\wrtr_1}}$. Therefore we have
$$\mathfrak{l}_{\wrtr}(\incoming,\dots,\incoming) = -(-1)^{\chi(\wrtr_1)} \text{Mult}(\wrtr_1) H(\alpha_{\wrtr_1} \wedge \delta_{P_{e_2}}) [z^{m_{\wrtr_1}}, \check{\partial}_1\wedge \check{\partial}_2] u_{\wrtr_1} u_{e_2},$$
where $u_{\wrtr_1} u_{e_2} = u_\wrtr$.

Now Lemma \ref{lem:support_product} implies that $\alpha_1 \wedge \delta_{P_{e_2}} \in \mathcal{W}^2_{P_{e_2}}$. By our choice we have $P_{e_2} \in U^{m_\wrtr}_{1,+}$ and hence applying Lemma \ref{lem:integral_lemma_modified}, we get $\alpha_{\wrtr}=-H(\alpha_1 \wedge \delta_{P_{e_2}}) \in \mathcal{W}^{1}_{Q_\wrtr}$, where $Q_\wrtr = P_{e_2} - \real_{\geq 0 } \cdot \bar{m}_\wrtr$ as in Definition \ref{def:Q_edge}. Furthermore, we have $[z^{m_{\wrtr_1}}, \check{\partial}_1 \wedge \check{\partial}_2] = (e^*_2, \bar{m}_{\wrtr_1}) \check{\partial}_2 - (e^*_1, \bar{m}_{\wrtr_1}) \check{\partial}_1$, where $e^*_1, e^*_2$ is the dual basis to $e_1, e_2$ introduced in Notations \ref{not:holomorphic_coordinates}. As in Notations \ref{not:integer_weight}, we can write $k_{\wrtr} \hat{m}_{\wrtr} = \bar{m}_{\wrtr}$ for some primitive $\hat{m}_{\wrtr} \in M$. Since we have $m_{\wrtr_1} = m_{\wrtr}$, we find that $(e^*_2, \bar{m}_{\wrtr_1}) \check{\partial}_2 - (e^*_1, \bar{m}_{\wrtr_1}) \check{\partial}_1 =k_{\wrtr} n_{Q_{\wrtr}}$. Together with the fact that $\chi(\wrtr)= \chi(\wrtr_1)$ and $\text{Mult}(\wrtr) = \text{Mult}(\wrtr_1)$, we obtain the desired identity in this case.
	
In the second case we have both $\wrtr_1$ and $\wrtr_2$ having $k_1,k_2 \geq 1$, and $MI(\wrtr) = MI(\wrtr_1) + MI(\wrtr_2)$. Assuming $Q_\wrtr \neq \emptyset$, then one of $Q_{\wrtr_1}, Q_{\wrtr_2}$, say $Q_{\wrtr_1}$, is a ray or a line, and they intersect transversally. There are two subcases depending on whether $MI(\wrtr_2) = 0$ or $2$.

We first assume that $MI(\wrtr_2) = 0$. Then we can write
$\mathfrak{l}_{\wrtr_i}(\incoming,\dots,\incoming) =  (-1)^{\chi(\wrtr_i)} \text{Mult}(\wrtr_i) \alpha_{\wrtr_i} z^{m_{\wrtr_i}} \check{\partial}_{n_i} u_{\wrtr_i},$
where we abbreviate $n_i = k_{\wrtr_i} n_{Q_{\wrtr_i}}$ and $n_{Q_{\wrtr_i}}$ is the primitive clockwise oriented normal to $Q_{\wrtr_i}$. Therefore we have
$$\mathfrak{l}_{\wrtr}(\incoming,\dots,\incoming) = -(-1)^{\chi(\wrtr_1)+\chi(\wrtr_2)} \text{Mult}(\wrtr_1) \text{Mult}(\wrtr_2) H(\alpha_{\wrtr_1} \wedge \alpha_{\wrtr_2}) [z^{m_{\wrtr_1}} \check{\partial}_{n_1}, z^{m_{\wrtr_2}} \check{\partial}_{n_2}] u_{\wrtr_1} u_{\wrtr_2}.$$
Using Lemma \ref{lem:support_product} we see that $\alpha_{\wrtr_1} \wedge \alpha_{\wrtr_2} \in \mathcal{W}^2_{Q_{\wrtr_1} \cap Q_{\wrtr_2}}$ in the case that $Q_{\wrtr_1}$ and $Q_{\wrtr_2}$ are intersecting transversally (otherwise the product is $0 \in \mathcal{H}^{0,*}_n$).  Applying Lemma \ref{lem:Q_wrtr_and_homotopy} together with Lemma \ref{lem:integral_lemma_modified}, we get $\alpha_{\wrtr} = -H(\alpha_{\wrtr_1} \wedge \alpha_{\wrtr_2}) \in \mathcal{W}^{1}_{Q_\wrtr}$. Furthermore, we have $[z^{m_{\wrtr_1}} \check{\partial}_{n_1}, z^{m_{\wrtr_2}} \check{\partial}_{n_2}]  = z^{m_{\wrtr}} \big( (\bar{m}_{\wrtr_2},n_{\wrtr_1}) \check{\partial}_{n_2} - (\bar{m}_{\wrtr_1},n_{\wrtr_2}) \check{\partial}_{n_1}  \big)$, and $(\bar{m}_{\wrtr_2},n_{\wrtr_1}) \check{\partial}_{n_2} - (\bar{m}_{\wrtr_1},n_{\wrtr_2}) \check{\partial}_{n_1}  = \det(\bar{m}_{e_1},\bar{m}_{e_2}) (n_{\wrtr_1} + n_{\wrtr_2}).
$
If $\{-\bar{m}_{e_1}, -\bar{m}_{e_2}\}$ is positively oriented, then $\det(\bar{m}_{e_1},\bar{m}_{e_2}) > 0$ and $n_{\wrtr_1} + n_{\wrtr_2} = k_{\wrtr} n_{Q_{\wrtr}}$, where $k_{\wrtr}$ is introduced in Notations \ref{not:integer_weight}, and $\det(\bar{m}_{e_1},\bar{m}_{e_2}) = \text{Mult}_{v_r}(\wrtr)$. Notice that switching to the assumption that $\{-\bar{m}_{e_1}, -\bar{m}_{e_2}\}$ is negatively oriented would result in a minus sign in $\det(\bar{m}_{e_1},\bar{m}_{e_2})$ and hence contribute an extra $(-1)^{\chi(\wrtr,v_r)}$ in the formula (i.e. in this case $\chi(\wrtr,v_r) = 1$). Combining with the fact that $\text{Mult}(\wrtr) = \text{Mult}(\wrtr_1) \text{Mult}_{v_r}(\wrtr)$, $(-1)^{\chi(\wrtr)} = (-1)^{\chi(\wrtr_1)} (-1)^{\chi(\wrtr,v_r)}$, we obtain the desired formula.
	
In the second subcase we assume that $MI(\wrtr_2) = 2$, so by the induction hypothesis we have $\mathfrak{l}_{\wrtr_2}(\incoming,\dots,\incoming) = (-1)^{\chi(\wrtr_2)} \alpha_{\wrtr_2} \text{Mult}(\wrtr_2)z^{m_{\wrtr_2}} u_{\wrtr_2}$. Therefore we have
$$\mathfrak{l}_{\wrtr}(\incoming,\dots,\incoming) = -(-1)^{\chi(\wrtr_1)+\chi(\wrtr_2)} \text{Mult}(\wrtr_1) \text{Mult}(\wrtr_2) H(\alpha_{\wrtr_1} \wedge \alpha_{\wrtr_2}) [z^{m_{\wrtr_1}} \check{\partial}_{n_1}, z^{m_{\wrtr_2}}] u_{\wrtr_1} u_{\wrtr_2},$$
where we absorb the $k_{\wrtr_1}$ into $n_1 = k_{\wrtr_1} n_{Q_{\wrtr_1}}$ again. Applying Lemma \ref{lem:Q_wrtr_and_homotopy}, \ref{lem:support_product} and \ref{lem:integral_lemma_modified} as in the previous subcase, we obtain that $\alpha_{\wrtr} = -H(\alpha_{\wrtr_1} \wedge \alpha_{\wrtr_2}) \in \mathcal{W}^{0}_{Q_\wrtr}$. Furthermore, we have $[z^{m_{\wrtr_1}} \check{\partial}_{n_1}, z^{m_{\wrtr_2}}] = det(\bar{m}_{\wrtr_1}, \bar{m}_{\wrtr_2}) z^{m_{\wrtr}} = (-1)^{\chi(\wrtr,v_r)} \text{Mult}_{v_r}(\wrtr)$ which gives us the desired identity.
\end{proof}

Next we would like to take a closer look at the differential form $\alpha_{\wrtr}$ defined in Notations \ref{not:alpha_operation}.
\begin{definition}[cf. Definition 5.29 in \cite{kwchan-leung-ma}]\label{def:trop_tree_orientation}
We attach a differential form $\nu_e$ on $\real_{\leq 0}^{|\wrtr^{[1]}|}$ to each $e \in \bar{\wrtr}^{[1]}$ recursively by the rules: $\nu_e := 1$ for each incoming edge $e \in \partial_{in}^{-1}(\wrtr^{[0]}_{in})$; $\nu_{e_3} =  (-1)^{|\nu_{e_1}||\nu_{e_2}|} \nu_{e_1} \wedge \nu_{e_2} \wedge ds_{e_{3}}$ (here $|\nu_{e_2}|$ is the cohomological degree of $\nu_{e_2}$) if $v$ is an internal vertex with incoming edges $e_1,e_2 \in \wrtr_0$ and outgoing edge $e_3$ such that $e_1, e_2, e_3$ is clockwise oriented.

We let $\nu_{\wrtr}$ be the differential form attached to the unique outgoing edge $e_o \in \wrtr^{[1]}$, which defines a volume form or orientation on $\real_{\leq 0}^{|\wrtr^{[1]}|}$.
\end{definition}

Given a weighted ribbon $d$-pointed $k$-tree $\wrtr$ with $MI(\wrtr)=0$ with $Q_{\wrtr} \neq \emptyset$, which is either a ray or a line, we let $\eta_\wrtr$ be the unique affine function on $M_\real$ such that $\eta_\wrtr = 0$ on $Q_\wrtr$ and $\eta_\wrtr$ takes positive values on the anti-clockwise oriented normal to $Q_\wrtr$.
\begin{lemma}\label{lem:disk_moduli_orientation_comparsion}
For a weighted ribbon $d$-pointed $k$-tree $\wrtr$ with $MI(\wrtr)=0,2$ and $Q_{\wrtr} \neq \emptyset$ and $\text{Mult}(\wrtr) \neq 0$, there exists some $c>0$ such that
\begin{equation*}
(ev_{\wrtr,i_1})^*(d\eta_1 d\eta_2) \cdots (ev_{\wrtr,i_d})^*(d\eta_1 d\eta_2) = \left\{
\begin{array}{ll}
(-1)^{\chi(\wrtr)} c\nu_\wrtr + \varepsilon  & \text{if $MI(\wrtr) = 2$},\\
(-1)^{\chi(\wrtr)} c\nu_\wrtr \wedge d\eta_{\wrtr} + \varepsilon & \text{if $MI(\wrtr) = 0$},
\end{array}
\right.
\end{equation*}
where $\varepsilon$ satisfies $\iota_{\nu_{\wrtr}^\vee}\varepsilon = 0$ (here $\nu_{\wrtr}^{\vee}$ is a top polyvector field dual to $\nu_{\wrtr}$ over the component $\real_{\leq 0}^{|\wrtr^{[1]}|}$) and $\eta_1, \eta_2$ are the affine coordinates on $M_\real$ with respect to the oriented basis $e_1,e_2$ introduced in Notations \ref{not:holomorphic_coordinates}.
\end{lemma}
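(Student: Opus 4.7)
The plan is to proceed by induction on $|\wrtr^{[0]}|$, paralleling the inductive structure used in the proofs of Lemmas~\ref{lem:asy_support_combinatorics} and \ref{lem:tree_support_lemma}. When $\wrtr^{[0]} = \emptyset$ the tree reduces to a single edge with $d=0$, so both sides equal $1$, $\chi(\wrtr)=0$ and the base case is immediate. For the inductive step, I split $\wrtr$ at its root vertex $v_r$ into subtrees $\wrtr_1,\wrtr_2$ with outgoing edges $e_1, e_2$, joined at $v_r$ via the outgoing edge $e_o$ of $\wrtr$. The key observation is that, under the parametrization $\overline{\mathfrak{M}}_d^\wrtr(\mathcal{P},\Sigma) \cong \real_{\leq 0}^{|\wrtr^{[1]}|} \times M_\real$, the evaluation map at a marked point $p_j$ lying in the subtree $\wrtr_i$ agrees with the corresponding evaluation map on $\wrtr_i$ (viewed with stop at the new outgoing vertex $v_{i,o}$) after the substitution $\tilde{Q} := \varsigma(v_r) = Q - s_{e_o}\bar{m}_{e_o}$. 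Consequently the pullback of $\omega = d\eta_1 \wedge d\eta_2$ is governed by $d\tilde{Q} = dQ - \bar{m}_{e_o}\,ds_{e_o}$, producing an extra $ds_{e_o}$ that matches exactly the recursive definition $\nu_\wrtr = (-1)^{|\nu_{e_1}||\nu_{e_2}|}\nu_{e_1}\wedge\nu_{e_2}\wedge ds_{e_o}$ of Definition~\ref{def:trop_tree_orientation}.

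Applying the induction hypothesis to each subtree and multiplying the two resulting expressions together yields the factorized LHS. I then expand the product using the substitution above and collect the $\nu_\wrtr$-component (for $MI(\wrtr)=2$) or the $\nu_\wrtr\wedge d\eta_\wrtr$-component (for $MI(\wrtr)=0$) by contracting with $\nu_\wrtr^\vee$; terms that miss at least one $ds_e$ factor of $\nu_\wrtr$ get absorbed into the error term $\varepsilon$. The case analysis mirrors that of Lemma~\ref{lem:tree_support_lemma}: Case 1 when $e_2$ is a marked-point edge (forcing $MI(\wrtr_1)=2$ and $MI(\wrtr)=0$) and Case 2 when both subtrees are genuine, with $(MI(\wrtr_1), MI(\wrtr_2)) \in \{(2,0),(0,2),(0,0)\}$. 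In each case the balancing condition $\bar{m}_{e_o} = \bar{m}_{e_1}+\bar{m}_{e_2}$ together with the hypothesis $\text{Mult}(\wrtr)\neq 0$ (equivalently $\det(\bar{m}_{e_1},\bar{m}_{e_2})\neq 0$) identifies the surviving leading term; for $MI(\wrtr)=0$, the fact that $Q_\wrtr$ has direction $-\bar{m}_\wrtr = -(\bar{m}_{e_1}+\bar{m}_{e_2})$ pins down the normal 1-form $d\eta_\wrtr$ up to positive scalar.

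The main technical obstacle will be the careful bookkeeping of signs. One has to verify that the combination of the Koszul signs arising from permuting $ds_{e_o}$ past the $d\tilde{Q}$-terms, the sign of $\det(\bar{m}_{e_1},\bar{m}_{e_2})$ versus its absolute value $\text{Mult}_{v_r}(\wrtr)$, and the Koszul prefactor $(-1)^{|\nu_{e_1}||\nu_{e_2}|}$ in the definition of $\nu_\wrtr$, reproduces exactly the factor $(-1)^{\chi(\wrtr,v_r)}$ from Definition~\ref{def:operator_partity} (which by construction records precisely whether $\{-\bar{m}_{e_1},-\bar{m}_{e_2}\}$ is positively or negatively oriented in $M_\real$). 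Combined with the inductive sign $(-1)^{\chi(\wrtr_1)+\chi(\wrtr_2)}$, this will yield the total $(-1)^{\chi(\wrtr)}$, and the positivity of the new constant $c = c_1 c_2\,|\det(\bar{m}_{e_1},\bar{m}_{e_2})| > 0$ then follows automatically once the sign has been extracted.
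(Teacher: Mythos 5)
Your proposal follows essentially the same strategy as the paper's proof: induction on $|\wrtr^{[0]}|$, splitting at the root vertex into $\wrtr_1, \wrtr_2$, with the same two cases (marked-point edge versus two genuine subtrees) and the same identification of the $ds_{e_o}$-factor via the flow $\tau_{e_o}(s_{e_o},Q) = Q - s_{e_o}\bar{m}_{e_o}$; the sign analysis you flag as the main difficulty is exactly where the paper spends its effort, matching the Koszul sign in $\nu_\wrtr$ and the orientation of $\{-\bar{m}_{e_1},-\bar{m}_{e_2}\}$ to the factor $(-1)^{\chi(\wrtr,v_r)}$. The one mild structural difference is that the paper first argues separately, using the fact that $\overline{\mathfrak{M}}_d^\wrtr(\mathcal{P},\Sigma,P_1,\dots,P_n)$ is a one-dimensional affine subspace along which the marked-point evaluations are constant, that the $M_\real$-component of the pullback in the $MI=0$ case must already be proportional to $d\eta_\wrtr$; you instead plan to extract the $d\eta_\wrtr$-factor inductively from the direction of $Q_\wrtr$, which amounts to the same thing.
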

\begin{proof}
First of all, notice that both $\overline{\mathfrak{M}}_d^\wrtr(\mathcal{P},\Sigma)$ and $M_\real^{d}$ are affine manifolds and $\vec{ev}$ is affine linear. So all the differential forms appearing in this lemmma are affine differential forms. Therefore it suffices to check the equality at a point in $\overline{\mathfrak{M}}_d^\wrtr(\mathcal{P},\Sigma)$.
Also since $\overline{\mathfrak{M}}_d^\wrtr(\mathcal{P},\Sigma) \cong \real_{\leq 0}^{|\wrtr^{[1]}|} \times M_\real$, we can always write
\begin{equation*}
(ev_{\wrtr,i_1})^*(d\eta_1 d\eta_2) \cdots (ev_{\wrtr,i_d})^*(d\eta_1 d\eta_2) = \left\{
\begin{array}{ll}
c'\nu_\wrtr + \varepsilon  & \text{if $MI(\wrtr) = 2$},\\
c'\nu_\wrtr \wedge \alpha+ \varepsilon & \text{if $MI(\wrtr) = 0$}
\end{array}
\right.
\end{equation*}
for some $c' \in \real$, and some $1$-form $\alpha \in \Omega^1(M_\real)$ with $\iota_{\nu_{\wrtr}^\vee}\varepsilon = 0$. We need to show that $\alpha$ is a constant multiple of $d\eta_{\wrtr}$ and the constant $c' = (-1)^{\chi(\wrtr)} c$ for some $c > 0$.

In the case $MI(\wrtr) = 0$ with $Q_\wrtr \neq \emptyset$, the moduli space $\overline{\mathfrak{M}}_d^\wrtr(\mathcal{P},\Sigma,P_1,\dots,P_n)$ is a 1-dimensional affine subspace of $\overline{\mathfrak{M}}_d^\wrtr(\mathcal{P},\Sigma)$. We take any path $\varsigma$ lying inside $\overline{\mathfrak{M}}_d^\wrtr(\mathcal{P},\Sigma,P_1,\dots,P_n) \subset \overline{\mathfrak{M}}_d^\wrtr(\mathcal{P},\Sigma)$. Since $ev_{\wrtr,i_j}\circ \varsigma$ is a constant map for any $j = 1, \dots, d$, we have
$\iota_{\varsigma'} ((ev_{\wrtr,i_1})^*(d\eta_1 d\eta_2) \cdots (ev_{\wrtr,i_d})^*(d\eta_1 d\eta_2)) = 0,$
where $\varsigma'$ is the affine vector field on $\overline{\mathfrak{M}}_d^\wrtr(\mathcal{P},\Sigma)$ induced by $\varsigma$.
On the other hand, $(ev_o)_*(\varsigma')$ is tangent to $Q_\wrtr = ev_o \left( \overline{\mathfrak{M}}_d^\wrtr(\mathcal{P},\Sigma,P_1,\dots,P_n) \right)$. So $\alpha$ must be a constant multiple of $d\eta_{\wrtr}$ and we can write
$(ev_{\wrtr,i_1})^*(d\eta_1 d\eta_2) \cdots (ev_{\wrtr,i_d})^*(d\eta_1 d\eta_2) =  c'\nu_{\wrtr} \wedge d\eta_\wrtr + \varepsilon,$
for some constant $c'$.
	
We now prove that $c'$ is of the form $(-1)^{\chi(\wrtr)} c$ for some $c > 0$ by induction on the number of vertices in $\wrtr^{[0]}$. The initial case is when $\wrtr^{[0]} = \emptyset$ and the only possible trees are those with a unique edge $e$. Since there are no evaluation maps, we adopt the convention that the left hand side of the equality in the lemma is equal to 1 to make the statement true in this case.
	
For the induction step, suppose we have a tree $\wrtr$ with $\wrtr^{[0]} \neq \emptyset$ with the unique root vertex $v_r \in \wrtr^{[0]}$ connecting to the outgoing edge $e_o$ with two incoming edges $e_1$ and $e_2$. We split $\wrtr$ at $v_r$ to obtain two trees $\wrtr_1, \wrtr_2$ with outgoing edges $e_1, e_2$ and $k_1, k_2$ incoming edges, $d_1$ and $d_2$ marked points respectively. There are two possible cases.

This first case is when one of the incoming edges, say $e_2$, is an edge corresponding to a marked point so that $k_2 = 0$ and $d_2 =1$. As in the proof of Lemma \ref{lem:asy_support_combinatorics}, we must have $MI(\wrtr_1)=2$ and $MI(\wrtr) = 0$. We use the identification
$\overline{\mathfrak{M}}_{d_1}^{\wrtr_1} (\mathcal{P},\Sigma) _{\ ev_{\wrtr_1,o}}\times_{\tau_{e_o}} (\real_{\leq 0}\times M_\real) = \overline{\mathfrak{M}}_d^{\wrtr} (\mathcal{P},\Sigma),$
under which the evaluation map $ev_{\wrtr,o} : \overline{\mathfrak{M}}_d^{\wrtr} (\mathcal{P},\Sigma) \rightarrow M_\real$ is identified as the projection to the last coordinate of the product on the left hand side, and the evaluation at the marked point $e_2$ is identified as the projection $\tau_{e_o}$ to the second factor of $\real_{\leq 0}\times M_\real$. We have
$$(ev_{\wrtr_1,i_1})^*(d\eta_1 d\eta_2) \cdots (ev_{\wrtr_1,i_{d_1}})^*(d\eta_1 d\eta_2) = (-1)^{\chi(\wrtr_1)}c \nu_{\wrtr_1} + \varepsilon_{\wrtr_1}$$
for some $c > 0$ by the induction hypothesis. Since $MI(\wrtr) = 0$, $Q_\wrtr$ is a ray or a line. We take an affine path $\varrho$ in $M_\real$ transversal to $Q_\wrtr$ parametrized by the affine coordinate $\eta_{\wrtr}$. Then restricting to $\real_{\leq 0} \times \varrho$, we have
$ev_{\wrtr,i_d}^*(d\eta_1d\eta_2) = \tau_{e_o}^*(d\eta_1 d\eta_2) = ds_{e_o} \wedge d\eta_{\wrtr},$
where $s_{e_o}$ is the coordinate on $\real_{\leq 0}$ associated to the outgoing edge $e_o$. Putting these together we have
$(-1)^{\chi(\wrtr_1)}c \nu_{\wrtr_1} \wedge ds_{e_o} \wedge d\eta_\wrtr = (-1)^{\chi(\wrtr)} c \nu_\wrtr \wedge d\eta_\wrtr.$
	
In the second case we have both $\wrtr_1$ and $\wrtr_2$ having $k_1,k_2 \geq 1$, and we have $MI(\wrtr) = MI(\wrtr_1) + MI(\wrtr_2)$. Assuming $Q_\wrtr \neq \emptyset$, then one of $Q_{\wrtr_1}, Q_{\wrtr_2}$, say $Q_{\wrtr_1}$, must be a ray or a line. There are two subcases depending on whether $MI(\wrtr_2) = 0$ or $MI(\wrtr_2) = 2$.

We first assume that $MI(\wrtr_2) = 0$. In this case we have $MI(\wrtr) = 0$, and both $|\wrtr_1^{[1]}|, |\wrtr_2^{[1]}|$ are odd, and hence so is $|\wrtr_1^{[1]}||\wrtr_2^{[1]}|$. Similar to the previous case, we use the identification
$\overline{\mathfrak{M}}_{d_1}^{\wrtr_1} (\mathcal{P},\Sigma) _{\ ev_{\wrtr_1,o}}\times_{ev_{\wrtr_2,o}} \overline{\mathfrak{M}}_{d_2}^{\wrtr_2} (\mathcal{P},\Sigma) _{\ ev_{\wrtr_2,o}}\times_{\tau_{e_o}} (\real_{\leq 0}\times M_\real) = \overline{\mathfrak{M}}_{d}^{\wrtr} (\mathcal{P},\Sigma).$
From the induction hypothesis we have the relation
$(ev_{\wrtr_a,i_1})^*(d\eta_1 d\eta_2) \cdots (ev_{\wrtr_a,i_{d_1}})^*(d\eta_1 d\eta_2) = (-1)^{\chi(\wrtr_a)}c_a \nu_{\wrtr_a} \tau_{e_o}^*(d\eta_{\wrtr_a}) + \varepsilon_{\wrtr_a}$
with $c_a > 0$ and $\iota_{\nu_{\wrtr_a}^\vee}\varepsilon_{\wrtr_a} = 0$ for $a = 1, 2$. Taking their product we get
$$(ev_{\wrtr,i_1})^*(d\eta_1 d\eta_2) \cdots (ev_{\wrtr,i_d})^*(d\eta_1 d\eta_2) = (-1)^{\chi(\wrtr_1)+\chi(\wrtr_2)+|\wrtr_1^{[1]}||\wrtr_2^{[1]}|} \nu_{\wrtr_1} \wedge \nu_{\wrtr_2} \tau_{e_o}^*(d\eta_{\wrtr_1}\wedge d\eta_{\wrtr_2}) + \varepsilon,$$
where $c := c_1c_2 > 0$ and $\iota_{\nu_{\wrtr}^{\vee}} \varepsilon = 0$. Furthermore, we have
$\tau_{e_o}^*(d\eta_{\wrtr_1}\wedge d\eta_{\wrtr_2}) = (-1)^{\chi(\wrtr,v)} ds_{e_o}\wedge d\eta_{\wrtr},$
where $s_{e_o}$ is the coordinate on $\real_{\leq 0}$ associated to the outgoing edge $e_o$. Putting these together we obtain the desired identity.
	
Now assuming $MI(\wrtr_2) = 2$, we have
$(ev_{\wrtr_2,i_1})^*(d\eta_1 d\eta_2) \cdots (ev_{\wrtr_2,i_{d_1}})^*(d\eta_1 d\eta_2) = (-1)^{\chi(\wrtr_2)}c_2 \nu_{\wrtr_2}  + \varepsilon_{\wrtr_2}$
instead. In this case, $|\wrtr_1^{[1]}||\wrtr_2^{[1]}|$ is even and $\nu_{\wrtr_2}$ is an even degree differential form. Therefore we obtain
\begin{align*}
&(ev_{\wrtr,i_1})^*(d\eta_1 d\eta_2) \cdots (ev_{\wrtr,i_d})^*(d\eta_1 d\eta_2)  = (-1)^{\chi(\wrtr_1)+\chi(\wrtr_2)} c \nu_{\wrtr_1} \wedge \nu_{\wrtr_2} \tau_{e_o}^*(d\eta_{\wrtr_1}) + \varepsilon\\
 =& (-1)^{\chi(\wrtr)} c \nu_{\wrtr_1} \wedge \nu_{\wrtr_2} \wedge ds_{e_o}+ \varepsilon'
 = (-1)^{\chi(\wrtr)} c \nu_{\wrtr} + \varepsilon'
\end{align*}
using the fact that $\tau_{e_o}^*(d\eta_{\wrtr_1}) = (-1)^{\chi(\wrtr,v)}ds_{e_o} + \beta$ for some $1$-form $\beta$ on $M_\real$. Notice that switching the roles of $\wrtr_1$ and $\wrtr_2$ would yield the same result. This completes the proof of the lemma.
\end{proof}

\begin{lemma}\label{lem:iterated_integral}
For $MI(\wrtr) = 0,2$ with $Q_{\wrtr} \neq \emptyset$ and $\text{Mult}(\wrtr) \neq 0$, we have the identity
$$\alpha_{\wrtr} = (-1)^{k+d-1} (ev_o)_* \left( (ev_{i_1})^*(\delta_{P_{k_1}}) \cdots (ev_{i_d})^*(\delta_{P_{k_d}}) \right),$$
where $ev_{*} : \overline{\mathfrak{M}}_d^{\wrtr} (\mathcal{P},\Sigma) \rightarrow M_\real$'s are the evaluation maps introduced in Definition \ref{def:evaluation_map}, and the orientation on fibers of $ev_o$ is defined similarly as in Definition \ref{def:trop_tree_orientation} (notice that $(-1)^{k+d-1} = (-1)^{MI(\wrtr)/2-1}$).
\end{lemma}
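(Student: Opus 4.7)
The natural approach is induction on the number of internal vertices $|\wrtr^{[0]}|$. The base case $|\wrtr^{[0]}| = 0$ is a tree consisting of a single edge, for which both sides of the claimed identity are equal to $1$ (or to $\delta_{P_i}$, in the case of a single marked-point edge) by our conventions on empty evaluations and pushforwards.

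For the inductive step, I would split $\wrtr$ at its root vertex $v_r$ into two subtrees $\wrtr_1, \wrtr_2$ with outgoing edges $e_1, e_2$ meeting $v_r$, whose outgoing edge is $e_o$. The recursive formula from Notation \ref{not:alpha_operation} reads
\begin{equation*}
\alpha_\wrtr = -H\bigl(\alpha_{\wrtr_1} \wedge \alpha_{\wrtr_2}\bigr),
\end{equation*}
and the induction hypothesis rewrites each $\alpha_{\wrtr_j}$ as a signed pushforward of the $\delta$-forms at its marked points along $ev_{\wrtr_j,o}$. Using the product decomposition
\begin{equation*}
\overline{\mathfrak{M}}_d^\wrtr(\mathcal{P},\Sigma) \cong \bigl(\overline{\mathfrak{M}}_{d_1}^{\wrtr_1}(\mathcal{P},\Sigma) \, {}_{ev_o}\!\times_{ev_o} \overline{\mathfrak{M}}_{d_2}^{\wrtr_2}(\mathcal{P},\Sigma)\bigr) \times \real_{\leq 0}
\end{equation*}
from the proof of Lemma \ref{lem:asy_support_combinatorics} (where the $\real_{\leq 0}$-factor parametrizes the length $s_{e_o}$ of the new internal edge at $v_r$), the two subtree pushforwards combine into a pushforward along $ev_{\wrtr,o}$ over the fiber product, and it remains to account for the final application of $-H$.

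The key identification is that this last $-H$ is exactly fiber integration along the additional $\real_{\leq 0}$-factor $s_{e_o}$: by Notation \ref{not:perpendicular_hyperplane}, the operator $H_{m_\wrtr}$ integrates along the $-\bar{m}_\wrtr$-direction, which by Definition \ref{def:tropical_curve}(3) is precisely the direction in which the parametrization $\varsigma|_{[s_{e_o},0]}$ traces the image of $e_o$. The orientation conventions on $\nu_\wrtr$ in Definition \ref{def:trop_tree_orientation}, together with the sign computation in Lemma \ref{lem:disk_moduli_orientation_comparsion}, are designed precisely so that this fiber integration produces the required orientation on $\overline{\mathfrak{M}}_d^\wrtr(\mathcal{P},\Sigma)$ relative to the $ev_o$-projection. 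Since $|\wrtr^{[1]}| = k+d-1$ and each internal edge contributes one factor of $-1$ from the $-H$ in the recursion, combining the induction hypothesis signs $(-1)^{k_1+d_1-1}(-1)^{k_2+d_2-1}$ with the fresh $-1$ yields exactly $(-1)^{k+d-1}$.

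The main obstacle is the careful bookkeeping of orientations, since the ribbon structure dictates the order in which wedge products and $-H$ operations are composed in $\alpha_\wrtr$, and this order must be matched with the inductive ordering of internal edges encoded in $\nu_\wrtr$. One must also treat separately the subcase where one of the incoming edges at $v_r$ (say $e_2$) is itself a marked-point edge, so that $\wrtr_2$ carries no trivalent structure; there the induction is applied only to $\wrtr_1$, and Lemma \ref{lem:disk_moduli_orientation_comparsion} (in the $MI(\wrtr)=0$ branch) is used directly to identify $-H(\alpha_{\wrtr_1} \wedge \delta_{P_{e_2}})$ with $(ev_o)_*$ of the product of all marked-point $\delta$-forms, again with the correct sign.
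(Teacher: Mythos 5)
Your proposal matches the paper's proof step for step: induction on the number of internal vertices, splitting at the root, the same two cases (marked-point edge vs.\ two nontrivial subtrees), the same fiber-product decomposition of the moduli space, and the same sign count $(-1)^{k_1+d_1-1}(-1)^{k_2+d_2-1}\cdot(-1)=(-1)^{k+d-1}$. The one place where your sketch is looser than what the argument actually requires is the assertion that the final $-H$ ``is exactly fiber integration along the $\real_{\leq 0}$-factor.'' The homotopy operator $H_{m_\wrtr}$ is the two-term composite of \eqref{eqn:generalized_integral_operator}, not just integration along $-\bar{m}_\wrtr$; reducing it to a single $-\int_{-\infty}^{0}\tau_{e_o}^*(\cdot)$ modulo $\mathcal{W}^{-1}_*$ needs Lemma \ref{lem:Q_wrtr_and_homotopy}, which guarantees that $\text{Supp}(\alpha_{\wrtr_1}\wedge\alpha_{\wrtr_2})$ (resp.\ $\alpha_{\wrtr_1}\wedge\delta_{P_{e_2}}$) lies in the half-space $U^{m_\wrtr}_{1,+}$, so that the restriction-to-hyperplane summand of $H_{m_\wrtr}$ misses the support and contributes only negligible terms. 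The paper invokes exactly this lemma at that step; with that citation added, your proposal is the paper's proof.
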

\begin{proof}
We prove by using induction on the number of vertices in $\wrtr^{[0]}$. The initial case is when $\wrtr^{[0]} = \emptyset$ and the only possible trees the ones with a unique edge $e$, for which the statement is trivial.

For the induction step, suppose we have a tree $\wrtr$ with $\wrtr^{[0]} \neq \emptyset$ with the unique root vertex $v_r \in \wrtr^{[0]}$ connecting to the outgoing edge $e_o$ with two incoming edges $e_1$ and $e_2$. We split $\wrtr$ at $v_r$ to obtain two trees $\wrtr_1, \wrtr_2$ with outgoing edges $e_1, e_2$ and $k_1, k_2$ incoming edges, $d_1$ and $d_2$ marked points respectively. There are two possible cases.

This first case is when one of the incoming edges, say $e_2$, is an edge corresponding to a marked point so that $k_2 = 0$ and $d_2 =1$. As in the proof of Lemma \ref{lem:asy_support_combinatorics}, we must have $MI(\wrtr_1)=2$ and $MI(\wrtr) = 0$. In this case we let $P_{e_2} = Q_{\wrtr_2}$ be the marked point associated to $e_2$. The induction hypothesis says that
$\alpha_{\wrtr_1} = (-1)^{k+d-2} (ev_{\wrtr_1,o})_* \left( (ev_{\wrtr_1,i_1})^*(\delta_{P_{k_1}}) \cdots (ev_{\wrtr_1,i_{d_1}})^*(\delta_{P_{k_{d_1}}}) \right),$
which is a function with asymptotic support on $Q_{\wrtr_1}$. Then we have
$\alpha_{\wrtr} = -H(\alpha_{\wrtr_1} \wedge \delta_{P_{e_2}}) = -\int_{-\infty}^0 \tau_{e_o}^*(\alpha_{\wrtr_1} \wedge \delta_{P_{e_2}})$
in $\mathcal{W}^0_{*}/\mathcal{W}^{-1}_*$, where $\tau_{e_o} : \real \times M_{\real} \rightarrow M_\real$ is the flow associated to $-\bar{m}_{\wrtr}$. This equality holds because we have $P_{e_2} \in U^{m_{\wrtr}}_{1,+}$ and hence any integral over a domain not intersecting $P_{e_2}$ gives $0$ in $\mathcal{W}^0_{*}/\mathcal{W}^{-1}_*$.
		
Writing
$\overline{\mathfrak{M}}_{d_1}^{\wrtr_1} (\mathcal{P},\Sigma) _{\ ev_{\wrtr_1,o}}\times_{\tau_{e_o}} (\real_{\leq 0}\times M_\real) = \overline{\mathfrak{M}}_d^{\wrtr} (\mathcal{P},\Sigma),$
where the evaluation map $ev_{\wrtr,o} : \overline{\mathfrak{M}}_d^{\wrtr} (\mathcal{P},\Sigma) \rightarrow M_\real$ is identified as the projection to the last factor in the product on the left hand side, and the evaluation at the marked point $e_2$ is identified as $\tau_{e_o}$ on $\real_{\leq 0}\times M_\real$. Then we have
\begin{align*}
  & -\int_{-\infty}^0 \tau_{e_o}^*(\alpha_{\wrtr_1} \wedge \delta_{P_{e_2}})
=  (-1)^{k+d-1} \int_{-\infty}^0 \tau_{e_o}^*\left( (ev_{\wrtr_1,o})_*( (ev_{\wrtr_1,i_1})^*(\delta_{P_{k_1}}) \cdots (ev_{\wrtr_1,i_{d_1}})^*(\delta_{P_{k_{d_1}}})) \wedge \delta_{P_{e_2}} \right) \\
= & (-1)^{k+d-1} \int_{-\infty}^0  \left(\int_{\real_{\leq 0}^{|\wrtr_1|}}   (ev_{\wrtr,i_1})^*(\delta_{P_{k_1}}) \cdots (ev_{\wrtr,i_{d-1}})^*(\delta_{P_{k_{d-1}}})\right) \wedge ev_{i_{d}}^*(\delta_{P_{e_2}}) \\
= & (-1)^{k+d-1} (ev_{\wrtr,o})_* \left( (ev_{\wrtr,i_1})^*(\delta_{P_{k_1}}) \cdots (ev_{\wrtr,i_d})^*(\delta_{P_{e_2}}) \right).
\end{align*}

In the second case we have both $\wrtr_1$ and $\wrtr_2$ having $k_1,k_2 \geq 1$ and $MI(\wrtr) = MI(\wrtr_1) + MI(\wrtr_2)$. Making use of Lemma \ref{lem:Q_wrtr_and_homotopy} again, we notice that by comparing the domain of integration intersecting $Q_{\wrtr_1} \cap Q_{\wrtr_2}$ we have
$\alpha_{\wrtr} = - H(\alpha_{\wrtr_1} \wedge \alpha_{\wrtr_2}) = -\int_{-\infty}^0 \tau_{e_o}^* (\alpha_{\wrtr_1} \wedge \alpha_{\wrtr_2}),$
where $\tau_{e_o}$ is the flow of $-\bar{m}_{\wrtr}$.
		
Notice that we have
$\overline{\mathfrak{M}}_{d_1}^{\wrtr_1} (\mathcal{P},\Sigma) _{\ ev_{\wrtr_1,o}}\times_{ev_{\wrtr_2,o}} \overline{\mathfrak{M}}_{d_2}^{\wrtr_2} (\mathcal{P},\Sigma) _{\ ev_{\wrtr_2,o}}\times_{\tau_{e_o}} (\real_{\leq 0}\times M_\real) = \overline{\mathfrak{M}}_d^{\wrtr} (\mathcal{P},\Sigma),$
and therefore we obtain
\begin{align*}
  & -\int_{-\infty}^0 \tau_{e_o}^* (\alpha_{\wrtr_1} \wedge \alpha_{\wrtr_2})
=  (-1)^{k+d-1}\int_{-\infty}^0 \tau_{e_o}^* \big( (ev_{\wrtr_1,o})_*( (ev_{\wrtr_1,i_1})^*(\delta_{P_{k_1}}) \cdots (ev_{\wrtr_1,i_{d_1}})^*(\delta_{P_{k_{d_1}}})) \\
  &\qquad \qquad \qquad \qquad \wedge (ev_{\wrtr_2,o})_*( (ev_{\wrtr_2,i_1})^*(\delta_{P_{k_1}}) \cdots (ev_{\wrtr_1,i_{d_2}})^*(\delta_{P_{k_{d_2}}}))\big) \\
= &  (-1)^{k+d-1} \int_{-\infty}^0 (-1)^{|\wrtr_1^{[1]}||\wrtr_2^{[1]}|} \int_{\real_{\leq 0}^{|\wrtr_1|+|\wrtr_2|}} \left( (ev_{\wrtr,i_1})^*(\delta_{P_{k_1}}) \cdots (ev_{\wrtr,i_d})^*(\delta_{P_{k_d}}) \right)  \\
= & (-1)^{k+d-1} (ev_{\wrtr,o})_* \left( (ev_{\wrtr,i_1})^*(\delta_{P_{k_1}}) \cdots (ev_{\wrtr,i_d})^*(\delta_{P_{k_d}}) \right).
\end{align*}
\end{proof}

Lemma \ref{lem:iterated_integral} allows us to compute the contribution of $\alpha_{\wrtr}$ explicitly as follows:
\begin{lemma}\label{lem:explicit_contribution}
For $MI(\wrtr) = 2$ with $Q_{\wrtr} \neq \emptyset$ and $\text{Mult}(\wrtr) \neq 0$, and for any point $x$ in the interior $\text{Int}(Q_{\wrtr})$, we have
$\lim_{\hp \rightarrow 0} \alpha_{\wrtr}|_{x} = (-1)^{\chi(\wrtr)}.$
For $MI(\wrtr) = 0$ with $Q_{\wrtr} \neq \emptyset$and $\text{Mult}(\wrtr) \neq 0$, and for an arbitrary embedded path $\varrho : (a,b) \rightarrow M_\real$ intersecting the relative interior $\text{Int}_{\text{re}}(Q_\wrtr)$ transversally and positively (here positive means the orientation of $\{-\bar{m}_\wrtr, \varrho' \}$ agrees with that of $M_\real$), we have
$\lim_{\hp \rightarrow 0} \int_{\varrho} \alpha_{\wrtr} = (-1)^{\chi(\wrtr)+1}.$
\end{lemma}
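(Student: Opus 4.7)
The plan is to apply Lemma \ref{lem:iterated_integral}, which rewrites
$$\alpha_\wrtr = (-1)^{k+d-1}(ev_o)_*\Big(\prod_{j=1}^d ev_{i_j}^*(\delta_{P_{k_j}})\Big),$$
as a fiber integral of smoothed delta-forms. As $\hp \to 0$ each 2-form $\delta_{P_{k_j}}$ concentrates to a delta mass of total weight $1$ at $P_{k_j}$, so the leading-order contribution to $\alpha_\wrtr$ reduces to a signed count of points in the preimage of $(P_{k_1},\ldots,P_{k_d})$ under an affine-linear evaluation map, with the signs read off from Lemma \ref{lem:disk_moduli_orientation_comparsion}.

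For $MI(\wrtr)=2$ the prefactor is $(-1)^{k+d-1}=+1$, and evaluating at $x\in\text{Int}(Q_\wrtr)$ gives $\alpha_\wrtr|_x = \int_{F_x}\prod_j ev_{i_j}^*(\delta_{P_{k_j}})$, where $F_x := ev_o^{-1}(x)$ is the $2d$-dimensional affine-linear fiber. The restricted evaluation $\phi := (ev_1,\ldots,ev_d)|_{F_x}: F_x \to M_\real^d$ is affine linear between spaces of equal dimension, and Definition \ref{def:generic_position} together with Lemma \ref{lem:asy_support_combinatorics} ensures that $\phi^{-1}(P_{k_1},\ldots,P_{k_d})$ is a single transversal point. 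Therefore the limit equals $\text{sgn}(\det D\phi)$. By Lemma \ref{lem:disk_moduli_orientation_comparsion} we have $\prod_j ev_{i_j}^*(d\eta_1\wedge d\eta_2) = (-1)^{\chi(\wrtr)} c\,\nu_\wrtr + \varepsilon$ with $c>0$; the restriction $\varepsilon|_{F_x}$ vanishes because $\iota_{\nu_\wrtr^\vee}\varepsilon = 0$ forces every term of $\varepsilon$ to carry at least one $d\eta_i$ factor, which dies once $\eta_1,\eta_2$ are frozen at $x$. Hence $\text{sgn}(\det D\phi) = (-1)^{\chi(\wrtr)}$.

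For $MI(\wrtr)=0$ the prefactor is $(-1)^{k+d-1}=-1$, $\alpha_\wrtr$ is a 1-form, and the projection formula $\int_\varrho (ev_o)_*\omega = \int_{ev_o^{-1}(\varrho)}\omega$ (orienting $ev_o^{-1}(\varrho)$ as $\nu_\wrtr\wedge dt$) reduces the integral to a signed count. The restricted map $\phi := (ev_1,\ldots,ev_d)|_{ev_o^{-1}(\varrho)}$ has a unique transverse preimage of $(P_{k_1},\ldots,P_{k_d})$ by Lemma \ref{lem:asy_support_combinatorics} combined with the transversality and genericity of $\varrho$. Lemma \ref{lem:disk_moduli_orientation_comparsion} now reads $\prod_j ev_{i_j}^*(d\eta_1\wedge d\eta_2) = (-1)^{\chi(\wrtr)} c\,\nu_\wrtr\wedge d\eta_\wrtr + \varepsilon$, and $\varepsilon|_{ev_o^{-1}(\varrho)} = 0$ because the condition $\iota_{\nu_\wrtr^\vee}\varepsilon=0$ combined with $\dim M_\real = 2$ forces each surviving term of $\varepsilon$ to contain $d\eta_1\wedge d\eta_2$, which vanishes on a 1-dimensional slice of $M_\real$. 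The positive-transverse hypothesis on $\varrho$ guarantees $d\eta_\wrtr/dt > 0$, so $\text{sgn}(\det D\phi) = (-1)^{\chi(\wrtr)}$, and hence $\lim \int_\varrho \alpha_\wrtr = -(-1)^{\chi(\wrtr)} = (-1)^{\chi(\wrtr)+1}$.

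The main obstacle will be the sign bookkeeping: matching the prefactor $(-1)^{k+d-1}$ with $MI(\wrtr)$, pinning down the fiber-versus-base ordering in the projection formula so that no unexpected sign creeps in, and in particular justifying that the error term $\varepsilon$ vanishes after restriction to $F_x$ or $ev_o^{-1}(\varrho)$. The latter uses the 2-dimensionality of $M_\real$ in an essential way, through a degree count on $d\eta_i$-factors in the decomposition of $\varepsilon$ dictated by the annihilation condition $\iota_{\nu_\wrtr^\vee}\varepsilon=0$.
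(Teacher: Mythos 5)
Your proof is correct and follows the same route as the paper: rewrite $\alpha_\wrtr$ as a fiber integral via Lemma~\ref{lem:iterated_integral}, extract the sign $(-1)^{\chi(\wrtr)}$ from Lemma~\ref{lem:disk_moduli_orientation_comparsion} (using that $\varepsilon$ restricts to zero on the fiber, which you rightly justify by a degree count on $ds_e$- and $d\eta_i$-factors), and compute the concentrating Gaussian integral using generic position. The only cosmetic difference is that the paper changes variables and integrates over the image $C_x$ (resp.\ $C_\varrho$) in $M_\real^d$ while you stay on the fiber $F_x$ (resp.\ $ev_o^{-1}(\varrho)$) and read off $\operatorname{sgn}(\det D\hat{ev}_x)$; also note that the uniqueness and transversality of the preimage point comes from the nonvanishing Jacobian supplied by Lemma~\ref{lem:disk_moduli_orientation_comparsion} together with $\text{Mult}(\wrtr)\neq 0$, rather than from Lemma~\ref{lem:asy_support_combinatorics} as you cite.
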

\begin{proof}
We begin with $MI(\wrtr) = 2$. In this case, $k+d-1$ is even so we have the identity $\alpha_{\wrtr} =  (ev_o)_* \left( (ev_{i_1})^*(\delta_{P_{k_1}}) \cdots (ev_{i_d})^*(\delta_{P_{k_d}}) \right)$. Fixing a point $x \in \text{Int}(Q_\wrtr)$, we consider the evaluation map $\hat{ev}_x : \overline{\mathfrak{M}}_d^{\wrtr} (\mathcal{P},\Sigma) \cap ev_o^{-1}(x) \rightarrow M_\real^d$ which pulls back the volume form $\prod^d d\eta_1 \wedge d\eta_2$ to $(-1)^{\chi(\wrtr)} c\nu_\wrtr$, and in particular $\hat{ev}_x$ is a diffeomorphism onto its image (notice that $\hat{ev}_x$ is affine linear). We let $C_x := \text{Im}(\hat{ev}_x) \subset M_\real^d$. Then we have
$$(ev_o)_* \big( (ev_{i_1})^*(\delta_{P_{k_1}}) \cdots (ev_{i_d})^*(\delta_{P_{k_d}}) \big)|_{x} = (-1)^{\chi(\wrtr)}\int_{C_x} \delta_{P_{k_1}} \wedge \cdots \wedge \delta_{P_{k_d}}.$$
Using the fact that $x \in \text{Int}(Q_\wrtr)$ and the assumption that $P_1, \dots, P_n$ are in generic position (Definition \ref{def:generic_position}), we see that $(P_{k_1},\dots,P_{k_d}) \in \text{Int}(C_x)$. Together with the explicit form of $\delta_{P_i}$'s in \eqref{eqn:delta_P_i}, we have $\lim_{\hp \rightarrow 0}\int_{C_x} \delta_{P_{k_1}} \wedge \cdots \wedge \delta_{P_{k_d}} = 1$.

For $MI(\wrtr) = 0$, $k+d-1$ is odd. We consider $\mathcal{I}_\varrho : = \bigcup_{t \in (a,b)} \mathcal{I}_{\varrho(t)}$, where
we write $\mathcal{I}_x := \real_{\leq 0}^{|\wrtr^{[1]}|} \times \{x\} \cong \real^{|\wrtr^{[1]}|}_{\leq 0}$ and treat $\nu_{\wrtr}$ as a volume element on each $\mathcal{I}_x$.
Similar to the previous case we consider $\hat{ev}_{\varrho} : \mathcal{I}_\varrho \rightarrow M_{\real}^{d}$ which gives $\hat{ev}_{\varrho}^*(\prod^d d\eta_1 d\eta_2) = (-1)^{\chi(\wrtr)}  c \nu_\wrtr \wedge d\eta_{\wrtr}$. Therefore we have
\begin{align*}
\int_{\varrho} \alpha_{\wrtr} = (-1)\int_{\mathcal{I}_\varrho} (ev_{i_1})^*(\delta_{P_{k_1}}) \cdots (ev_{i_d})^*(\delta_{P_{k_d}})
= (-1)^{\chi(\wrtr)+1} \int_{C_\varrho} \delta_{P_{k_1}} \wedge \cdots \wedge \delta_{P_{k_d}}.
\end{align*}
Again using the generic assumption on the points $P_1, \dots, P_n$, we get $(P_{k_1},\dots,P_{k_d}) \in \text{Int}(C_\varrho)$ and therefore $\lim_{\hp \rightarrow 0}\int_{C_\varrho} \delta_{P_{k_1}} \wedge \cdots \wedge \delta_{P_{k_d}} =1$.
\end{proof}

For a weighted $d$-pointed $k$-tree $\wptr$ with $MI(\wptr) = 0,2$ and $Q_{\wptr}\neq \emptyset$ (notice that the definition of the polyhedral subset $Q_\wptr$ does not depend on the ribbon structure), since the monomial weights $u_{k_j}$'s at the marked points $p_{i_j}$'s are all distinct, there are exactly $2^{d-1}$ ribbon structures (up to isomorphisms) on $\wptr$. Notice that $\mathfrak{l}_\wrtr(\incoming,\dots,\incoming)$ does not depend on the ribbon structure as well because $\incoming \in \mathcal{H}^{2,2}_n$ and $\incoming$ commute with even elements in $\mathcal{H}^{*,*}_n$ (one can also see from Lemmas \ref{lem:tree_support_lemma} and \ref{lem:explicit_contribution} that the terms $(-1)^{\chi(\wrtr)}$, which depend on the ribbon structure, indeed cancel with each other).

Therefore for each weighted $d$-pointed $k$-tree $\wptr$, we can fix an arbitrary ribbon tree $\wrtr$ whose underlying tree $\underline{\wrtr}$ is $\wptr$, and write
$\mathfrak{l}_{k,d}(\incoming,\dots,\incoming) := \sum_{\wptr \in \wptree{k}{d}} \mathfrak{l}_{\wrtr}(\incoming,\dots,\incoming).$
By setting $\alpha_\wptr := (-1)^{\chi(\wrtr)} \alpha_\wrtr $ and combining Lemmas \ref{lem:tree_support_lemma} and \ref{lem:explicit_contribution}, we obtain our main theorem:
\begin{theorem}\label{thm:main_theorem}
The Maurer-Cartan solution $\Phi \in \mathcal{H}^{*,*}_n$ constructed in \eqref{eqn:solve_sum_over_trees} is of the form
$$\Phi = \incoming + \varXi^{0,0} + \varXi^{1,1},$$
with $\varXi^{i,i} \in \mathcal{H}^{i,i}_n$ for $i = 0, 1, 2$, and both correction terms $\varXi^{0,0}$ and $\varXi^{1,1}$ can be expressed as a sum over tropical disks:
\begin{align*}
\varXi^{0,0} & = \sum_{k,d} \sum_{\substack{\wptr \in \wptree{k}{d},\ MI(\wptr)=2\\ \overline{\mathfrak{M}}_d^{\wptr} (\mathcal{P},\Sigma,P_1,\dots,P_n) \neq \emptyset}} \alpha_{\wptr} \text{Mult}(\wptr) z^{m_\wptr} u_{\wptr},\\
\varXi^{1,1} & = \sum_{k,d} \sum_{\substack{\wptr \in \wptree{k}{d},\ MI(\wptr)=0\\ \overline{\mathfrak{M}}_d^{\wptr} (\mathcal{P},\Sigma,P_1,\dots,P_n) \neq \emptyset}} \alpha_{\wptr} k_{\wptr} \text{Mult}(\wptr)z^{m_\wptr} \check{\partial}_{n_{Q_\wptr}} u_{\wptr},
\end{align*}
where $\wptree{k}{d}$ is the set of isomorphism classes of weighted $d$-pointed $k$-trees introduced in Definition \ref{def:weighted_tree}.
Furthermore, in the above expressions we have
$\alpha_\wptr \in \mathcal{W}^{s_\wptr}_{Q_\wptr},$
where $Q_\wptr = ev_o(\overline{\mathfrak{M}}_d^{\wptr} (\mathcal{P},\Sigma,P_1,\dots,P_n))$ is of codimension $s_\wptr := 1-\frac{MI(\wptr)}{2}$ in $M_\real$, and
\begin{equation*}
\begin{array}{ll}
\displaystyle\lim_{\hp \rightarrow 0} \alpha_{\wptr}|_{x} =1 & \text{for any $x \in \text{Int}(Q_\wptr)$ when $MI(\wptr) = 2$},\\
\displaystyle\lim_{\hp \rightarrow 0} \int_{\varrho} \alpha_{\wptr} = - 1 & \text{for any $\varrho \pitchfork \text{Int}_{\text{re}}(Q_\wptr)$ positively when $MI(\wptr) = 0$}.\\
\end{array}
\end{equation*}
\end{theorem}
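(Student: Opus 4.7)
The plan is to assemble Theorem \ref{thm:main_theorem} directly from the lemmas already established in this section. First I would start with the sum-over-trees formula \eqref{eqn:solve_sum_over_trees}, $\Phi = \incoming + \sum_{k,d \geq 1} \mathfrak{l}_{k,d}(\incoming, \ldots, \incoming)$ with $\mathfrak{l}_{k,d} = \frac{1}{2^{d-1}} \sum_{\wrtr \in \wrtree{k}{d}} \mathfrak{l}_\wrtr$. By Lemma \ref{lem:tree_support_lemma}, every summand $\mathfrak{l}_\wrtr(\incoming, \ldots, \incoming)$ vanishes except when $MI(\wrtr) \in \{0, 2\}$, $\text{Mult}(\wrtr) \neq 0$, and $Q_\wrtr \neq \emptyset$; the nonvanishing contributions live in $\mathcal{H}^{0,0}_n$ when $MI(\wrtr) = 2$ and in $\mathcal{H}^{1,1}_n$ when $MI(\wrtr) = 0$, giving at once the decomposition $\Phi - \incoming = \varXi^{0,0} + \varXi^{1,1}$ with the claimed bidegrees.

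Next I would convert the sum over ribbon trees into a sum over weighted trees. For a fixed $\wptr \in \wptree{k}{d}$ with pairwise distinct marked-point monomials, the underlying tree admits exactly $2^{d-1}$ non-isomorphic ribbon refinements (a binary choice at each internal vertex adjacent to a marked-point edge), and each yields the same element $\mathfrak{l}_\wrtr(\incoming, \ldots, \incoming)$: the even total degree of $\incoming \in \mathcal{H}^{2,2}_n$ makes the graded-symmetric Lie bracket insensitive to reordering at internal vertices, and although $(-1)^{\chi(\wrtr)}$ and $\alpha_\wrtr$ each depend on the ribbon choice, their product $\alpha_\wptr := (-1)^{\chi(\wrtr)}\alpha_\wrtr$ does not, because the orientation flip that toggles $\chi(\wrtr)$ is exactly the flip that reverses $\nu_\wrtr$ in the iterated-integral expression of Lemma \ref{lem:iterated_integral} and is compatible with the orientation comparison in Lemma \ref{lem:disk_moduli_orientation_comparsion}. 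The normalization $\frac{1}{2^{d-1}}$ therefore cancels the $2^{d-1}$ refinements, producing the asserted sums over $\wptree{k}{d}$ for $\varXi^{0,0}$ and $\varXi^{1,1}$.

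Finally, the statement $\alpha_\wptr \in \mathcal{W}^{s_\wptr}_{Q_\wptr}$ with $Q_\wptr = ev_o(\overline{\mathfrak{M}}_d^\wptr(\mathcal{P},\Sigma,P_1,\ldots,P_n))$ of codimension $s_\wptr = 1 - MI(\wptr)/2$ is read off from Lemmas \ref{lem:tree_support_lemma} and \ref{lem:asy_support_combinatorics}, and the two limit formulas $\lim_{\hp \to 0} \alpha_\wptr|_x = 1$ for $MI(\wptr) = 2$ and $\lim_{\hp \to 0} \int_\varrho \alpha_\wptr = -1$ for $MI(\wptr) = 0$ follow immediately from Lemma \ref{lem:explicit_contribution}, once the sign $(-1)^{\chi(\wrtr)}$ has been absorbed into $\alpha_\wptr$. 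The main subtlety of the proof --- the consistent bookkeeping of orientations so that the ribbon-dependent signs cancel to give a manifestly ribbon-independent answer --- has already been dealt with in Lemma \ref{lem:disk_moduli_orientation_comparsion}, so the assembly at this final stage is essentially formal.
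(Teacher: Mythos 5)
Your proposal is correct and follows the paper's own argument: read off the vanishing conditions, bidegrees, and asymptotic support from Lemmas \ref{lem:tree_support_lemma} and \ref{lem:asy_support_combinatorics}, collapse the $\frac{1}{2^{d-1}}$-weighted ribbon sum onto a sum over $\wptree{k}{d}$ using the ribbon-independence of $\mathfrak{l}_\wrtr(\incoming,\dots,\incoming)$ together with the $2^{d-1}$-count, set $\alpha_\wptr := (-1)^{\chi(\wrtr)}\alpha_\wrtr$, and invoke Lemma \ref{lem:explicit_contribution} for the $\hp \to 0$ limits. The additional detail you give on why $(-1)^{\chi(\wrtr)}\alpha_\wrtr$ is ribbon-independent (matching the orientation flip in $\chi(\wrtr)$ against the reversal of $\nu_\wrtr$ via Lemmas \ref{lem:disk_moduli_orientation_comparsion} and \ref{lem:iterated_integral}) is a valid elaboration of the parenthetical remark the paper makes at this point.
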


\begin{example}
\begin{figure}[h]
\centering
\includegraphics[scale=0.55]{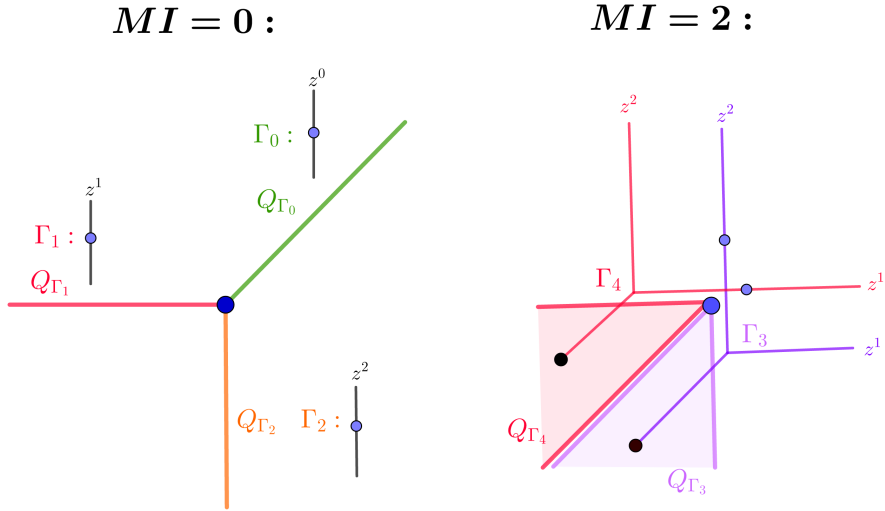}
\caption{Tropical disks and their moduli spaces for $n = 1$}
\label{fig:one_pointed_moduli}
\end{figure}	
We give an example of the locus $Q_{\wptr}$ traced out by weighted $1$-pointed $k$-trees $\wptr$ in the case $n=1$, i.e. when there is only $1$ marked point.  For a tree $\wptr$ with $MI(\wptr) = 0$, the only possibility is that $k=1$ and there are precisely 3 such trees $\wptr_0, \wptr_1,\wptr_2$ as shown in Figure \ref{fig:one_pointed_moduli} together with the corresponding $1$-dimensional loci $Q_{\wptr_0}, Q_{\wptr_1}, Q_{\wptr_2}$.
For the case $MI(\wptr) = 2$, we have $k=2$, and there are 6 such trees. Two of them, which we call $\wptr_3$ and $\wptr_4$, with the same attached monomial $\text{Mono}(\wptr) = z^1 z^2$, are shown in Figure \ref{fig:one_pointed_moduli}. Note that the boundary between $Q_{\wptr_3}$ and $Q_{\wptr_4}$ is not a wall in $\mathscr{D}$ although the moduli space jumps across it. This is because the attached monomial $\text{Mono}(\wptr)$ does not jump across the boundary, and this agrees with the fact that $\Phi$ is simply a holomorphic function outside the support $\text{Supp}(\mathscr{D})$.

\end{example}

\bibliographystyle{amsplain}
\bibliography{geometry}

\end{document}